\documentclass[a4paper]{article}

\usepackage{a4wide}
\usepackage{amsmath,amssymb,latexsym,cite,amsthm,hyperref, multicol, bussproofs}
\usepackage{type1cm}      	
\usepackage{microtype}
\usepackage{newtxtext}       	%
\usepackage{newtxmath}       	
\usepackage[english]{babel}



\newcommand\bcmdtab{\noindent\bgroup\tabcolsep=0pt%
  \begin{tabular}{@{}p{10pc}@{}p{20pc}@{}}}
\newcommand\ecmdtab{\end{tabular}\egroup}

 \newtheorem{theorem}{Theorem}[section]
 \newtheorem{proposition}[theorem]{Proposition}
 \newtheorem{lemma}[theorem]{Lemma}

\theoremstyle{definition}
 \newtheorem{definition}[theorem]{Definition}
 \newtheorem{remark}[theorem]{Remark}

 \newtheorem{example}[theorem]{Example}


\usepackage{xcolor}
\colorlet{darkmagenta}{magenta!85!black}
\colorlet{darkgreen}{green!55!black}
\colorlet{darkblue}{blue!85!black}
\colorlet{darkred}{red!80!black}





\newcommand{\condition}[1]{{\rm\textsc{#1}}}
\newcommand{\REFL}{\condition{Reflexivity}}
\newcommand{\GENREFL}{\condition{Generalized\ Reflexivity}}
\newcommand{\CUT}{\condition{Cut}}
\newcommand{\RELCUT}{\condition{Relevant\ Cut}}

\newcommand{\CUMCUT}{\condition{Cumulative\ Cut}}
\newcommand{\MONO}{\condition{Monotonicity}}
\newcommand{\MULTICUT}{\condition{Multi-Cut}}
\newcommand{\CONT}{\condition{Contraction}}
\newcommand{\FINI}{\condition{Finitarity}}
\newcommand{\THMREM}{\condition{Theorem\ Removal}}
\newcommand{\TRAN}{\condition{Transitivity}}
\newcommand{\TRASH}{\condition{Theorem Reflexivity}}
\newcommand{\COMP}{\condition{Compatibility}}

\newcommand{\node}[1]{\ensuremath{{\boldsymbol{#1}}}} 

\DeclareMathOperator{\MPow}{\mathcal{P}^{\#}}
\DeclareMathOperator{\FMPow}{\mathcal{P}_{\!\mathit{fin}}^\#}

\newcommand{\Msetminus}{\ensuremath{\mathbin{\dot-}}}

\usepackage{tikz-cd}
\usepackage[shortcuts]{extdash}

\newcommand{\alg}[1]{{\ensuremath{\boldsymbol {\mathit{#1}}}}}

{\rm \begin{trivlist}\item\begin{tabular}{l@{\quad}l@{\qquad}l}}
    {\end{tabular}\end{trivlist}}

\newcommand{\tuple}[1]{\ensuremath{\langle{#1}\rangle}}  
\newcommand{\vectn}[1]{\ensuremath{#1_1,\ldots,#1_n}}    
\newcommand{\vectk}[1]{\ensuremath{#1_1,\ldots,#1_k}}    

\newcommand{\class}[2]{\{#1\mid #2\}}
\newcommand{\fclass}[1]{\{#1\}}

\newcommand{\numeral}[1]{\ensuremath{\bar{#1}}}
\newcommand{\Numeral}[1]{\ensuremath{\overline{#1}}} 
\newcommand{\1}{\numeral{1}}
\newcommand{\0}{\numeral{0}}

\newcommand{\form}{\mathit{Fm}}                   

\newcommand{\f}{\ensuremath{\varphi}}
\newcommand{\p}{\ensuremath{\psi}}
\newcommand{\x}{\ensuremath{\chi}}
\newcommand{\AS}{\ensuremath{\mathcal{AS}}}

\newcommand{\hairspace}{\ifmmode\mskip1mu\else\kern0.08em\fi}
\newcommand{\emptymultiset}{\ensuremath{[\hairspace]}}

\newcommand{\logic}[1]{\ensuremath{\mathbf{#1}}}
\newcommand{\R}{\logic R}
\newcommand{\E}{\logic E}
\newcommand{\T}{\logic T}
\newcommand{\RM}{\logic{RM}}
\newcommand{\CL}{\logic{CL}}
\newcommand{\BCI}{\logic{BCI}}

\let\models\vDash

\let\leq\leqslant
\let\geq\geqslant

\makeatletter
\newcommand{\bigplus}{%
  \DOTSB\mathop{\mathpalette\mattos@bigplus\relax}\slimits@
}
\newcommand\mattos@bigplus[2]{%
  \vcenter{\hbox{%
    \sbox\z@{$#1\sum$}%
    \resizebox{!}{0.9\dimexpr\ht\z@+\dp\z@}{\raisebox{\depth}{$\m@th#1+$}}%
  }}%
  \vphantom{\sum}%
}
\makeatother

\begin{document}

\begin{center}

{\Large \bf Relevant Consequence Relations: An Invitation }

\

            GUILLERMO BADIA\\
            University of Queensland\\
            \url{https://orcid.org/0000-0002-5597-6794}\\[1ex]
            and\\[1ex]
            LIBOR B{\v{E}}HOUNEK\\
            University of Ostrava\\
            \url{https://orcid.org/0000-0001-8570-9657}\\[1ex]
            and \\[1ex]
            PETR CINTULA\\
            Institute of Computer Science of the Czech Academy of Sciences\\
            \url{https://orcid.org/0000-0002-3617-1392}\\[1ex]
            and \\[1ex]
            ANDREW TEDDER\\
            Ruhr University Bochum\\
            \url{https://orcid.org/0000-0002-2303-003X}

\end{center}

\paragraph*{Abstract.}
{\small We generalize the notion of \emph{consequence relation} standard in abstract treatments of logic to accommodate intuitions of \emph{relevance}. The guiding idea follows the \emph{use criterion}, according to which in order for some premises to have some conclusion(s) as consequence(s), the premises must each be \emph{used} in some way to obtain the conclusion(s). This relevance intuition turns out to require not just a failure of monotonicity, but also a move to considering consequence relations as obtaining between \emph{multisets}. We motivate and state basic definitions of relevant consequence relations, both in single conclusion (asymmetric) and multiple conclusion (symmetric) settings, as well as derivations and theories, guided by the use intuitions, and prove a number of results indicating that the definitions capture the desired results (at least in many cases).}

\paragraph*{Keywords:}
relevant entailment, substructural logics, multiset consequence relations, multiple conclusions

\paragraph*{MSC~2020:}
Primary: 03B22 \;\;Secondary: 03A05, 03B47

\section{Introduction}
\label{sect:Intro}

Logical consequence is the central notion of the logical enterprise. Meta-mathematical studies of consequence relations start with the works of Alfred Tarski \cite{tarski1936} and are part of a long and distinguished tradition ever since 
(e.g., \cite{Rasiowa,BP86,DunnH}). The standard mathematical rendering of the notion of consequence, however, involves many structural presuppositions that make it unsuitable for a natural representation of some well known non-classical logics, such as substructural logics (e.g., \cite{SHD,restall2000,GJKO}). The failure of the Tarskian approach to capture the so-called `internal consequence relation' of substructural logics is a well known fact (see, e.g., \cite[p.~78]{GJKO}) that provides reasons to investigate alternate definitions of consequence.

Prominent among substructural logics are \emph{relevant} logics (e.g., \cite{AB, AndersonBelnapDunn1992, Routleyetal1982}), which can be broadly characterised as those which seek to enforce a relevance constraint on logical consequence, according to which in order for some formula(s) to be entailed by some other formula(s) (or to be ``a logical consequence of'' them), the entailed must be relevant to the entailing. There have been a handful of proposed formalizations of the relevance constraint; perhaps the most famous is the \emph{Variable Sharing} criterion, which requires that in order for an entailment claim to be logically correct, the premises and the conclusion must have some propositional variable in common (see \cite[\S22.1.3]{AB}). Another approach, not quite as venerable, though older, is embodied in the so-called Use Criterion:

\begin{quote}
\emph{Use Criterion}:
A formula $\f$ is relevantly deducible from a collection of formulas $\Gamma$ just in case there is a derivation of $\f$ from $\Gamma$ in which each $\gamma\in\Gamma$ is \emph{used}.
\end{quote}

This statement is adapted from \cite{Meyer74}, but a version is already mentioned, in passing, in Church's early work \cite[Fact 20]{Church51} and was further developed in the Fitch-style natural deduction systems presented in \cite{AB}. The idea here is, in certain respects, a bit simpler than the variable sharing property and is, in addition, more generalizable. Given \emph{any} system of derivation for any logic, even one which violates the variable sharing criterion, we can consider a definition of logical consequence which obeys the use criterion and is, in that sense and to that extent, relevant. It is relevant consequence relations of this kind that we shall investigate in this paper, in a more abstract fashion than has been done before. Usually in relevant logic circles, logics are understood as sets of formulas, and logical consequence has been taken to be expressed by the theoremhood of implication formulas (as in the ``internal'' consequence relations mentioned above). While metatheoretical characterisations of  so-called \emph{external} relevant consequence relations have come up for discussion (e.g., \cite{Meyerdiss}, \cite[\S22.2, \S23.6]{AB}, \cite{DunnH}), this work hasn't been done in the kind of abstractness and generality that work concerning Tarskian consequence relations has been. Our aim in this paper is to begin to fill the gap. 

One obvious result of the use-criterion approach to defining relevant consequence relations is that we don't have any guarantee that the resulting relations will be \emph{monotone}. That is, while we may have that $\Gamma$ entails~$\alpha$, because there is a derivation of $\alpha$ in which each formula in~$\Gamma$ is used, we may not have such a derivation for any arbitrary $\Gamma'$ containing~$\Gamma$. Such failures of monotonicity indicate the key respect in which relevant consequence relations, in our sense, differ from Tarskian relations, and provide some mathematical motivation for investigating such relations. 

The failure of monotonicity is related to failures of various \emph{weakening} axioms and rules in standard relevant logics. In the internal consequence relations of relevant logics, one has a choice of how to combine premises: basically, the choice is between using lattice conjunction $\land$ or fusion (``multiplicative conjunction'')~$\circ$. If we use the former to combine premises, then, in general, we don't have a deduction theorem, as $\rightarrow$ is not assumed to residuate $\land$ (indeed, $\f\land\p\rightarrow\chi$ does not imply $\f\rightarrow(\p\rightarrow\chi)$), whereas using the latter we retain residuation while losing monotonicity (as $(\f\circ\p)\rightarrow\f$ is not generally valid). Our approach is closer in spirit to the latter (though with some complications), hence the failure of monotonicity.

It should be noted that mononticity has been questioned on other grounds, perhaps most famously in the context of \emph{non-monotonic} logics, as studied by Makinson \cite{MakinsonD} and others. These systems, and their motivations, are rather different from relevant logics -- having to do with forms of \emph{defeasible} reasoning, and not involving a \emph{revisionist} critique of Tarskian consequence, as in relevant logic -- so we'll not dwell on these systems beyond briefly mentioning them to note the salient differences. 

Since the multiplicative conjunction of relevant logics is non-idempotent (in particular, we usually want to avoid having $(\f\circ\f)\rightarrow\f$), we will generally need to work with \emph{multisets} of premises -- i.e., collections in which each element can occur multiple times -- rather than ordinary sets. Even if the use of multisets can be viewed as a mere artifact of our approach to relevant consequence, multiset consequence relations are of independent interest and possess a rich mathematical theory. Monotonic multiset consequence relations have already been studied in an abstract setting~\cite{cintula2019}; in this paper, we will limit our investigation of (possibly non-monotonic) multiset consequence relations to the relevant ones, and will leave a more general study for future work.

Finally, we'll consider relevant consequence relations both with single and multiple conclusions. In the single conclusion setting, things are simpler and we obtain a collection of results characterising our target relations which flesh out the way our approach captures aspects of the intended interpretation. In the multiple conclusion settings, things get a bit more complex, and our approach is a little different than usual. A consecution in a multiple-conclusion consequence relation, $\Gamma \vdash \Delta$, is  typically taken to imply that from all the elements of~$\Gamma$, at least one of the elements of~$\Delta$ (or a disjunction of finitely many of them) follows. This might be called the \emph{disjunctive reading} of a multiple-conclusion consequence relation, and constitutes the dominant approach in the literature. There is, however, another, and perhaps more straightforward, reading which we call the \emph{conjunctive reading:} from all the elements of $\Gamma$, all the elements of $\Delta$ follow. Clearly, in logical settings with the contraction rule (where $\Gamma$ and $\Delta$ are sets), this can be reduced to a series of elementary claims $\Gamma\vdash \delta$ for each $\delta \in \Delta$; this may explain why disjunctive readings are more prevalent in the literature. Nevertheless, the conjunctive reading has also been used in classical settings, e.g., in the abstract category-theoretical study of consequence \cite{GT}; and perhaps surprisingly, the conjunctive reading can be traced back to the very origins of modern mathematical logic. In fact, as early as in the first half of 1800's, Bernard Bolzano \cite[p.~54]{bolzano2004} introduced a notion of multiple-conclusion consequence with the conjunctive reading:\looseness-1

\begin{quote}
One especially noteworthy case occurs, however, if not just some, but all of the ideas that, when substituted for $i, j, \dots$ in $A, B, C, \dots$ make all these true, also make all of $M, N, O, \dots$ true [$\dots$] with respect to the variable parts $i, j, \dots$
\end{quote}

For reasons we'll discuss later, the conjunctive reading is a nice fit for the study of multiple-conclusion relevant consequence relations, so that will be our approach here.\footnote{
    This can be contrasted with a different approach, which takes the disjunctive reading but uses \emph{multiplicative} disjunction, or \emph{fission}, as is available, for instance, in the relevant logic \textbf{R}. We'll come back to this point later; but see \cite[\S6.15]{DunnH} for some further discussion.}
For the sake of simplicity, we'll restrict our attention to \emph{finitary} consequence relations throughout this paper. A generalization to infinite multisets of premises or conclusions turns out to present certain technical challenges that we prefer to avoid in this initial exploration and leave them for future research.

Our work builds on previous work by Avron \cite{Avron, Av92}, who has investigated a range of multiset consequence relations, including non-monotonic ones, especially in the case of linear logic \cite{avron1988}. In our work, we focus particularly on relevance (via the use criterion), employ a conjunctive reading of the conclusions in the multi-conclusion setting, and abstract away from the behaviour of particular kinds of connectives. Nonetheless, our debt to Arnon's work in this area should be noted.

This article will be arranged as follows: \S2 will go into detail about why we employ multisets, and why doing so is called for by considerations of relevance in particular, by considering some untoward consequences of the standard, Tarskian, account of logical consequence. In \S3, we shall introduce the key definitions for single-conclusion consequence: namely that of a (relevant) tree proof in an axiomatically defined logic and, from there, the abstract definition of single-conclusion consequence (a form of non-monotonic consequence). Furthermore, we'll prove some results indicating that the abstract notion nicely captures the important features of our concrete starting point. \S4 considers some examples of the system in practice, particularly to some extensions of the substructural implication logic \BCI, including some relevant logics (and indicates some limitations of the approach for capturing relevant logics in languages including their \emph{extensional} vocabulary). In \S5 we extend the framework of \S3 to accommodate multiple-conclusion consequence relations, proving some further results indicating that our proposed definitions work as intended. Finally, in \S6, we consider in some detail the question of carrying out derivations in the multiple-conclusion case, and, in \S7, propose a way to define \emph{theory}, adapting the standard definition to our setting, which does some of the work one expects of that concept.

\section{Tarskian Consequence Relations; from Sets to Multisets}
\label{sect:Tarskian}

We'll recall the standard, Tarskian, definition of consequence relation, and consider reasons to alter certain key features of it. Let us fix a set $\form$ of formulas. Unless said otherwise, we do not assume that formulas have any internal structure.\footnote{Thus, even though we only give propositional examples throughout this paper, the definitions and results apply equally well to (relevant-flavoured) predicate or modal substructural logics.}

\begin{definition}[Finitary Tarskian Consequence Relations]\label{def:Tarski}
A \emph{finitary Tarskian consequence relation} is a relation $\vdash$ between \emph{finite} sets of elements of $\form$ and elements of $\form$ satisfying the following constraints: 

\begin{itemize}
\itemsep=0em
    \item $\f\vdash\f$ \hfill \REFL 
    \item If $\Gamma,\p\vdash\f$ and $\Delta\vdash\p$ then $\Gamma,\Delta\vdash\f$ \hfill \CUT 
    \item If $\Gamma\vdash\f$ then $\Gamma,\p\vdash\f$ \hfill \MONO 
\end{itemize}



\end{definition}

\begin{remark}\label{r:TarskiCanBeRelevant}
Traditionally, finitary Tarskian consequence relations $\vdash$ are defined as relations between \emph{arbitrary sets} of formulas and formulas obeying the variants of \REFL, \CUT\ and \MONO\ (for arbitrary sets~$\Gamma$ and~$\Delta$) plus the condition: 
\begin{itemize}
\item If\/ $\Gamma\vdash \f$, then there is finite $\Delta\subseteq \Gamma$ such that $\Delta\vdash \f$. \hfill \FINI
\end{itemize}
It is easy to see that (due to  \MONO)
there is one-one correspondence between these two definitions. 


\pagebreak
 
The use of infinite sets of premises has numerous technical advantages in the Tarskian setting which as we will see later, unfortunately, mostly disappear in the relevant and multiset setting. For reasons of simplicity and naturalness we'll restrict our attention to finite collections of premises throughout the paper. On the mathematical side, this assumption makes our definitions simpler and allows us to avoid some difficulties.
\end{remark}

We'll be concerned throughout with axiomatic presentations of logics, and we'll wind up constructing them out of \emph{consecutions}, but before doing so in formal detail, let's just consider an informal example to note some properties of axiomatic derivations in a tree form. Consider the following proof in classical logic (\CL) of the fact that $\f\rightarrow({\p\rightarrow\x}),{\f\land\p}\vdash_{\CL}\x$, using a tacit, but fairly standard axiomatic presentation of classical (or indeed intuitionistic) logic in the $\{\land,\rightarrow\}$-fragment:

\begin{center}
\AxiomC{$\f\land\p$}
\AxiomC{$\f\land\p\rightarrow\f$}
\BinaryInfC{$\f$}
\AxiomC{$\f\rightarrow(\p\rightarrow\x)$}
\BinaryInfC{$\p\rightarrow\x$}
\AxiomC{$\f\land\p$}
\AxiomC{$\f\land\p\rightarrow\p$}
\BinaryInfC{$\p$}
\BinaryInfC{$\x$}
\DisplayProof
\end{center}

\noindent
Notice that the structure relies on using the deduction rule of \emph{modus ponens}, and that the premise $\f\land\p$ is used twice. In a standard axiomatic presentation (in the \textsc{Fmla} framework, to use Humberstone's terminology \cite{Humberstone11}), axioms are taken to be formulas and deduction rules to operate thereon. We'll employ a slightly different presentation by dealing with \emph{consecutions}: these are objects of the form $\Gamma\rhd\f$ where $\Gamma$ is a finite set of formulas and $\f$ a formula.
\emph{Axioms,} then, are consecutions of the form $\varnothing\rhd\f$, and all the other consecutions are \emph{deduction rules:} for instance, we would have the axiom $\varnothing\rhd\f\land\p\rightarrow\f$ and the deduction rule $\f\rightarrow\p,\f\rhd\p$ in a consecution-version of the above proof tree. 

\begin{remark}

We take proofs to be finite trees rather than finite sequences. The latter choice is more standard, and the choice is somewhat arbitrary. Having said that, we'll proceed with tree proofs as they simplify some parts of the presentation once we start incorporating relevance considerations.

\end{remark}

\begin{definition}[Tree-proof]\label{d:tree-proof-sets}
Let $\AS$ be an axiomatic systems, i.e., a collection of axioms and deduction rules. In what follows, abusing notation, we identify any axiom of the form $\emptyset\rhd\varphi$ with the formula $\varphi$. A \emph{proof of a formula $\f$ from a set of premises $\Gamma$} in $\AS$ is a finite tree labeled by formulas such that:
\begin{enumerate}
\item The root is labelled by $\f$.
\item A formula labeling a leaf is an axiom of~\AS\ or an element of $\Gamma$.
\item For each non-leaf node $\node n$, \AS\ contains a rule $\Delta \rhd \p$ such that $\p$ is the label of $\node n$ and $\Delta$ is the set of formulas labeling the immediate predecessors of $\node n$.
\end{enumerate}
If there is a \emph{tree-proof} of a formula $\varphi$ in $\AS$ from a set of premises $\Gamma$, we write $\Gamma\vdash_{\!\AS} \f$.
\end{definition}

This is a standard definition, but in adapting it too swiftly to a non-classical setting, we accidentally introduce some problems; of particular focus here are problems of \emph{relevance} and problems concerning \emph{multiple uses} of premises. For the first, clearly any Tarskian consequence relation will allow for valid arguments in which some premises are irrelevant, and not needed: for instance, if we have $\varnothing\rhd\f\rightarrow\f$ among the axioms of $\AS$, then we'll have $\p\vdash_{\AS}\f\rightarrow\f$, despite the fact that $\p,\f$ may have nothing to do with one another. Stated in terms of the use criterion, there is, at best, only a rather attenuated sense in which $\p$ can be said to be \emph{used} in a derivation of an irrelevant $\f\rightarrow\f$.

As mentioned in the introduction, relevant logics have usually been studied either as sets of formulas rather than as Tarskian consequence relations, or the Tarskian consequence relation has not been understood to really express the relation of relevant entailment, which is rather expressed by an implication connective~$\rightarrow$. These amount to a similar point, which is that relevant logics have usually been studied in terms of \emph{internal} consequence relations, rather than external ones: that is, they are studied in terms of that relation which obtains between a collection of formulas $\Gamma$ and a formula $\f$ when a formula $\gamma\rightarrow\f$ is valid, where $\gamma$ is a complex formula combining the elements of $\Gamma$ with some kind of conjunction, usually either the lattice conjunction $\land$ or an intensional conjunction $\circ$, commonly called ``fusion''. Both of these choices bring along certain difficulties, and if we use the latter, $\circ$, then, since there are reasons of relevance for rejecting $\f\circ\f\rightarrow\f$, see \cite[\S 29.5]{AB}, we cannot take the internal consequence relation to hold between \emph{sets} of formulas and formulas. In particular, we may not always \emph{add} new copies of formulas and expect to preserve validity. This suggests that an approach employing \emph{multisets}, like that taken in \cite{Avron, MMR, MMR2}, is better suited to the task at hand.\footnote{It should be noted that already in his dissertation, Meyer noted that set consequence was inadequate for relevance purposes for precisely the kind of reasons we have mentioned: see \cite[p. 113]{Meyerdiss}.}

The standard relevant logics, like \textbf{R} and its neighbours, are somewhat complex -- for instance, they do not have finite characteristic matrix presentations -- so let's consider a logic with a simpler presentation which nicely exemplifies some of these difficulties: \emph{Abelian logic}. 

\begin{example} \label{abelset}
We present Abelian logic in the language consisting of binary connectives~$\wedge$, $\vee$, and~$\circ$, a unary connective~$\neg$, and  truth constants~$\0$ and $\1$. Its semantics is given by the algebra $\alg{Z} = \langle \mathbb{Z}, \min,\max,  +, -, 0, 1 \rangle$, which is in fact a characteristic algebra for Abelian logic. We also consider a defined binary connective $\to$, where $\f\to\p$ is defined as $\neg(\f\circ \neg \p)$, and constants for the integers, namely, \Numeral{n+1} defined as $\numeral n\circ\1$ and $\Numeral{-n}$ as $\neg\numeral n$, for all $n\ge 1$.\footnote{For the peculiar properties of Abelian logic (e.g., that intensional conjunction and disjunction coincide) the reader can check \cite{MOG} where one can also find other variants of Abelian logic.}

If we take the usual order $\leq$ on $\alg Z$, then there are three natural things we might try to define a consequence relation; for any formulas $\vectn\f,\p$, let us define:\footnote{We use $\Rightarrow$ as a metalanguage, classical implication and abuse $\f_i,\p$ to mean the evaluation of the formulas in \alg Z on the right-hand side of the symbol~$\models$.}
\begin{alignat*}{3}
\vectn\f &\vdash_\alg{Z}^{p} \p  &\qquad\text{iff} \qquad 	 \alg{Z}&\models 0\leq \min(\vectn\f) \Longrightarrow 0\leq \p
\\ 
\vectn\f &\vdash_\alg{Z}^{\leq} \p  &\qquad\text{iff} \qquad 	 \alg  Z&\models\min(\vectn\f) \leq \p
\\ 
\vectn\f &\vdash' \p  &\qquad\text{iff} \qquad 	  \alg  Z&\models \f_1 + \f_2 + \dots + \f_n \leq \p
\end{alignat*}

\noindent
Let's consider these in turn. The relation $\vdash^{p}_{\alg{Z}}$ is the most natural \emph{external} consequence relation, as we are concerned with preservation of \emph{designated} values (namely, the values $\geq 0$). Indeed, this is a (finitary) Tarskian consequence relation (in particular, it's obviously monotone). However, it does not satisfy the deduction theorem: note that $0\leq \min(1,2)$ implies $0\leq 1$, and yet $0\leq 1$ and $0\nleq 2\rightarrow 1$. A natural way to try to buy this back is to internalise just a bit more, as in $\vdash^\leq_{\alg{Z}}$. This is, again, a (finitary) Tarskian consequence relation, but it also fails to satisfy the deduction theorem (the same choice of values as before suffices to show this). In addition, this also fails to satisfy the deduction rule of modus ponens, as $\min(1\to-2,1)\nleq -2$.

In light of this, the most obvious way to go \emph{more} internal is to use $+$ rather than $\min$ (i.e., $\circ$ rather than~$\land$) to bunch premises, resulting in $\vdash'$. Unfortunately, this definition, which is apparently the intended one, fails to be \emph{well-defined} for sets of premises. By the definition, we have that $1\leq 1$ iff $ \{\overline{1}\}\vdash'\overline{1}$ iff $ \{\overline{1},\overline{1}\}\vdash'\overline{1}$ iff $2\leq 1$. By using \emph{sets} of premises, we have clearly distorted the intended meaning, and produced an obviously bad result here: the definition we want calls out to be given in a multiset framework.



\end{example}

These issues, caused by disregarding the number of occurrences of a formula, motivate a move to multiset consequence.\footnote{For the sake of simplicity, we shall just consider multisets here, though we note that working with sequences or even structures (in the sense of \cite{restall2000}) is also a natural choice. Furthermore, as we'll see in the second half of Section \ref{sect:BCI-Example} and Proposition~\ref{p:onT},
a move in the direction of even more discerning data types is suggested by some limitations of our proposal.} Let us recall now some basic notions.

\begin{definition}
\label{def:multiset}
A \emph{multiset} over a set~$A$ is a mapping $M\colon A\to\mathbb N$.
The natural number $M(a)$, for $a\in A$, is called the \emph{multiplicity} of the element~$a$ in the multiset~$M$.
The set $|M|=\class{a\in A}{M(a)>0}$ is called the \emph{support} of~$M$.

We say that a multiset is \emph{finite} if it has a finite support.\footnote{As mentioned earlier, we'll be concerned throughout with finitary consequence, so our concern will, for the most part, be with finite multisets.}
We use the usual square bracket notation $M=[\vectn a]$ for finite multisets, where $M(a)$ is the number of occurrences of~$a$ in the list \vectn a, provided the domain~$A\supseteq\fclass{\vectn a}$ of~$M$ is clear from the context or does not need to be specified.

We denote the set of all multisets over~$A$ by $\MPow(A)$ and the set of all finite multisets over~$A$ by $\FMPow(A)$.

A set~$B\subseteq A$ is identified with the multiset~$M_B\colon A\to\mathbb N$ such that
$M_B(a)=1$ if $a\in B$ and $M_B(a)=0$ if $a\notin B$
(i.e., $M_B$ is the characteristic function of~$B$ on~$A$). The \emph{empty multiset} will be written $\emptymultiset$.

A multiset~$M\in\MPow(A)$ is a \emph{submultiset} of a multiset~$N\in\MPow(A)$, written $M\le N$, if $M(a)\le N(a)$ for all $a\in A$.
The pointwise operations $\cup,\cap,\uplus,\Msetminus$ on multisets over~$A$ are defined as follows, for all $a\in A$:
\begin{align*}
    (M_1\cap M_2)(a)&= \min\{M_1(a), M_2(a)\}
    &&   \text{\emph{intersection}}
\\
    (M_1\cup M_2)(a)&= \max\{M_1(a), M_2(a)\}
    &&   \text{\emph{union}}
\\
    (M_1\uplus M_2)(a)&=M_1(a)+M_2(a)
    &&   \text{\emph{sum}}
\\
    (M_1\Msetminus M_2)(a)&= \max\{0, M_1(a)-M_2(a)\}
    &&   \text{\emph{difference}}
\end{align*}
\end{definition}

\noindent
Returning to the example above, we can now adapt the definition of $\vdash'$, using multisets, to be what we really want, namely:
\begin{align*}
[\vectn\f] \vdash_\alg{Z} \p  \qquad\text{iff} \qquad 	&  \alg  Z\models \f_1 + \f_2 + \dots + \f_n \leq \p
\end{align*}

This definition does get us what we want -- we have the deduction theorem and the validity of modus ponens, for instance, as well as a variety of other nice properties. The move to multisets, and the ensuing rejection even of the version of monotonicity which we found to fail when premises are combined with~$\circ$ (namely that $[\f,\f]\nvdash\f$), is quite natural. As mentioned above, the rejection of even this weak form of monotonicity is also motivated for reasons of relevance. This, along with the example, motivates our move to multiset consequence as providing a good basis for relevant consequence: essentially, the reason is that when we reject monotonicity, we want to \emph{really} reject monotonicity. Giving such an account will be the goal of the paper. 

\section{From Relevant Tree Proofs to Relevant Consequence Relations}

Our goal, as mentioned, is to obtain the right abstract account of relevant consequence. Our approach will be to start from a concrete approach, defining what it takes for a tree proof to be relevant (in accordance with the use criterion), and working from there. 

The kind of proof system we'll be interested in are axiomatic, composed out of \emph{consecutions,} at first as above, objects of the form $\Gamma\rhd\f$, where $\Gamma$ is a multiset of formulas and $\f$ a formula, and then, later, as objects $\Gamma\rhd\Delta$ where both $\Gamma,\Delta$ are multisets of formulas. In general, a consecution system will comprise a set of axioms and inference rules, as before. After presenting proofs in such a system, we'll present an abstract definition, and relate the latter to the former by way of justification.

We start with a pair of definitions giving us the grist for our mill:

\begin{definition}\label{d:rules}
An \emph{$\form$-consecution} is a pair $\tuple{\Gamma, \f}$, where $\Gamma$ is a finite multiset of formulas from $\form$,
and $\f \in \form$. We call the elements of $\Gamma$ the \emph{premises} and $\f$ the \emph{conclusion} of the consecution. Instead of `$\langle  {\Gamma,\f}\rangle$', we write `$\Gamma  \rhd\f$' to refer to the associated sequent. Slightly abusing notation, we identify the formula $\varphi$ with the consecution $\emptymultiset \rhd \varphi$. 
\end{definition}

\begin{definition}[Axiomatic system]\label{d:AxSys}
An \emph{axiomatic system} in $\form$ is a set of $\form$-consecutions. The elements of an axiomatic system of the form $\Gamma \rhd\f$ are called \emph{axioms} if\/ $\Gamma = \emptymultiset $ and \emph{rules of inference} otherwise. 
\end{definition}

Let us fix the central notion of this section:
tree-proofs in a given axiomatic system~\AS\ from \emph{multisets} of premises (once we've done so, we'll pick out the relevant ones). As in the case of proof from \emph{sets} of premises, proofs will be certain finite trees labeled by formulas. There are two main differences: 
\begin{itemize}
\item As the rules now have multisets of premises, when applying them to obtain formulas labeling non-leaves we have to speak about the multisets of labels of the immediate predecessors.
\item As some formula  could label multiple leaves, we have to make sure that it is among the premises sufficiently many times (unless it is an axiom). 
\end{itemize}

While the former difference requires only minimal changes in the notion of tree proof, the latter one requires a bigger change: to stress its importance we shall formulate it as an extra (fourth) condition in the definition of the tree-proof, even though it makes the second condition redundant.  
Recall that our aim is to capture the notion of \emph{relevant} proof, which, given the use criterion, means that a formula should occur among the premises \emph{exactly} as many times as we need it for the proof. Let's say a bit more about why we take this rather strong reading of the use criterion, requiring that we list premises \emph{as many times} as they're used, in addition to requiring that they are used. As noted in the introduction, in the standard relevant logics, we generally do not have that $(\f\circ\f)\rightarrow\f$ is valid. Indeed, adding this to \textbf{R} gives rise to some rather irrelevant looking consequences, such as $(\f\to\f)\rightarrow\neg(\p\to\p)$.\footnote{For discussion of related points, see \cite{Dunn21, Tedder22}.} This indicates that keeping track not just of what premises are used, but also \emph{how many times} they're used is important for relevance purposes. 

Given our robust version of the use criterion, there remains a subtle problem of how to deal with axioms which could, in principle, occur among premises; our definition below gives, in our opinion, the appropriate condition, but note that other options are available, and some will be discussed in Remark~\ref{rem:relevant-tree-Use}. Roughly speaking, we shall require axioms to be used in the proof \emph{at least} as many times as they appear among the premises (i.e., if you mention it, you have to use it). 

Let us state the definitions and then discuss a bit further.
The notion of tree-proof from the following definition has already been introduced in \cite[Def.~5.26]{cintula2019}; here we additionally give a condition when a tree-proof is relevant (according to the use criterion).

\begin{definition}[Tree-proof]\label{d:tree-proof-multisets}
Let $\AS$ be an axiomatic system. A proof of a formula $\f$ from a multiset of premises $\Gamma$ in $\AS$ is a finite tree $T$ labeled by formulas such that:
\begin{enumerate}
\item Its root is labelled by $\f$
\item A formula labeling a leaf is an axiom or an element of the \emph{support} of $\Gamma$.
\item For each non-leaf node $\node n$, there is a rule of inference $\Delta \rhd \p$ in $\AS$, where $\f$ is the label of $\node n$ and $\Delta$ is the multiset of formulas labeling the immediate predecessors of $\node n$.
\item Each formula $\x$ which is not an axiom of $\AS$ labels at most $\Gamma(\x)$ leaves.
\end{enumerate}

\noindent
We say that $T$ is a \emph{relevant} tree-proof in $\AS$ from a multiset of premises $\Gamma$ if, in addition: 

\begin{itemize}
\item[(R)] Each formula $\x$ labels at least $\Gamma(\x)$ leaves in $T$.
\end{itemize} 

\noindent
If there is a (relevant) \emph{tree-proof} of a formula $\varphi$ in $\AS$ from a multiset of premises $\Gamma$, we write $\Gamma\vdash_{\!\AS} \f$ (resp.\ $\Gamma\vdash_{\!\AS}^r \f$).\footnote{This notion of tree-proof  could be linearized along the lines of the approach in \cite{BaWe16}, but we will leave the details of that to the reader.}
\end{definition}

We use the following notation to simplify the restriction in (relevant) proofs mentioned above: Given a tree $T$ labeled by formulas,  by $\Lambda_T$ we denote the \emph{multiset} of labels of its leaves. Using this notation  we can express the fourth condition in any of the following equivalent ways: 
\begin{itemize}
    \item If $\x$ is not an axiom of $\AS$, then $\Lambda_T(\x) \le \Gamma(\x)$ .
    \item Each element of $|\Lambda_T\Msetminus \Gamma|$ is an axiom of $\AS$.
    \item There is a finite multiset of axioms $\Delta$ such that $\Lambda_T \le \Gamma\uplus\Delta$.
\end{itemize}
The relevant condition can then be simply expressed as $\Gamma \le \Lambda_T$ and its conjunction with the fourth condition is then equivalent to:
\begin{itemize}
    \item There is a finite multiset of axioms $\Delta$ such that $\Lambda_T = \Gamma\uplus\Delta$.
\end{itemize}





\begin{remark}\label{rem:relevant-tree-Use}
Condition (R) is a simple implementation of the Use Criterion, requiring just that in a relevant tree derivation, each premise is used. The way the premises must be used, in particular, is that they should be inputs to one of the rules of inference used to obtain the conclusion. In many cases, there is only one rule of inference, modus ponens, so in such systems, to quote Meyer, \cite[p.~54]{Meyer74}, ``use in a deduction is use in an application of \emph{modus ponens}.'' 

\pagebreak

The definition of relevant proof is further complicated by the fact that some axioms of $\AS$ could appear in $\Gamma$, and there is a question of whether we should require these to be \emph{used} in the same way that non-axioms are. Consideration of this question naturally gives rise to two related alternative definitions we could use to replace the relevance condition. A tree-proof\/~$T$ in $\AS$ from a mutliset of premises $\Gamma$ is: 
\begin{itemize}
\itemsep=0em
\item[(wr)] \emph{weakly relevant} if each element of $|\Gamma \Msetminus \Lambda_T|$ is an axiom, i.e.,  for each $\x$ \emph{which is not an axiom of \AS} we have $\Lambda_T(\x) = \Gamma(\x)$. 
\item[(sr)] \emph{strongly relevant} if $\Gamma = \Lambda_T$. 
\end{itemize}
Using the suggestive notation $\vdash_{\!\AS}^\mathit{wr}$ and\/ $\vdash_{\!\AS}^\mathit{sr}$, we observe that, as the names suggest:
\[
    {\vdash_{\!\AS}^\mathit{sr}} \subseteq {\vdash_{\!\AS}^\mathit{r}}  \subseteq {\vdash_{\!\AS}^\mathit{wr}} \subseteq {\vdash_{\!\AS}} 
\]
The difference is easily demonstrated: consider a set $\form= \{x,y,z\}$ and an axiomatic system $\AS$ with the axioms~$x$ and~$y$ and no rules.
Then the only tree-proof of $x$ in $\AS$ we can write is the one-node tree $T$ whose only node is labeled by $x$, i.e., $\Lambda_T = [x]$. We can easily determine when $T$ is a (weakly/strongly) (relevant) proof of $x$ from a multiset $\Gamma$: 
\begin{multicols}{2}
\begin{itemize}
\itemsep=0em
\item $\Gamma\vdash_{\!\AS} x$ for any multiset $\Gamma$
\item $\Gamma\vdash_{\!\AS}^\mathit{r}x$  iff $\/ \Gamma \le [x]$;  
\item $\Gamma\vdash^\mathit{sr}_{\!\AS} x$ iff $\/ \Gamma = [x]$;
\item $\Gamma\vdash^\mathit{wr}_{\!\AS} x$  iff $\/  \Gamma(z) = 0$; 
\end{itemize}
\end{multicols}

\noindent
Therefore we have:
\begin{itemize}
\itemsep=0em
\item $[x,z]\vdash_{\!\AS} x$, but not $[x,z]\vdash^\mathit{wr}_{\!\AS} x$.
\item $[x,y]\vdash^\mathit{wr}_{\!\AS} x$, but not $[x,y]\vdash^\mathit{r}_{\!\AS} x$.
\item $\emptymultiset\vdash^\mathit{r}_{\!\AS} x$, but not $\emptymultiset\vdash^\mathit{sr}_{\!\AS} x$.
\end{itemize}

\noindent
Note that if \AS\/ has no axioms, then:
$$
{\vdash^{\mathit{r}}_{\!\AS}} = {\vdash^{\mathit{sr}}_{\!\AS}} = {\vdash_{\!\AS}^{\mathit{wr}}}.
$$ 

Throughout, we'll adopt the relevant condition, leaving consideration of weak and strong relevance, but we note the distinction here in order to draw attention to the fact that there is a choice point and to give some indication of why we have gone the way we did.

\end{remark}

\begin{remark}
It's worth noting that the relations $\vdash^r_{\AS}$ and $\vdash_{\AS}$ may coincide,  e.g., whenever $\AS$ contains the inference rules of \emph{Modus Ponens} ($\f,\f\to\p \rhd \p$) and weakening ($\f \rhd \p\to \f$). 

In such a case we can take any tree-proof $T$ of $\f$ from $\Gamma$ and for any formula $\x$  construct a tree-proof $T^\x$ of $\f$ from $\Gamma$  such that  $\Lambda_{T'} = \Lambda_{T}\uplus[\x]$: indeed
it suffices to add a new root labeled $\x\to\f$ connected to the original one by weakening and then adding a new leaf labeled $\x$ and new root labeled by $\f$ again and connecting them using \emph{Modus Ponens}. In a picture, with weakening we can move from the left tree proof below to the right one, increasing the number of uses of $\x$ by one: 

\begin{center}
\begin{tikzpicture}
\node (1) {$\Gamma$};
\node (2) [below of=1]{$\f$};

\draw[-] (1)--(2);
\end{tikzpicture}
\hspace{15mm}
\begin{tikzpicture}
\node (1) {};
\node (2) [below of=1]{};
\node (3) [right of=2]{};
\node (6) [right of=3] {};
\node (4) [left of=6]{$\x$};
\node (5) [below of=6] {$\f$};
\node (7) [right of=6]{$\x\to\f$};
\node (8) [above of=7]{$\f$};
\node (9) [above of=8] {$\Gamma$};
\draw[-] (3)--(5)--(7)--(8)--(9);
\draw[-] (4)--(5);
\end{tikzpicture}
\end{center}

\noindent
Repeating this trick as many times as necessary yields a \emph{relevant} tree-proof $S$ of $\f$ from $\Gamma$. 
\end{remark}

Our next goal is to find some simple description of (relevant) provability relations  ${\vdash_{\!\AS}^\mathit{(r)}}$. Below is our definition appropriate to the work we aim to do in this paper and, as discussed, it differs in key regards from other standard accounts which involve monotonicity and contraction as defining conditions (the latter just in virtue of using sets).


\begin{definition}\label{def:CR}
A \emph{consequence relation} on a set of formulas $\form$ is a relation ${\vdash}$ between finite multisets of formulas and formulas obeying the following conditions for each finite multiset $\Gamma\uplus\Delta\uplus[\f,\p]$ of formulas:\footnote{Throughout we'll state conditions with bare variables as a convention for the universal closure thereof.}
\begin{itemize}
\item  $\f\vdash \f$. \hfill \REFL

\item If\/ $\Gamma,\p\vdash \f$ and $\Delta\vdash \p$, then $\Gamma\uplus\Delta\vdash \f$. \hfill \CUT
\end{itemize}
We add the prefix \emph{monotone} if furthermore: 
\begin{itemize}
\item If\/ $\Gamma\vdash \f$, then $\Gamma\uplus \Delta \vdash \f$. \hfill \MONO
\end{itemize}
We add the prefix \emph{contractive} if furthermore:  
\begin{itemize}
\item If\/ $\Gamma\uplus[\p,\p]\vdash \f$, then $\Gamma\uplus[\p]\vdash \f$. \hfill \CONT
\end{itemize}

\noindent
Furthermore, a consequence relation is \emph{Tarskian} if it is both monotone and contractive. A formula $\x$ such that $\vdash \x$ is called a \emph{theorem} of\/~$\vdash$. 
\end{definition}

We use some usual notational conventions, stated next to their set versions for easy comparison:
\[
\begin{array}{rcl@{\qquad}l}
                        &                       & \text{in sets:}            & \text{in multisets:}\\
\vectn{\f}\vdash\f      & \text{ stands for }   & \{\vectn{\f}\}\vdash\f    & [\vectn{\f}]\vdash\f \\
\Gamma,\Delta\vdash\f   & \text{ stands for }   & \Gamma\cup\Delta\vdash\f  & \Gamma\uplus\Delta\vdash\f\\
\Gamma,\p\vdash\f       & \text{ stands for }   & \Gamma\cup\{\p\}\vdash\f  & \Gamma\uplus[\p]\vdash\f\\
\vdash \f               & \text{ stands for }   & \emptyset \vdash \f       & \emptymultiset\vdash \f
\end{array}
\]

\begin{remark}\label{r:RelToTarski}
It is easy to see that there is a one-one correspondence between finitary Tarskian consequence relations (introduced in Definition~\ref{def:Tarski}) and our monotone contractive consequence relations.
\end{remark}

\begin{example}\label{e:GoodAbel}
Recall that in Example~\ref{abelset} the third of our attempted definitions, the relation $\vdash'$, was actually ill-defined with sets; but the move to multisets avoids the problem. Indeed the relation $\vdash_\alg{Z}$ defined at the end of Section~\ref{sect:Tarskian} is clearly an example of a relation which is non-monotone (e.g., $\vdash \bar0$ but not $\bar1 \vdash \bar0$) and non-contractive (e.g., $\bar1 \vdash \bar1$ but not $\bar1, \bar1 \vdash \bar1$). See the next section for more details and discussion on related logical systems.
\end{example}


\begin{remark}
There's another well-known kind of motivation for non-monotonic consequence, which is discussed at length, e.g., in~\cite{MakinsonD}.\footnote{Here sets are employed, rather than multisets, so we'll follow that convention for the remainder of this remark.} The goal of this kind of non-monotonic consequence is to characterise \emph{defeasible} inference, where a consequence may obtain only in the absence of some \emph{defeaters} -- that is, some premises may have a conclusion as a consequence, but an extension of the premise set by some further premises \emph{defeating} some of the existing premises. A famous example here concerns $\f\coloneqq$ `Tweety is a penguin', $\p\coloneqq$ `Tweety is a bird', and $\x\coloneqq$ `Tweety flies'. The inference from $\p$ to $\x$ is defeasibly correct, but that from $\{\f,\p\}$ to $\x$ is clearly incorrect. So \MONO\ is clearly inappropriate for an account of defeasible inference, but relevant consequence, in our sense, also does not provide an especially good account of this. For instance, note that while $\f$ defeasibly implies $\p$, and $\p$ defeasibly implies $\x$, it is not the case that $\f$ defeasibly implies $\x$, and this is just an instance of \CUT.

Makinson \cite[p.~5]{MakinsonD}, to avoid this sort of problem, considers non-monotonic consequence relations more appropriate to defeasible inference, given by the following two conditions:
\begin{enumerate}
\item[$\bullet$] $\Gamma \vdash \f$, for every $\f \in \Gamma$. \hfill \GENREFL
\item[$\bullet$] If\/ $\Gamma, \Delta\vdash \f$ and $\Gamma \vdash \x$ for every $\x\in \Delta$, then $\Gamma\vdash \f$. \hfill \CUMCUT
\end{enumerate}
\CUMCUT\ clearly follows from \CUT.
On the other hand, \GENREFL\ is clearly appropriate for \emph{monotone} consequence relations only, as we will see in the next lemma. 
One could consider a common generalization of both notions, but doing so would lead us outside the scope of this paper.
\end{remark}

With that remark out of the way, let us note that the following facts are immediate: 

\begin{lemma}\label{l:OnCuts2}
Let ${\vdash}$ be a consequence relation. Then for each multiset of theorems $\Delta$, $n\geq 1$, formulas $\f, \vectn{\x}$, and finite multisets $\vectn{\Delta}$ of formulas: 
\begin{itemize}
\item If\/ $\vectn{\x}\vdash \f$ and $\Delta_i\vdash \x_i$  for each $i\leq n$, then $\vectn{\Delta} \vdash \f$. \hfill \RELCUT
\item If\/ $\Gamma_1,\Delta\vdash \f$, then $\Gamma_1\vdash \f$. \hfill \THMREM
\end{itemize}

\noindent
Furthermore, ${\vdash}$ is monotone iff for each multiset $\Gamma$ of formulas: 
\begin{itemize}
\item  $\Gamma,\f \vdash \f$ \hfill \GENREFL
\end{itemize}
\end{lemma}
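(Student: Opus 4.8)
The plan is to prove the three claims of Lemma~\ref{l:OnCuts2} in order, each time reducing to the single-cut rule \CUT{} available from Definition~\ref{def:CR}.

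\textbf{Proof of \RELCUT.} The plan is to induct on $n$. For $n=1$ the claim is precisely \CUT{} with $\Gamma=\emptymultiset$ (using that $[\x_1]\vdash\f$ becomes $\x_1\vdash\f$, and $\emptymultiset\uplus\Delta_1=\Delta_1$). For the inductive step, suppose $[\x_1,\dots,\x_{n+1}]\vdash\f$ and $\Delta_i\vdash\x_i$ for each $i\leq n+1$. Apply \CUT{} once to $[\x_1,\dots,\x_n]\uplus[\x_{n+1}]\vdash\f$ together with $\Delta_{n+1}\vdash\x_{n+1}$ to obtain $[\x_1,\dots,\x_n]\uplus\Delta_{n+1}\vdash\f$. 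Now I would like to invoke the induction hypothesis to cut away $\x_1,\dots,\x_n$ — but the ``context'' $\Delta_{n+1}$ is now present, so I actually need a context-sensitive version: by a routine secondary induction one shows that if $\Gamma\uplus[\x_1,\dots,\x_n]\vdash\f$ and $\Delta_i\vdash\x_i$ for all $i\leq n$, then $\Gamma\uplus\Delta_1\uplus\dots\uplus\Delta_n\vdash\f$, which is just \CUT{} applied $n$ times peeling off one $\x_i$ at a time. Taking $\Gamma=\Delta_{n+1}$ finishes the step. (Alternatively one states \RELCUT{} from the outset in its context form and the whole thing is a single induction applying \CUT{} $n$ times.)

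\textbf{Proof of \THMREM.} Here $\Delta$ is a multiset of \emph{theorems}, so write $\Delta=[\p_1,\dots,\p_k]$ with $\vdash\p_j$ for each $j$. We have $\Gamma_1\uplus[\p_1,\dots,\p_k]\vdash\f$. Apply \CUT{} $k$ times: at each step cut the premise $\p_j$ against its derivation $\vdash\p_j$, i.e.\ $\emptymultiset\vdash\p_j$; since the ``$\Delta$'' side of \CUT{} contributes the empty multiset, this simply deletes one occurrence of $\p_j$ from the left without adding anything. After $k$ applications nothing remains but $\Gamma_1$, giving $\Gamma_1\vdash\f$. (This is also an immediate instance of the context form of \RELCUT{} with each $\Delta_i=\emptymultiset$, so one may just cite \RELCUT.)

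\textbf{Proof of the monotonicity characterization.} For the forward direction, assume \MONO{}; then from $\f\vdash\f$ (\REFL) and \MONO{} with $\Delta:=\Gamma$ we get $\Gamma\uplus[\f]\vdash\f$, i.e.\ $\Gamma,\f\vdash\f$, which is \GENREFL. For the converse, assume \GENREFL{} and suppose $\Gamma\vdash\f$. Write $\Delta=[\p_1,\dots,\p_k]$; by \GENREFL{} we have $[\p_j]\uplus(\Delta\Msetminus[\p_j])\vdash\p_j$, but more directly, apply \RELCUT{} (already proved) in the following way: enumerate the elements of $\Gamma$ as $\x_1,\dots,\x_n$, so $[\x_1,\dots,\x_n]\vdash\f$; for each $i$ take $\Delta_i:=[\x_i]\uplus\Delta$, which satisfies $\Delta_i\vdash\x_i$ by \GENREFL; then \RELCUT{} yields $\biguplus_i\Delta_i\vdash\f$, i.e.\ $\Gamma\uplus(n\cdot\Delta)\vdash\f$, which overshoots. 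To get exactly $\Gamma\uplus\Delta$ one instead takes $\Delta_1:=[\x_1]\uplus\Delta$ and $\Delta_i:=[\x_i]$ for $i\geq 2$, each of which is a consequence of itself's first element by \GENREFL; \RELCUT{} then gives $\Gamma\uplus\Delta\vdash\f$, which is \MONO.

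\textbf{Main obstacle.} The only subtlety — and the step I expect to need the most care — is the bookkeeping in \RELCUT: the naive induction on $n$ fails because after one application of \CUT{} the freshly introduced $\Delta_{n+1}$ sits in the context and blocks a direct appeal to the induction hypothesis. The fix is to prove (or state) the statement in its context-parametrized form $\Gamma\uplus[\x_1,\dots,\x_n]\vdash\f$ and $\Delta_i\vdash\x_i \Rightarrow \Gamma\uplus\Delta_1\uplus\dots\uplus\Delta_n\vdash\f$, which is then a clean induction applying \CUT{} once per step; everything else (\THMREM{} and the monotonicity equivalence) falls out as special cases or short two-line arguments.
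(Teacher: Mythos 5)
The paper offers no proof of this lemma (the three facts are declared immediate), and your argument is the natural unfolding: iterate \CUT{} for \RELCUT{} and \THMREM, and combine \REFL, \GENREFL{} and \RELCUT{} for the monotonicity equivalence. Your identification of the bookkeeping issue in \RELCUT{} and its repair via the context-parametrized form (peeling off one $\x_i$ per application of \CUT) is exactly right, and \THMREM{} is correct as written.

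There is one small but genuine gap in the direction \GENREFL{} $\Rightarrow$ \MONO: your construction enumerates the elements of $\Gamma$ as $\x_1,\dots,\x_n$ and attaches $\Delta$ to $\Delta_1=[\x_1]\uplus\Delta$, which presupposes $n\geq 1$. The instance of \MONO{} with $\Gamma=\emptymultiset$ (from $\vdash\f$ infer $\Delta\vdash\f$) is therefore not covered, and it is not a vacuous case --- Example~\ref{e:GoodAbel} uses precisely this instance ($\vdash\bar{0}$ but $\bar{1}\nvdash\bar{0}$) to witness non-monotonicity, and \RELCUT{} itself is only stated for $n\geq 1$, so you cannot appeal to it here. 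The fix is one line: \GENREFL{} gives $\Delta,\f\vdash\f$, and cutting this against $\emptymultiset\vdash\f$ yields $\Delta\uplus\emptymultiset=\Delta\vdash\f$. With that case added, your proof is complete.
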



Let us note that any consequence relation can be seen as a set of consecutions. This allows us to state and prove several interesting facts. The first one is obvious.

\begin{lemma}
Let $\mathcal X$ be systems of (monotone and/or contractive) consequence relations. Then so is $\bigcap \mathcal X$.
\end{lemma}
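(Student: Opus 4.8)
The plan is to unwind the definition of $\bigcap\mathcal X$ and verify the conditions of Definition~\ref{def:CR} one by one; there is no real obstacle here, only routine bookkeeping. Write $\vdash^{\ast}$ for the relation $\bigcap\mathcal X$, so that by definition $\Gamma\vdash^{\ast}\f$ holds exactly when $\Gamma\vdash\f$ holds for every ${\vdash}\in\mathcal X$. In particular $\vdash^{\ast}$ is again a relation between finite multisets of formulas and formulas, hence a legitimate candidate, and the statement decomposes into three parallel assertions: that $\vdash^{\ast}$ is a consequence relation whenever every member of $\mathcal X$ is, that $\vdash^{\ast}$ is moreover monotone whenever every member is, and likewise for contractivity (whence also for the Tarskian case, which is just the conjunction of the latter two).

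For \REFL, fix $\f$: since $\f\vdash\f$ holds in each ${\vdash}\in\mathcal X$, it holds in $\vdash^{\ast}$. For \CUT, suppose $\Gamma,\p\vdash^{\ast}\f$ and $\Delta\vdash^{\ast}\p$; then in each ${\vdash}\in\mathcal X$ we have $\Gamma,\p\vdash\f$ and $\Delta\vdash\p$, so applying \CUT\ in ${\vdash}$ yields $\Gamma\uplus\Delta\vdash\f$, and since this holds for all ${\vdash}\in\mathcal X$ we get $\Gamma\uplus\Delta\vdash^{\ast}\f$. The verifications of \MONO\ (assuming each member of $\mathcal X$ monotone) and \CONT\ (assuming each contractive) are word-for-word the same: apply the relevant closure condition inside every ${\vdash}\in\mathcal X$, then quantify back over $\mathcal X$.

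The reason the proof is this short is structural: every condition in Definition~\ref{def:CR} has the shape ``if [a finite conjunction of atomic consecutions $\Sigma\vdash\sigma$] then [an atomic consecution]'', i.e.\ it is a Horn-type condition on the relation, and any such condition is automatically inherited by an arbitrary set-theoretic intersection of relations that each satisfy it. The only degenerate point worth flagging is $\mathcal X=\emptyset$, in which case $\bigcap\mathcal X$ is the total relation pairing every finite multiset with every formula; this vacuously satisfies \REFL, \CUT, \MONO, and \CONT, so the lemma holds in that case as well.
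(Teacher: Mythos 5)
Your proof is correct and matches the paper's intent exactly: the paper states this lemma without proof (calling it ``obvious''), and the implicit argument is precisely yours --- each defining condition of Definition~\ref{def:CR} is a Horn-type closure condition, hence preserved under arbitrary intersections of relations viewed as sets of consecutions. Your remark about the empty intersection is a reasonable extra precaution and does not conflict with anything in the paper.
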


Next we prove that the relation $\vdash^{r}_{\!\AS}$ is indeed an example of a consequence relation, and furthermore that it is an especially simple example of such. This goes part of the way towards justifying our abstract definition in terms of the concrete examples which obey the Use Criterion. 

\begin{proposition}\label{prop:least-conseq}
Let $\AS$ be an axiomatic system in $\form$. Then $\vdash^{r}_{\!\AS}$ is the least consequence relation on $\form$ containing $\AS$ and\/ $\vdash^{}_{\!\AS}$ is the least monotone consequence relation on $\form$ containing $\AS$.  
\end{proposition}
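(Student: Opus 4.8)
\emph{Proof sketch.} The plan is to establish, for $\vdash^{r}_{\!\AS}$, the three facts ``it is a consequence relation'', ``it contains $\AS$'', and ``it is contained in every consequence relation containing $\AS$'', together with the analogous three facts for $\vdash^{}_{\!\AS}$ in the monotone setting. The one technical device used throughout is the ``combined form'' of conditions~(4) and~(R) recorded right after Definition~\ref{d:tree-proof-multisets}: a tree $T$ with root labelled $\f$ and satisfying condition~(3) at every non-leaf is a relevant tree-proof of $\f$ from $\Gamma$ iff $\Lambda_T = \Gamma\uplus\Delta$ for some finite multiset of axioms $\Delta$, and an arbitrary tree-proof (witnessing $\vdash^{}_{\!\AS}$) iff $\Lambda_T \le \Gamma\uplus\Delta$ for such a $\Delta$.

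For membership and the structural conditions: \REFL\ is witnessed by the one-node tree labelled $\f$ (here $\Lambda_T = [\f]$), and each consecution $\Gamma\rhd\f\in\AS$ is witnessed by the depth-one tree whose root $\f$ has immediate predecessors labelled by the multiset $\Gamma$ (here $\Lambda_T = \Gamma$). For \CUT, given a relevant tree-proof $T_1$ of $\f$ from $\Gamma\uplus[\p]$ and one, $T_2$, of $\p$ from $\Delta$, condition~(R) forces $T_1$ to have a leaf labelled $\p$; graft a copy of $T_2$ onto one such leaf. Writing $\Lambda_{T_1} = (\Gamma\uplus[\p])\uplus\Delta_1$ and $\Lambda_{T_2}=\Delta\uplus\Delta_2$ with $\Delta_1,\Delta_2$ multisets of axioms, the grafted tree $T'$ satisfies $\Lambda_{T'} = (\Lambda_{T_1}\Msetminus[\p])\uplus\Lambda_{T_2} = (\Gamma\uplus\Delta)\uplus(\Delta_1\uplus\Delta_2)$, hence is a relevant tree-proof of $\f$ from $\Gamma\uplus\Delta$. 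The identical grafting (onto a single $\p$-leaf, or, when $\p$ labels no leaf of $T_1$, none at all) proves \CUT\ for $\vdash^{}_{\!\AS}$, and since any tree witnessing $\Gamma\vdash^{}_{\!\AS}\f$ also witnesses $\Gamma\uplus\Delta\vdash^{}_{\!\AS}\f$ directly from the $\le$-form above, \MONO\ holds.

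For leastness, let $\vdash$ be any consequence relation with $\AS\subseteq{\vdash}$; assuming $\Gamma\vdash^{r}_{\!\AS}\f$ via a tree $T$ we prove $\Gamma\vdash\f$ by induction on $T$. If $T$ is a single node, then (R) gives $\Gamma\le[\f]$, so either $\Gamma=[\f]$ and \REFL\ applies, or $\Gamma=\emptymultiset$, forcing $\f$ to be an axiom, whence $\f\in\AS\subseteq{\vdash}$. If the root $\f$ is a non-leaf, let $T_1,\dots,T_k$ ($k\ge1$) be the immediate subtrees, rooted at $\p_1,\dots,\p_k$, so that $[\p_1,\dots,\p_k]\rhd\f\in\AS$ and $\Lambda_T=\Lambda_{T_1}\uplus\dots\uplus\Lambda_{T_k}$. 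The crucial move is to split $\Gamma=\Gamma_1\uplus\dots\uplus\Gamma_k$ with each $T_i$ a relevant tree-proof of $\p_i$ from $\Gamma_i$: set $\Gamma_i(\x)=\Lambda_{T_i}(\x)$ for every non-axiom $\x$ (this is forced, and globally consistent since $\Gamma(\x)=\Lambda_T(\x)$ for non-axioms by (4)+(R)), and parcel out the remaining axiom-labelled copies in $\Gamma$ among the $\Gamma_i$ subject to $\Gamma_i(\x)\le\Lambda_{T_i}(\x)$, which is possible because $\Gamma(\x)\le\Lambda_T(\x)=\sum_i\Lambda_{T_i}(\x)$. The induction hypothesis gives $\Gamma_i\vdash\p_i$, and since $\AS\subseteq{\vdash}$ gives $\p_1,\dots,\p_k\vdash\f$, we obtain $\Gamma\vdash\f$ by \RELCUT\ (Lemma~\ref{l:OnCuts2}). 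For the monotone statement the same induction goes through with $\vdash$ monotone: in the base case absorb surplus premises by \MONO; in the inductive step let $\Gamma_i$ collect exactly the non-axiom leaves of $T_i$ (so $\Gamma_1\uplus\dots\uplus\Gamma_k\le\Gamma$ by condition~(4) on $T$), apply \RELCUT\ to get $\Gamma_1\uplus\dots\uplus\Gamma_k\vdash\f$, and finish with \MONO.

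The main obstacle is the premise-splitting in the leastness argument. Its non-axiom part is rigidly determined by the leaves of the subtrees, but the axiom-labelled premises enjoy genuine freedom of placement, and one must check that this freedom is simultaneously enough and constrained enough to make every $T_i$ a genuine relevant tree-proof and to reassemble the pieces to \emph{exactly} $\Gamma$ (for $\vdash^{r}_{\!\AS}$) or to something $\le\Gamma$ absorbable by \MONO\ (for $\vdash^{}_{\!\AS}$); everything else is routine multiset bookkeeping with $\uplus$ and $\Msetminus$.
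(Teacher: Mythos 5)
Your proof is correct. Its first half --- establishing that $\vdash^{(r)}_{\!\AS}$ is a (monotone) consequence relation containing $\AS$ via the one\-/node tree for \REFL\ and grafting a copy of the second proof onto a $\p$\-/labelled leaf for \CUT, with the same multiset bookkeeping for $\Lambda$ --- is essentially identical to the paper's argument. The minimality half, however, takes a genuinely different route. The paper runs the induction bottom\-/up: to each node $\node{n}$ it attaches the multiset $\Gamma_{\node{n}}$ of \emph{all} leaf labels of the subtree above $\node{n}$ (axiom\-/labelled leaves included), proves $\Gamma_{\node{n}}\vdash\f_{\node{n}}$ by \RELCUT, arrives at $\Lambda_T\vdash\f$ at the root, and only then discards the surplus axiom occurrences $\Delta$ (where $\Lambda_T=\Gamma\uplus\Delta$) by appealing to \THMREM\ of Lemma~\ref{l:OnCuts2}. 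You instead strengthen the induction invariant to speak directly about the declared premise multiset, splitting $\Gamma$ top\-/down as $\Gamma_1\uplus\dots\uplus\Gamma_k$ so that each immediate subtree $T_i$ is itself a relevant tree\-/proof from its share $\Gamma_i$; the non\-/axiom part of the split is forced by the identity $\Gamma(\x)=\Lambda_T(\x)$ (conditions (4) plus (R)), and the axiom\-/labelled occurrences are distributed subject to $\Gamma_i(\x)\le\Lambda_{T_i}(\x)$, which is feasible because $\Gamma(\x)\le\Lambda_T(\x)=\sum_i\Lambda_{T_i}(\x)$. Your version never invokes \THMREM\ but pays for this with the combinatorial splitting (which you verify correctly); the paper's version keeps the induction invariant trivially well\-/defined and defers the axiom clean\-/up to a single application of the lemma at the end. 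Both arguments are sound and establish the proposition, including the monotone variant, where your use of \MONO\ to absorb the surplus $\Gamma\Msetminus(\Gamma_1\uplus\dots\uplus\Gamma_k)$ mirrors the paper's direct proof of $\Gamma\vdash\f_{\node{n}}$ at every node.
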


\begin{proof}
First we show that $\vdash^{r}_{\!\AS}$ (resp.\ $\vdash_{\!\AS}$) is (monotone) consequence relation on $A$. The \REFL\ for both $\vdash_{\!\AS}$ and $\vdash^{r}_{\!\AS}$ is obvious as a tree with a single node labeled by $\f$ is a (relevant) tree-proof of $\f$ from the premises $[\f]$. The \MONO\ for $\vdash_{\!\AS}$ is also straightforward (clearly any tree-proof $T$ from premises $\Gamma$ is tree-proof from $\Gamma\uplus\Delta$). Finally, we have to deal with \CUT: suppose that we have a (relevant) tree-proof $T$ of $\f$ from $\Gamma \uplus[\p]$ and a (relevant) tree-proof $S$ of $\p$ from $\Delta$ and we have to find a (relevant) proof $R$ of $\f$ from $\Gamma\uplus\Delta$. First note that in the relevant case $\p$ has to label a leaf $\node{l}$ of $T$ (as $\Gamma\uplus[\p] \le \Lambda_T$) and in the \emph{monotonic} we can, without loss of generality,  assume it at well (indeed otherwise already $T$ would be the proof of $\f$ from $\Gamma\uplus\Delta$). Thus we can define $R$ as the tree resulting from $T$ by replacing $\node l$ by the tree $S$ and note that for each formula $\x$ we have:
 $$
 \Lambda_{R}(\x) =   \begin{cases}
  \Lambda_{T}(\x) + \Lambda_{S}(\x)         & \text{if } \x \neq \p\\
   \Lambda_{T}(\x) + \Lambda_{S}(\x) - 1   & \text{if } \x = \p
 \end{cases}
$$
To show that $R$ is indeed a tree-proof of $\f$ in $\AS$ from $\Gamma\uplus\Delta$ we consider a formula $\x$ which is not an axiom and recall from the assumptions we know that:
 $$
 \Lambda_{T}(\x) \le (\Gamma\uplus[\p])(\x)  \qquad\text{and}\qquad   \Lambda_{S}(\x) \le \Delta(\x)
 $$
 Thus for $\x \neq \p$ we get:
  $$
 \Lambda_{R}(\x) =  \Lambda_{T}(\x) + \Lambda_{S}(\x) \le (\Gamma\uplus[\p])(\x) + \Delta(\x) = \Gamma(\x) + \Delta(\x) = (\Gamma\uplus \Delta)(\x)
 $$
and for  $\x = \p$ we get:
$$
 \Lambda_{R}(\p) =  \Lambda_{T}(\p) + \Lambda_{S}(\p) - 1 \le (\Gamma\uplus[\p])(\p) + \Delta(\p) - 1 = \Gamma(\p) + 1  + \Delta(\p) - 1 = (\Gamma\uplus \Delta)(\p) 
$$
The proof that $R$ is relevant if $T$ and $S$ are is analogous; we only observe that in this case we have inequalities converse to those above even for the axioms.

The fact that $\AS \subseteq{\vdash^{(r)}_{\!\AS}}$ is obvious. In order to show the minimality condition, let us focus on the relevant case (the monotonic one is just its simpler variant). Consider any consequence relation $\vdash$ such that $\AS\subseteq{\vdash}$; we need to show that ${\vdash^{r}_{\!\AS}}\subseteq\,\vdash$.  Assume that we have a relevant tree-proof $T$ of $\f$ from $\Gamma$; for each node~$\node n$, let $\f_\node n$ denote the formula labeling it and let $\Gamma_\node n$ denote the multiset of formulas labeling the leaves of the subtree of $T$ containing the node $\node n$ and all its predecessors. 

We prove by induction over the tree that $\Gamma_\node n\vdash \f_\node n$ (in the monotone case we prove directly that $\Gamma\vdash \f_\node n)$. For $\node n$ being a leaf we have $\Gamma_\node n = [\f_\node n]$ and so the claim follows
by the \REFL\ of $\vdash$. Let $[\node n_1, \dots, \node n_k]$ be the multiset of immediate  predecessors of $\node n$; then we know that $\f_{\node n_1}, \dots, \f_{\node n_k}\rhd \f_\node n \in \AS$ and thus also $\f_{\node n_1}, \dots, \f_{\node n_k}\vdash \f_\node n$ (because $\AS \subseteq{\vdash}$). By the induction hypothesis we have $\Gamma_{\node n_i}\vdash \f_{\node n_i}$ for each $i \leq k$ and thus by the \RELCUT\ we obtain  $\Gamma_{\node n_1}, \dots, \Gamma_{\node n_k} \vdash \f_\node n$. Observe that $\Gamma_\node n = \Gamma_{\node n_1} \uplus \Gamma_{\node n_2} \dots  \uplus \Gamma_{\node n_k}$, which completes the proof of this part.

To complete the whole proof it suffices to observe that for the root $\node r$ we have $\Gamma_\node r = \Lambda_T$ thus $\Lambda_T \vdash \f$. Because $T$ is relevant proof we know that  there is a finite multisetset $\Delta$ of axioms of $\AS$ such that $\Lambda_T = \Gamma \uplus \Delta$. From the fact that $\AS \subseteq{\vdash}$ we know that $\Delta$ consists of theorems of $\vdash$, and therefore by \THMREM\ we know that  $\Gamma\vdash \f$.
\end{proof}

So we can obtain the least (relevant) consequence relation for $\AS$ by taking that generated by its (relevant) tree proofs as defined above, and these were defined so as to guarantee observation of the Use Criterion. So while the abstract consequence relations we have defined simply remove monotonicity and a form of contraction from the Tarskian case, nonetheless this relatively minor adaptation is adequate to capture the behaviour of the concrete cases in which we've been most interested. Indeed, we can say a bit more to justify this connection.


\begin{definition}
Let $\vdash$ be a consequence relation. We say that an axiomatic systems $\AS$ is a (relevant) presentation of $\vdash$ whenever ${\vdash} = {\vdash^{(r)}_{\!\AS}}$. 
\end{definition}

Clearly any consequence relation $\vdash$ can be seen as an axiomatic system and so due to Proposition \ref{prop:least-conseq}
it is its own relevant presentation (i.e., ${\vdash} = {\vdash^r_{\vdash}}$); if furthermore $\vdash$ is monotone, then it is its own presentation (${\vdash} = {\vdash_{\vdash}}$). Therefore we can obtain a variant of \L{}o\'s--Suszko theorem.

\begin{theorem}[\L{}o\'s--Suszko]\label{szko}
Every consequence relation $\vdash$ on $\form$ has a relevant presentation. Every monotone consequence relation $\vdash$ on $\form$ has a presentation.
\end{theorem}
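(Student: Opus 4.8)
The plan is to observe that the theorem is an immediate corollary of Proposition~\ref{prop:least-conseq} together with the remark that any consequence relation, regarded as a set of consecutions, is an axiomatic system in the sense of Definition~\ref{d:AxSys}. Concretely, given a consequence relation $\vdash$ on $\form$, I would put $\AS := {\vdash}$, i.e., take as axiomatic system the set of all $\form$-consecutions $\Gamma \rhd \f$ with $\Gamma \vdash \f$ (those with $\Gamma = \emptymultiset$ being its axioms, the rest its rules of inference); this lands in the scope of Definition~\ref{d:AxSys} since a consequence relation is by definition a set of pairs $\tuple{\Gamma,\f}$ with $\Gamma$ a finite multiset, hence a set of $\form$-consecutions.

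First I would invoke Proposition~\ref{prop:least-conseq}, which says that $\vdash^r_{\AS}$ is the \emph{least} consequence relation on $\form$ containing $\AS$, and in particular that $\AS \subseteq {\vdash^r_{\AS}}$. Since $\AS = {\vdash}$, this inclusion reads ${\vdash} \subseteq {\vdash^r_{\AS}}$. For the converse, note that $\vdash$ is itself a consequence relation containing $\AS$ (trivially, as $\AS = {\vdash}$), so the minimality half of Proposition~\ref{prop:least-conseq} gives ${\vdash^r_{\AS}} \subseteq {\vdash}$. The two inclusions yield ${\vdash} = {\vdash^r_{\AS}}$, which is exactly the assertion that $\AS$ is a relevant presentation of $\vdash$. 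For the second claim the argument is identical with $\vdash^r_{\AS}$ replaced by $\vdash_{\AS}$: if $\vdash$ is monotone, then Proposition~\ref{prop:least-conseq} says $\vdash_{\AS}$ is the least \emph{monotone} consequence relation containing $\AS$, so $\AS \subseteq {\vdash_{\AS}}$ gives ${\vdash} \subseteq {\vdash_{\AS}}$, while minimality applied to the monotone consequence relation $\vdash \supseteq \AS$ gives ${\vdash_{\AS}} \subseteq {\vdash}$; hence ${\vdash} = {\vdash_{\AS}}$.

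There is no substantive obstacle here: all the real work has already been carried out in Proposition~\ref{prop:least-conseq}. The only points meriting a line of care are the two bookkeeping observations just made — that the construction $\AS := {\vdash}$ genuinely produces an axiomatic system, and that ``$\AS$ is contained in a consequence relation'' is to be read as plain inclusion of sets of consecutions, so that ``$\AS \subseteq {\vdash}$'' holds by construction. One could also phrase the whole proof without introducing new notation, simply noting that $\vdash$ is its own (relevant) presentation, which is what the discussion immediately preceding the theorem statement does.
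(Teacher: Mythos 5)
Your proposal is correct and is exactly the paper's argument: the paper proves the theorem in the sentence immediately preceding its statement, by viewing $\vdash$ itself as an axiomatic system and applying Proposition~\ref{prop:least-conseq} to conclude ${\vdash} = {\vdash^r_{\vdash}}$ (and ${\vdash} = {\vdash_{\vdash}}$ in the monotone case). Your spelling-out of the two inclusions via $\AS \subseteq {\vdash^r_{\AS}}$ and minimality is just a slightly more explicit rendering of the same reasoning.
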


It is worth noting that the previous theorem, perhaps surprisingly, implies that every \emph{monotone} consequence relation has a \emph{relevant} presentation. However we have seen already in Remark~\ref{r:TarskiCanBeRelevant}, that we could have ${\vdash_{\!\AS}} = {\vdash^{r}_{\!\AS}}$ which implies ${\vdash^{r}_{\!\AS}}$ could be a monotone consequence relation: in general, relevance of proof is a property that holds regardless of one's choice of axioms and inference rules. If one chooses irrelevant axioms and rules, then making the proofs `relevant' in our sense won't do you much good as far as avoiding relevance is concerned.\footnote{A related point was made by Bennett \cite{Bennett65} in response to Anderson and Belnap: he noted that the Use Criterion by itself does not ensure relevance as you can just add further axioms and rules to obtain all the irrelevant consequences you like. It is, thus, perhaps not surprising that our approach, reliant on the Use Criterion as it is, has the same property.} The next two propositions (with obvious proofs) explicate this phenomenon. 

\begin{proposition}
Let $\AS$ be an axiomatic system. Then $\vdash^{r}_{\!\AS}$ is a monotone consequence relation iff 
$$
{\vdash_{\!\AS}} = {\vdash^{r}_{\!\AS}}
$$
\end{proposition}

\begin{proposition}
Let $\vdash$ be a consequence relation. Then for the least monotone consequence relation $\vdash_{m}$ containing $\vdash$, known as the \emph{monotonic companion} of\/~$\vdash$, we have:
$$
\Gamma \vdash_{m} \f \qquad\text{iff}\qquad \Delta \vdash \f  \quad\text{for some } \Delta\subseteq \Gamma.
$$ 
Furthermore, any relevant presentation of\/~$\vdash$ is a presentation of\/~$\vdash_m$.
\end{proposition}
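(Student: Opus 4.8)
The plan is to exhibit an explicit candidate for the monotonic companion and verify its extremal property directly. Define a relation $\vdash^{\ast}$ on finite multisets of formulas and formulas by
$$\Gamma \vdash^{\ast} \f \quad\text{iff}\quad \Delta \vdash \f \ \text{ for some } \Delta \le \Gamma$$
(reading the $\subseteq$ of the statement as the submultiset order $\le$ of Definition~\ref{def:multiset}). I would show that $\vdash^{\ast}$ is a monotone consequence relation, that it contains $\vdash$, and that it is contained in every monotone consequence relation containing $\vdash$; this identifies $\vdash^{\ast}$ with $\vdash_m$ and gives the displayed equivalence. The second claim is then a short consequence of Proposition~\ref{prop:least-conseq}.

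For the first step, \REFL\ is immediate from $\f \vdash \f$ together with $[\f] \le [\f]$, and \MONO\ is clear because a witness $\Delta \le \Gamma$ for $\Gamma \vdash^{\ast} \f$ is still a witness $\Delta \le \Gamma \uplus \Delta'$ for $\Gamma \uplus \Delta' \vdash^{\ast} \f$. The only point needing care is \CUT. Suppose $\Gamma \uplus [\p] \vdash^{\ast} \f$ and $\Delta \vdash^{\ast} \p$, witnessed by $\Gamma_0 \le \Gamma \uplus [\p]$ with $\Gamma_0 \vdash \f$ and by $\Delta_0 \le \Delta$ with $\Delta_0 \vdash \p$. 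I would split on the multiplicity of $\p$ in $\Gamma_0$. If $\Gamma_0(\p) = 0$, then $\Gamma_0 \le \Gamma \le \Gamma \uplus \Delta$, so $\Gamma_0$ already witnesses $\Gamma \uplus \Delta \vdash^{\ast} \f$. If $\Gamma_0(\p) \ge 1$, set $\Gamma_1 := \Gamma_0 \Msetminus [\p]$, so that $\Gamma_0 = \Gamma_1 \uplus [\p]$ and, since $\Gamma_1(\p) = \Gamma_0(\p) - 1 \le \Gamma(\p)$ and $\Gamma_1(\x) = \Gamma_0(\x) \le \Gamma(\x)$ for $\x \ne \p$, we have $\Gamma_1 \le \Gamma$. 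Now \CUT\ for $\vdash$ applied to $\Gamma_1 \uplus [\p] \vdash \f$ and $\Delta_0 \vdash \p$ yields $\Gamma_1 \uplus \Delta_0 \vdash \f$, and $\Gamma_1 \uplus \Delta_0 \le \Gamma \uplus \Delta$, so again $\Gamma \uplus \Delta \vdash^{\ast} \f$.

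It remains to check the extremal property and deduce the ``furthermore''. Taking $\Delta = \Gamma$ shows $\vdash\ \subseteq\ \vdash^{\ast}$; and if $\vdash'$ is any monotone consequence relation with $\vdash\ \subseteq\ \vdash'$, then $\Gamma \vdash^{\ast} \f$ gives $\Delta \vdash \f$ hence $\Delta \vdash' \f$ for some $\Delta \le \Gamma$, and \MONO\ of $\vdash'$ gives $\Gamma \vdash' \f$; thus $\vdash^{\ast}\ \subseteq\ \vdash'$, so $\vdash^{\ast} = \vdash_m$. Finally let $\AS$ be a relevant presentation of $\vdash$, i.e.\ $\vdash\ =\ \vdash^{r}_{\!\AS}$, and recall from Proposition~\ref{prop:least-conseq} that $\vdash^{r}_{\!\AS}$ is the least consequence relation containing $\AS$ and $\vdash_{\!\AS}$ the least monotone one. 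From $\AS \subseteq\ \vdash^{r}_{\!\AS}\ =\ \vdash$ and minimality of $\vdash_{\!\AS}$ among monotone consequence relations containing $\AS$ we get $\vdash_{\!\AS}\ \subseteq\ \vdash_m$; conversely $\vdash_{\!\AS}$ is a consequence relation containing $\AS$, so $\vdash\ =\ \vdash^{r}_{\!\AS}\ \subseteq\ \vdash_{\!\AS}$, and since $\vdash_{\!\AS}$ is monotone, minimality of $\vdash_m$ gives $\vdash_m\ \subseteq\ \vdash_{\!\AS}$. Hence $\vdash_m\ =\ \vdash_{\!\AS}$, i.e.\ $\AS$ is a presentation of $\vdash_m$. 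The only genuinely non-clerical step is the \CUT\ case analysis above; all the rest is multiset bookkeeping.
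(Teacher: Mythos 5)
Your proof is correct, and it is essentially the intended argument: the paper omits the proof entirely (labelling it ``obvious''), and your explicit construction of $\vdash^{\ast}$ with the case split on $\Gamma_0(\p)$ in the \CUT\ verification, plus the two applications of the minimality clauses of Proposition~\ref{prop:least-conseq} for the ``furthermore'' part, is exactly the argument the authors have in mind. The only point worth flagging is one you already handled: the $\subseteq$ in the statement must be read as the submultiset order $\le$.
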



\section{An Example: \BCI\ and some related systems}
\label{sect:BCI-Example}

Let's take some time to consider some concrete examples of relevant consequence relations over some familiar logics in the vicinity of the relevant logic family. We've seen a bit of Abelian logic as an example, adding some justification to using multisets, but there are a number of better known systems we can study using these tools. Let's start with the implication fragment of the well-known substructural logic \BCI:

\begin{itemize}
\itemsep=0em
\item[(\textsf{I})] $\f\rightarrow\f$
\item[(\textsf{B})] $(\f\rightarrow\p)\rightarrow((\x\rightarrow\f)\rightarrow(\x\rightarrow\p))$
\item[(\textsf{C})] $(\f\rightarrow(\p\rightarrow\x))\rightarrow(\p\rightarrow(\f\rightarrow\x))$
\item[(mp)] $\f\rightarrow\p,\f\rhd\p$
\end{itemize}

This is a sublogic of very many systems, including the relevant logic \R, about which we'll have more to say shortly, as well as fuzzy logics, intuitionistic logic, and others. It's easy to extend this system by an intensional conjunction (or fusion) connective~$\circ$, with the additional axioms:

\begin{itemize}
\itemsep=0em
    \item[(Res$_1$)] $((\f\circ \p)\rightarrow \x)\rightarrow(\f\rightarrow(\p\rightarrow\x))$
    \item[(Res$_2$)] $(\f\rightarrow(\p\rightarrow \x))\rightarrow((\f\circ\p)\rightarrow\x)$
\end{itemize}

In general, we'll just refer to \BCI, and let context determine which connectives are around (for now, it's just~$\rightarrow$ and~$\circ$). First off, it's an obvious, nice feature of the definition of relevant tree proof that we can obtain a proof of the deduction theorem for \BCI\ and some relevant logics expanding it (as we'll see, there are some problems that come up for considering the standard relevant systems in their full vocabularies, but we can give this argument for at least some of the salient systems).

\begin{proposition}\label{p:DeductionTheorem}
$\Gamma,\f\vdash_{\BCI}^r\p\iff\Gamma\vdash_{\BCI}^r\f\rightarrow\p$.
\end{proposition}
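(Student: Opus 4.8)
The plan is to prove the two directions of the deduction theorem separately, working directly with relevant tree-proofs and exploiting the fact, established in Proposition~\ref{prop:least-conseq}, that $\vdash^r_{\BCI}$ is the least consequence relation containing the axioms and rules of \BCI.

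\medskip

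\textbf{Right-to-left direction ($\Leftarrow$).} Suppose $\Gamma\vdash^r_{\BCI}\f\rightarrow\p$. Since $\REFL$ gives $\f\vdash^r_{\BCI}\f$ and $\f\rightarrow\p,\f\vdash^r_{\BCI}\p$ holds because $(\mathrm{mp})$ is a rule of $\BCI$ (hence a consecution in $\vdash^r_{\BCI}$, which is a consequence relation containing $\BCI$), a single application of $\CUT$ to $\f\rightarrow\p,\f\vdash^r_{\BCI}\p$ and $\Gamma\vdash^r_{\BCI}\f\rightarrow\p$ yields $\Gamma\uplus[\f]\vdash^r_{\BCI}\p$, i.e.\ $\Gamma,\f\vdash^r_{\BCI}\p$. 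This direction uses nothing special about \BCI\ beyond the availability of modus ponens; it is the easy half.

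\medskip

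\textbf{Left-to-right direction ($\Rightarrow$).} This is the substantive half and the main obstacle. Suppose $T$ is a relevant tree-proof of $\p$ from $\Gamma\uplus[\f]$; I would construct, by induction on the structure of $T$, a relevant tree-proof of $\f\rightarrow\p$ from $\Gamma$. The classical strategy — for each node $\node n$ labelled $\p_\node n$, build a proof of $\f\rightarrow\p_\node n$ — must be adapted to respect multiset-counting of $\f$: the key invariant is that from the submultiset $\Gamma_\node n$ of leaf-labels above $\node n$ one produces a relevant proof of $\f\rightarrow\p_\node n$ from $\Gamma_\node n\Msetminus[\f]$ when $\f$ occurs among those leaves, and a relevant proof of $\p_\node n$ itself (or of $\f\rightarrow\p_\node n$ via weakening-free means) when it does not. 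Concretely: (i) at a leaf labelled $\f$ (one of the designated occurrences), take the axiom $(\mathsf I)$: $\f\rightarrow\f$; (ii) at a leaf labelled by some $\gamma\neq\f$ or by an axiom $\alpha$, one needs $\f\rightarrow\gamma$ from $[\gamma]$, which is \emph{not} available in \BCI\ (no weakening!) — so instead the invariant must be set up so that occurrences of $\f$ are tracked separately and a leaf not labelled $\f$ simply contributes itself, with the $\f\rightarrow(-)$ prefix introduced only along the branch that actually carries the relevant $\f$; (iii) at a modus ponens node with premises $\psi_1\rightarrow\psi_2$ and $\psi_1$ giving conclusion $\psi_2$, combine the sub-results using axioms $(\mathsf B)$ and $(\mathsf C)$ in the usual Curry-style manner — $(\mathsf B)$ lets one compose $\f\rightarrow\psi_1$ with $\psi_1\rightarrow\psi_2$, and $(\mathsf C)$ permutes the $\f$-argument into position — so that exactly one copy of $\f$ is discharged per branch on which it was used. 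The bookkeeping must ensure $\Lambda$ of the constructed tree equals $\Gamma_\node n\Msetminus[\f]$ together with axioms only, which is precisely what relevance demands.

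\medskip

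The delicate point, and where I expect to spend the most care, is the case split at modus ponens when $\f$ is used in \emph{one} of the two immediate subtrees but not the other: then one subtree yields $\f\rightarrow\psi_1$ (resp.\ $\f\rightarrow\psi_2$) and the other yields $\psi_1$ (resp.\ $\psi_1\rightarrow\psi_2$) plainly, and one must splice these with $(\mathsf B)$, $(\mathsf C)$, and modus ponens so that the single $\f$-prefix propagates correctly to the conclusion without duplicating or dropping it — this is exactly the counting that fails in naive treatments and is what makes the multiset formulation of the deduction theorem non-trivial. Once the inductive construction is complete, applying it at the root $\node r$, where $\Gamma_\node r=\Lambda_T$ and relevance of $T$ gives $\Lambda_T=(\Gamma\uplus[\f])\uplus\Delta$ for a multiset $\Delta$ of axioms of \BCI, yields a relevant tree-proof of $\f\rightarrow\p$ whose leaves are $\Gamma$ together with axioms only, i.e.\ $\Gamma\vdash^r_{\BCI}\f\rightarrow\p$. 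Alternatively — and this may be the cleaner route to write up — one proves the left-to-right direction abstractly from the consequence-relation axioms plus the specific \BCI-theorems $(\mathsf I),(\mathsf B),(\mathsf C)$ viewed as theorems of $\vdash^r_{\BCI}$, using $\RELCUT$ and $\THMREM$ from Lemma~\ref{l:OnCuts2} to manage the axiom-leaves; I would present whichever of these two equivalent arguments keeps the multiset-arithmetic least obtrusive.
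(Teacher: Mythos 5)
Your proposal is correct and follows essentially the same route as the paper's own (sketched) proof: the right-to-left half is the same trivial appeal to (mp) and \CUT, and the left-to-right half is the same induction on the derivation, splitting at the final application of (mp) according to which premise-subproof carries the tracked occurrence of $\f$, with the relevance bookkeeping absorbed by the fact that the newly introduced leaves are axioms. The only difference is cosmetic: you splice the two subproofs together using $(\mathsf{B})$ and $(\mathsf{C})$ directly in the implication fragment, whereas the paper packages the same combination step through fusion and the residuation axioms $(\mathrm{Res}_1)$, $(\mathrm{Res}_2)$.
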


\begin{proof}

The proof is straightforward, and is simplified by the fact that if $\Gamma\vdash_{\BCI}\f$ and $\Delta\vdash_{\BCI}\p$ then $\Gamma,\Delta\vdash_{\BCI}^r\f\circ\p$ (this is immediate given (Res$_1$)). We'll just sketch the proof and leave it to the reader to fill in details. From right to left is trivial, just relying on (mp), so for the converse, we'll proceed by induction on the complexity of proofs, supposing that $\Gamma,\f\vdash_{\BCI}^r\p$. The only rule is (mp), and so (disregarding some simple cases such as $\Gamma = \emptymultiset$ and $\f = \p$) the last step of such a derivation must be an application of this, and so there are some submultisets $\Gamma_1,\Gamma_2$ of $\Gamma\uplus[\f]$ such that one of these entails $\x\rightarrow\p$ and the other $\x$. $\f$ may occur in either $\Gamma_1$ or $\Gamma_2$, but the particular occurrence highlighted in the presupposition must have a unique occurrence in one of them, and so depending on where it occurs, either $\Gamma_1'\vdash_{\BCI}^r\f\rightarrow(\x\rightarrow\p)$ or $\Gamma_2'\vdash_{\BCI}^r\f\rightarrow\x$, where $\Gamma_i'$ is $\Gamma_i$ with one fewer occurrence of $\f$. In one case, we just need to appeal to the fact that $\vdash_{\BCI}((\f\rightarrow\x)\circ(\x\rightarrow\p))\rightarrow(\f\rightarrow\p)$, and for the other case (Res$_2$) will do.
\end{proof}

This first fact indicates that, at least in some cases, getting what we want: that is, that our definition results in a match between the \emph{external} consequence relation (of the kind we've defined) and the \emph{internal} consequence relation, given by provable implication in the logic. \BCI\ is a particularly nice logic, and this result extends to such extensions as the `intensional' fragment of \textbf{R} (including implication and multiplicative conjunction and disjunction). However, there are limitations, related to our use of multisets rather than some more discerning kind of structure. First we'll note that the above result fails in some logics weaker than \BCI, and then go on to discuss issues with incorporating the lattice conjunction of \textbf{R}, which is usually (for reasons we'll mention) formulated as a meta-rule.  

For the first point, note about the inclusion of the permutation axiom (\textsf{C}) that it ensures that $(\p\circ(\f\circ\x))\rightarrow((\p\circ\f)\circ\x)$ is derivable in \BCI. This, along with the prefixing axiom (\textsf{B}), ensures that $\circ$ is associative (hence showing that \BCI\ is an extension of the associative Lambek calculus, with the left division as a notational variant of the right). Note, however, that $\circ$ may not associative in the case in logics without (\textsf{C}), and that this has a substantial consequence for the deduction theorem just proved. Consider the relevant logic \T$_{\rightarrow,\circ}$, the implication/fusion fragment of \T, which can be axiomatised by replacing (\textsf{C}) with the suffixing axiom (\textsf{B}$'$) and the contraction axiom (\textsf{W}):
\begin{itemize}
\itemsep=0em
\item[(\textsf{B}$'$)] $(\f\rightarrow\p)\rightarrow((\p\rightarrow\x)\rightarrow(\f\rightarrow\x))$
    \item[(\textsf{W})] $(\f\rightarrow(\f\rightarrow\p))\rightarrow(\f\rightarrow\p)$ 
\end{itemize}

The failure of the associativity of $\circ$ gives rise, in $\T_{\rightarrow,\circ}$, to a remarkable failure of the deduction theorem, given our definitions, as we can see with a simple matrix argument. We have one direction immediately, given the inclusion of (mp), but the converse direction fails.

\begin{proposition}\label{p:onT}
$\f\rightarrow\p\vdash^{\mathit{r}}_{\T_{\rightarrow,\circ}}(\f\circ\x)\rightarrow(\p\circ\x)$ \; and \; $\nvdash_{\T_{\rightarrow,\circ}}(\f\rightarrow\p)\rightarrow((\f\circ\x)\rightarrow(\p\circ\x))$.
\end{proposition}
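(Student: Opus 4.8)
The statement has a positive half and a negative half, which I would treat by quite different means.

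For the positive half, $\f\rightarrow\p\vdash^{\mathit{r}}_{\T_{\rightarrow,\circ}}(\f\circ\x)\rightarrow(\p\circ\x)$, I would just exhibit an explicit relevant tree-proof in which the single premise $\f\rightarrow\p$ labels exactly one leaf and every other leaf is an axiom. Begin with the $(\textsf{I})$-instance $(\p\circ\x)\rightarrow(\p\circ\x)$; one $(\text{mp})$ against the $(\text{Res}_1)$-instance $((\p\circ\x)\rightarrow(\p\circ\x))\rightarrow(\p\rightarrow(\x\rightarrow(\p\circ\x)))$ gives the theorem $\p\rightarrow(\x\rightarrow(\p\circ\x))$. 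Next, $(\text{mp})$ on the premise $\f\rightarrow\p$ and the $(\textsf{B}')$-instance $(\f\rightarrow\p)\rightarrow((\p\rightarrow(\x\rightarrow(\p\circ\x)))\rightarrow(\f\rightarrow(\x\rightarrow(\p\circ\x))))$ yields $(\p\rightarrow(\x\rightarrow(\p\circ\x)))\rightarrow(\f\rightarrow(\x\rightarrow(\p\circ\x)))$; a further $(\text{mp})$ with the theorem above gives $\f\rightarrow(\x\rightarrow(\p\circ\x))$; and a last $(\text{mp})$ against the $(\text{Res}_2)$-instance $(\f\rightarrow(\x\rightarrow(\p\circ\x)))\rightarrow((\f\circ\x)\rightarrow(\p\circ\x))$ produces the root $(\f\circ\x)\rightarrow(\p\circ\x)$. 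The formula $\f\rightarrow\p$ (not an axiom when $\f,\p$ are distinct, and the tree still works when $\f=\p$) labels exactly one leaf, so conditions~(4) and~(R) of Definition~\ref{d:tree-proof-multisets} both hold and the tree is relevant. (Via the easy $(\text{mp})$-direction of a would-be deduction theorem, this would also follow from $\vdash_{\T_{\rightarrow,\circ}}(\f\rightarrow\p)\rightarrow((\f\circ\x)\rightarrow(\p\circ\x))$ --- exactly what the negative half rules out.)

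For the negative half, $\nvdash_{\T_{\rightarrow,\circ}}(\f\rightarrow\p)\rightarrow((\f\circ\x)\rightarrow(\p\circ\x))$, I would argue by soundness: produce a matrix $\mat M$ --- an algebra in the signature $\{\rightarrow,\circ\}$ with a designated subset --- validating $(\textsf{I})$, $(\textsf{B})$, $(\textsf{B}')$, $(\textsf{W})$, $(\text{Res}_1)$, $(\text{Res}_2)$ and closed under $(\text{mp})$, together with a valuation of $\f,\p,\x$ on which $(\f\rightarrow\p)\rightarrow((\f\circ\x)\rightarrow(\p\circ\x))$ is undesignated. The search is constrained as the discussion preceding the proposition suggests: since $(\f\rightarrow\p)\circ\f$ always yields $\p$ (immediate from $(\text{Res}_2)$ and $(\textsf{I})$) and $\circ$ is monotone, associativity of $\circ$ would give $(\f\rightarrow\p)\circ(\f\circ\x)=((\f\rightarrow\p)\circ\f)\circ\x\leq\p\circ\x$, making the displayed implication a theorem after all. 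So $\mat M$ must be a residuated structure in which $\circ$ fails to be associative --- which is not precluded in $\T_{\rightarrow,\circ}$ precisely because it lacks the permutation axiom $(\textsf{C})$ (recall that $(\textsf{C})$, together with $(\textsf{B})$, was what delivered associativity of $\circ$ in $\BCI$). Concretely I would look among small, ideally linearly ordered, residuated groupoids validating prefixing, suffixing, and contraction but not full associativity, fix a two- or three-element valuation, and compute the relevant values.

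The real obstacle is this matrix. The positive half is a mechanical unwinding of the axiom schemes, but exhibiting a concrete, small, non-associative residuated structure that simultaneously validates all six axiom schemes, is closed under $(\text{mp})$, and refutes $(\f\rightarrow\p)\rightarrow((\f\circ\x)\rightarrow(\p\circ\x))$ under some valuation --- and then verifying each of these conditions --- is where essentially all the work lies. Once $\mat M$ and a falsifying valuation are in hand, non-theoremhood follows at once from soundness of $\T_{\rightarrow,\circ}$ over its matrix models.
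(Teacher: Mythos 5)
Your positive half is fine---indeed more explicit than the paper, which dismisses that direction as ``straightforward'': your tree uses only $(\textsf{I})$, $(\text{Res}_1)$, $(\textsf{B}')$, $(\text{Res}_2)$ and $(\text{mp})$, the premise $\f\rightarrow\p$ labels exactly one leaf, and conditions (4) and (R) of Definition~\ref{d:tree-proof-multisets} are both satisfied. Your diagnosis of the negative half is also exactly the paper's strategy: refute the formula in a matrix sound for $\T_{\rightarrow,\circ}$, observing that any such matrix must fail associativity of $\circ$, which is possible only because $(\textsf{C})$ is absent.

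But for the negative half you stop precisely where the proof has to begin. The claim $\nvdash_{\T_{\rightarrow,\circ}}(\f\rightarrow\p)\rightarrow((\f\circ\x)\rightarrow(\p\circ\x))$ is an existence claim about a countermodel, and ``I would look among small, ideally linearly ordered, residuated groupoids'' is a search plan, not a witness; everything you write before that point only narrows the search space, it does not show the search succeeds. The paper exhibits the witness (found with Slaney's \texttt{MaGIC}): a four-element lattice with bottom $0$, top $3$ and \emph{incomparable} middle elements $1,2$ (so your preference for chains is not how the known example goes), sole designated value $3$, and explicit tables for $\rightarrow$ and $\circ$. Under $\f\mapsto 2$, $\p\mapsto 0$, $\x\mapsto 1$ one computes $\f\rightarrow\p=1$, $\f\circ\x=1$, $\p\circ\x=0$, hence $(\f\circ\x)\rightarrow(\p\circ\x)=1\rightarrow 0=0$ and the whole formula takes the undesignated value $1\rightarrow 0=0$. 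Until such a matrix is actually produced and verified to validate all six axiom schemes and be closed under $(\text{mp})$, the negative half---which is the entire mathematical content of this proposition---remains unproved.
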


\begin{proof}
The positive part is straightforward, so we'll just present the matrix to show the negative part: the desired valuation has $\f\mapsto2,\p\mapsto0,\x\mapsto1$, where $\f$ is true on a valuation $v$ just when $v(\f)=3$.\footnote{This matrix was found using Slaney's \texttt{MaGIC}. For information, and to download the software, see \url{http://users.cecs.anu.edu.au/~jks/magic.html}.}
\begin{center}
\begin{tikzpicture}[baseline=8mm]
  \node (0) at (0,0) {0};
  \node (1) at (1,1) {2};
  \node (2) at (-1,1) {1};
  \node (3) at (0,2) {3};
  
  \draw (0) -- (2) -- (3) ;
  \draw (0) -- (1) -- (3);
\end{tikzpicture}
\hspace{15mm}
\begin{tabular}{c|cccccc}
$\rightarrow$ & 0 & 1 & 2 & 3 \\\hline
            0 & 3 & 3 & 3 & 3 \\
            1 & 0 & 3 & 0 & 3 \\
            2 & 1 & 1 & 3 & 3 \\
            3 & 0 & 1 & 0 & 3 \\
\end{tabular}
\hspace{10mm}
\begin{tabular}{c|cccccc}
      $\circ$ & 0 & 1 & 2 & 3 \\\hline
            0 & 0 & 0 & 0 & 0 \\
            1 & 0 & 1 & 0 & 1 \\
            2 & 0 & 1 & 2 & 3 \\
            3 & 0 & 1 & 2 & 3 \\
\end{tabular}
\end{center}
\end{proof}

\noindent
This fact also holds for \E$_{\rightarrow,\circ}$\/, which retains a (very) weak form of permutation but, like \T$_{\rightarrow,\circ}$,  doesn't validate $\f\circ(\p\circ\x)\rightarrow(\f\circ\p)\circ\x$ and, hence, is not \emph{fully} associative. However, the famous relevant logic $\textbf{R}_{\rightarrow,\circ}$, which extends \BCI\ by (\textsf{W}) does enjoy the deduction theorem (and note, \BCI\ is the implication-fusion fragment of the contraction-free relevant system \textbf{RW}). 

Among the usual relevant logics studied in \cite{AB, Routleyetal1982}, \R\; and \textbf{RW} are more or less the upper limit of deductive strength, and most of the other systems do not have (\textsf{C}), so among the usual family of relevant logics, $\vdash^{r}$ only `externalises' the internal consequence relation (that expressed by the provability of a formula of the form $\f_1\circ\dots\circ\f_n\rightarrow\p$) of a handful of (famous, well-studied) systems. For the others, it seems that even the use of multisets brings in too much structure, and that a move to sequences or even trees (as the data types of premises) is necessary. This is a natural avenue for future research, but we leave it aside, focusing on the simple case with multisets.

On to the second point, in order to add a \emph{lattice} conjunction $\land$ to the relevant systems of \cite{AB, Routleyetal1982}, it's not adequate just to use axioms and inference rules, as we've done so far. In particular, these systems have it that if $\f$ and $\p$ are both derivable from no premises, then $\f\land\p$ is so derivable. Importantly, this fact is not reflected in an axiom, such as the usual $\f\rightarrow(\p\rightarrow(\f\land\p))$, as this just delivers $\f\rightarrow(\p\rightarrow\f)$ in that context, the avoidance of which is one of the main aims of the relevant enterprise (and the fact that $(\f\land\p)\rightarrow(\f\land\p)$ is derivable does us no good at all). Similarly, using an inference rule won't help, as then, given \CUT, if we had $\f\,\p\rhd\f\land\p$, we'd have $\f\land\p\vdash^r\f$ and thus $\f,\p\vdash^r\f$, buying us back all the monotonicity we have worked to avoid. What we need to add will have the effect of what Smiley \cite{Smiley63} called a \emph{rule of proof}: something of the form ``if $\vdash\f_1,\dots,\vdash\f_n$ then $\vdash\p$''. This is the form of a simple kind of \emph{metarule}, with zero premises, and perhaps the most famous example is the \emph{necessitation} rule of the local consequence relations of modal logics. The usual rule of \emph{adjunction} in relevant logics is given in the form ``if $\vdash_{\textbf{R}}\f$ and $\vdash_{\textbf{R}}\p$, then $\vdash_{\textbf{R}}\f\land\p$'', and as mentioned this is for a good reason. In some relevant logics much weaker than \textbf{R}, such as those studied in \cite{Routleyetal1982}, we need to add further rules of proof (such as rule versions of prefixing, suffixing, and contraposition).

So, the full relevant systems do not fall under the definition of ``axiom system'' we gave before without some tweaking -- for instance, by taking all the \emph{theorems} of some other presentation of $\R$ and making \emph{that} the set of axioms and using the rule (mp) we could do it (though that smells a little of cheating, it is an option).\footnote{One more system in the vicinity worth mentioning is \R-mingle, or \RM. This can be obtained from \R\; by adding the axiom:
\begin{itemize}
    \item[(\textsf{M})] $\f\rightarrow(\f\rightarrow\f)$
\end{itemize}

\noindent Adding this has the effect of making it the case that $\f,\f\vdash_{\RM_{\rightarrow,\circ}}^r\f$ is valid, in effect allowing us to \emph{multiply} occurrences of premises; combined with (\textsf{W}), allowing us to prove $\f\vdash^r\f\circ\f$, and so allowing us to contract occurrences of premises, this `undoes' the multiset effect, leaving us with, in effect, sets as the data type. This is part of a wider point, that particular axioms and rules will have the effect of collapsing distinctions our setup has been fine-tuned to draw (as we've also seen with adding $\f,\p\rhd\f\land\p$ to systems, and buying back monotonicity that way). This just highlights the point, mentioned earlier, that the use criterion does not pin down systems which are `relevant' in any especially rich sense, but rather just tells you when a derivation in a system is `relevant', even if the system is itself irrelevant in some other senses.} 
The point of this discussion of limitations is to suggest further avenues for improving on the proposal we give here, and motivates the claim that doing so is desirable to capture a wider range of relevant logics (with all their connectives). This task we leave for another occasion, focusing on the simpler setting of multiset-consquence. 

\section{Symmetric Relevant Consequence Relations}


With our adaptation of Tarskian consequence settled, a natural next step is toward multiple conclusion (or ``Scott'') consequence relations. As mentioned, the approach we'll take here is inspired by Bolzano, employing a `conjunctive' reading of the conclusion.\footnote{Note that this feature of our approach causes it to differ from another approach to multiple conclusion in a relevant setting, taking the premises to be combined in accordance with multiplicative conjunction $\circ$ and the conclusions in accordance with multiplicative disjunction $+$, as in \cite[p. 221]{DunnH}. One of the issues with this approach in general is that while in some relevant logics, such as \textbf{R}, $+$ is definable, in other systems where no such definition is possible there is no general consensus on what the properties of $+$ ought to be (though see \cite{Read} for a proof theoretic approach).} This is simple, and in keeping with a common approach (for instance in \cite{Font}), so we'll adopt it, leaving investigations into other options for future work. 

The rough picture we're trying to capture is that a collection of premises should have a collection of conclusions as consequences just in case each conclusion is a consequence of some of the premises (enforcing the relevance constraints we have taken on board in the single conclusion case, so that each premise is used in order to obtain \emph{some} conclusion or other). This is useful as a heuristic but it doesn't pin down any particular formal representation as yet. One choice point concerns when to take a multiset of premises to have the \emph{empty} (multi)set of conclusions as a consequence. In the disjunctive reading, familiar from Gentzen systems, an empty conclusion expresses the \emph{falsum}, but this seems inappropriate for the conjunctive reading. Should we take it that only the empty (multi)set of formulas has itself as a consequence? This approach has the advantage of simplicity, but it is quite out of step with the usual, monotone, case. In that case, if a set of premises has the empty set as a consequence, then it also has every theorem as a consequence -- this follows immediately from the usual definition. Without monotonicity, however, this is not a consequence of the definition, unless we build it in explicitly. To a certain extent, the choice here is arbitrary, but so far we have worked with consequence relation notions that collapse into the Tarskian versions when we reimpose the Tarskian conditions: taking this on board as a desideratum suggests that the treatment of empty conclusions should be similar.  

Let's get to the definitions, starting from a relatively simple case. If $\vdash$ is a monotone and contractive consequence relation, we can define a symmetrization $\vdash^s$ in a simple way, letting $\Gamma\uplus[\vectn{\chi}]$ be a finite multiset of formulas:
$$
\Gamma \vdash^s [\vectn{\chi}]  \qquad\text{iff} \qquad \Gamma \vdash \x_i \text{ for each }\ i\leq n \\ 
$$
Note that this definition has ``right-side''
contraction built in; indeed we obviously have:\footnote{We'll adopt analogous notational conventions erasing $\uplus$ symbols and square brackets from multisets on the right, we will however keep square brackets whenever the right-hand side is a finite (potentially one-element or even empty) multiset given by list (in order to stress the multi-conclusion nature of the involved symmetric consequence relation).}
\begin{itemize}
\item If\/ $\Gamma\vdash^s \Delta,\f,\f$, then $\Gamma\vdash^s \Delta,\f$. \hfill \text{r-}\CONT
\end{itemize}
(Clearly we do not have
right-side
monotony, just in virtue of the conjunctive reading of the conclusions. This holds even in the standard case.)

Removing contraction but keeping monotony makes things a bit more complicated, but still rather intuitive. The basic idea is to pay attention to what parts of $\Gamma$ are needed to derive each $\chi_i$: so we `partition' $\Gamma$ into the multiset union of a collection $\Gamma_1,\dots,\Gamma_i$ of multisets, requiring that each $\chi_i$ is such that $\Gamma_i\vdash\chi_i$ holds for some $\Gamma_i$. So we don't build in contraction, as we're paying attention to how often each premise among $\Gamma$ is used, even when multiple copies may be used to obtain different $\chi_i$'s among the conclusions. Stated formally:
\begin{align*}
\Gamma \vdash^s [\vectn{\chi}]  \qquad\text{iff} \qquad 	& 
										  \text{for each $i \leq n$ there is } \Gamma_i \le \Gamma  \text{ such that } \Gamma_i \vdash \x_i \text { and } \Gamma_1\uplus\dots\uplus\Gamma_n \le \Gamma 
\end{align*}

\noindent
In full generality  we will obviously want the relation of equality instead of submultisethood to obtain between the set of premises the multiset union of its subsets needed to prove each of the conclusions, note however that such a definition would entail that $\Gamma \vdash^s \emptymultiset$ holds iff $\Gamma = \emptymultiset$. As mentioned earlier, however, this is out of step with the usual, Tarskian, case. So we'll build in the usual behaviour (that some premises $\Gamma$ have $\emptymultiset$ as a consequence just in case $\Gamma$ already has every theorem as a consequence). These considerations give rise to the following definition:
\begin{align*}
\Gamma \vdash^s [\vectn{\chi}]  \qquad\text{iff} \qquad 	& 
										  \text{for each $i \leq n$ there is } \Gamma_i \le \Gamma  \text{ such that } \Gamma_i \vdash \x_i \text { and } \Gamma_1\uplus\dots\uplus\Gamma_n = \Gamma \\
\Gamma \vdash^s \emptymultiset  \qquad\text{iff} \qquad 	&  \Gamma \vdash \x \text{ for each theorem } \x \text{ of } \vdash 
\end{align*}
\noindent
It is easy to see that by adding the monotony (and contraction), this general definition gives rise to the more standard one with which we started. With our intuitions fixed and formally expressed, let us formulate the essential properties of symmetric consequence relations.

\begin{definition}\label{def:SCR}
A \emph{symmetric} consequence relation on a set of formulas $\form$ is a binary relation ${\vdash}$ on finite multisets of formulas  obeying the following conditions for each finite multiset $\Gamma\uplus\Delta\uplus\Psi$ of formulas:
\begin{itemize}
\item  $\Gamma\vdash\Gamma$. \hfill \REFL

\item If\/ $\Gamma\vdash \Delta$ and $\Delta\vdash \Psi$, then $\Gamma\vdash \Psi$. \hfill \TRAN

\item If\/ $\Gamma\vdash \Delta$, then $\Gamma,\Psi\vdash \Delta,\Psi$. \hfill \COMP

\end{itemize}
We add the prefix \emph{monotone} if furthermore: 
\begin{itemize}
\item If\/ $\Gamma\vdash \Delta$, then $\Gamma,\Psi\vdash \Delta$. \hfill \MONO
\end{itemize}
We add the prefix \emph{contractive} if furthermore:  
\begin{itemize}
\item If\/ $\Gamma,\p,\p\vdash \Delta$, then $\Gamma,\p\vdash \Delta$. \hfill \CONT
\item If\/ $\Gamma\vdash \Delta,\p,\p$, then $\Gamma\vdash \Delta,\p$. \hfill \text{r-}\CONT
\end{itemize}
\end{definition}

Let us the define the asymmetric variant $\vdash^a$ of a symmetric consequence relation $\vdash$ as follows: 
$$
\Gamma \vdash^{a} \f \qquad\text{iff}\qquad \Gamma \vdash [\f].
$$ 
\noindent 
With this, we can prove the following proposition, showing that that symmetric and asymmetric variants of consequence relations interact as one might hope.
\begin{proposition}\label{p:OnSymmetrization}\mbox{}
\begin{itemize}
\item[(i)] If\/ $\vdash$ is a (monotone and/or contractive) consequence relation, then $\vdash^s$ is (monotone and/or contractive) symmetric consequence relation and
$$
{\vdash} = {(\vdash^s)^a}
$$
\item[(ii)] If\/ $\vdash$ is a (monotone and/or contractive) symmetric  consequence relation, then the relation $\vdash^a$ is a (monotone and/or contractive) consequence relation  and
$$
{\vdash} \supseteq {(\vdash^a)^s}
$$
\item[(iii)] If\/ $\vdash$ is a monotone and contractive symmetric consequence relation, then 
$$
{\vdash} = {(\vdash^a)^s}
$$
\end{itemize}
\end{proposition}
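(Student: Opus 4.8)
The plan is to prove all three parts by unwinding the definitions of $\vdash^s$ and $\vdash^a$ and checking the closure conditions of Definition~\ref{def:SCR} (resp.\ Definition~\ref{def:CR}) one at a time. The two workhorses will be: on the symmetric side, the manoeuvre of applying \COMP\ to a single-conclusion consecution $\Gamma'\vdash[\chi]$ with the remaining context as $\Psi$ and then chaining with \TRAN, which lets one build a multiple-conclusion consecution out of several single-conclusion ones; and, on the asymmetric/decomposition side, the \RELCUT\ and \THMREM\ principles of Lemma~\ref{l:OnCuts2}, which let one collapse a decomposition of the left-hand side into a single derivation. Since the empty right-hand side is governed by its own clause in the definition of $\vdash^s$, it will have to be carried along as a separate case at essentially every step.

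For (i): the identity ${\vdash}={(\vdash^s)^a}$ is basically definitional, since $\Gamma\mathrel{(\vdash^s)^a}\f$ unfolds to $\Gamma\vdash^s[\f]$ and the only admissible one-block decomposition of $\Gamma$ is $\Gamma$ itself, so this reads exactly $\Gamma\vdash\f$. To see $\vdash^s$ is a symmetric consequence relation: for \REFL\ ($\Gamma\vdash^s\Gamma$) decompose $\Gamma=[\chi_1,\dots,\chi_n]$ into the singletons $\Gamma_i=[\chi_i]$ and use \REFL\ of $\vdash$ (the empty case holding since $\emptymultiset$ proves every theorem); for \COMP, take a decomposition witnessing $\Gamma\vdash^s\Delta$ and append the trivial singleton pieces $[\psi_j]\vdash\psi_j$ for $\psi_j\in\Psi$; for \TRAN, from $\Gamma\vdash^s\Delta$ and $\Delta\vdash^s\Psi$ one has $\Gamma=\Gamma_{\delta_1}\uplus\cdots\uplus\Gamma_{\delta_m}$ with $\Gamma_{\delta}\vdash\delta$ (over the occurrences $\delta$ of $\Delta$) and $\Delta=\Delta_1\uplus\cdots\uplus\Delta_k$ with $\Delta_k\vdash\psi_k$, and for each $k$ one applies \RELCUT\ to $\Delta_k\vdash\psi_k$ together with the $\Gamma_\delta\vdash\delta$ for the occurrences $\delta$ making up $\Delta_k$, obtaining a block that proves $\psi_k$; these blocks sum over $k$ to $\Gamma$. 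Monotonicity of $\vdash$ is transferred to $\vdash^s$ by absorbing the extra premises into one block of a decomposition (using \MONO\ of $\vdash$, and likewise in the empty-conclusion clause), and contraction is transferred by applying \CONT\ of $\vdash$ inside the block that carries the duplicated premise.

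For (ii): $\vdash^a$ inherits \REFL\ from \REFL\ of $\vdash$, and it satisfies \CUT\ because from $\Delta\vdash[\p]$ one gets $\Delta\uplus\Gamma\vdash\Gamma\uplus[\p]$ by \COMP\ (with $\Psi=\Gamma$) and then $\Gamma\uplus\Delta\vdash[\f]$ by \TRAN\ with $\Gamma\uplus[\p]\vdash[\f]$; monotonicity and contraction of $\vdash^a$ are the restrictions of \MONO\ and \CONT\ of $\vdash$ to singleton right sides. For $(\vdash^a)^s\subseteq{\vdash}$ one takes a decomposition $\Gamma=\Gamma_1\uplus\cdots\uplus\Gamma_n$ with $\Gamma_i\vdash[\chi_i]$ and reassembles $\Gamma\vdash[\chi_1,\dots,\chi_n]$ by a cascade of \COMP\ steps (carrying the not-yet-processed $\Gamma_j$'s and the already-produced $\chi_j$'s as side context) interleaved with \TRAN; the empty-conclusion clause uses that the theorems of $\vdash^a$ are exactly the $\theta$ with $\emptymultiset\vdash[\theta]$. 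For (iii): $\supseteq$ is (ii); for $\subseteq$, using that $\vdash$ is now monotone and contractive, one first shows $\Gamma\vdash\Delta$ is equivalent to $\Gamma\vdash[\delta]$ for each $\delta\in\Delta$ — one direction via \TRAN\ with $\Delta\vdash[\delta]$ (which follows from \REFL\ by \MONO), the other by building up through \COMP/\TRAN\ and collapsing the duplicated context with \CONT\ — and then produces a decomposition of $\Gamma$ witnessing $\Gamma\mathrel{(\vdash^a)^s}\Delta$.

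The main obstacle, in my view, is not any one of the axiom checks but the bookkeeping and the empty-conclusion clause. The \TRAN\ verification for $\vdash^s$ is where the choice of ``$=$'' rather than ``$\le$'' in the definition and the role of axioms/theorems really bite, and one must be careful that cutting the inner blocks into the outer decomposition yields multisets that sum back exactly to $\Gamma$. The empty conclusion $\Gamma\vdash^s\emptymultiset$ behaves quite unlike a nonempty conclusion — it is \emph{not} produced by a decomposition — so \COMP\ (in the shape $\Gamma\vdash^s\emptymultiset\Rightarrow\Gamma,\Psi\vdash^s\Psi$) and the inclusions in (ii)–(iii) need separate arguments for it, and in edge cases where $\vdash$ has few theorems some extra care is needed about exactly when $\Gamma\vdash^s\emptymultiset$ ought to hold. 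The reassembly step in (iii), turning $\Gamma\vdash\Delta$ into an actual sub-multiset decomposition of $\Gamma$, is the most technical point and is where contraction of $\vdash$ is essential.
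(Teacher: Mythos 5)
Your treatment of (i) and (ii) tracks the paper's proof essentially step for step: transitivity of $\vdash^s$ via \RELCUT\ applied blockwise to the inner decomposition (with the empty cases split off), \CUT\ for $\vdash^a$ via a single application of \COMP\ followed by \TRAN, and the inclusion $(\vdash^a)^s\subseteq{\vdash}$ by the cascading \COMP/\TRAN\ reassembly; your explicit attention to the empty-conclusion clause is, if anything, more careful than the paper's.

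The genuine problem is the final step of (iii). You correctly reduce $\Gamma\vdash\Delta$ to $\Gamma\vdash[\delta]$ for each $\delta\in\Delta$ (exactly as the paper does, via \REFL, \MONO\ and \TRAN), but you then propose to ``produce a decomposition of $\Gamma$ witnessing $\Gamma\mathrel{(\vdash^a)^s}\Delta$'', with contraction doing the reassembly. No such decomposition exists in general. Take $\vdash$ to be the monotone, contractive symmetric consequence relation of classical logic, $\Gamma=[p]$ and $\Delta=[p,p]$: \REFL\ gives $[p,p]\vdash[p,p]$ and left \CONT\ gives $[p]\vdash[p,p]$, yet any decomposition $\Gamma_1\uplus\Gamma_2=[p]$ with $\Gamma_i\vdash^a p$ forces $\emptymultiset\vdash^a p$, which fails when $p$ is not a theorem. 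Contraction deletes duplicate premises; it cannot split a single occurrence of a premise across two blocks. The correct move --- and the one the paper makes --- is to note that since $\vdash^a$ is monotone and contractive, the operative definition of $(\vdash^a)^s$ is the simple one from the start of the section ($\Gamma\mathrel{(\vdash^a)^s}[\chi_1,\dots,\chi_n]$ iff $\Gamma\vdash^a\chi_i$ for each $i$), under which the facts $\Gamma\vdash[\delta]$ you have already established finish the proof immediately, with no decomposition and no further use of contraction at that point. The gap is localized and repairable, but the step as you describe it would fail.
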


\begin{proof}
(i): In the proof of the first part (i), only transitivity poses a potential problem: let us assume that $\Gamma \vdash^s [\vectk{\delta}]$ and 
$[\vectk{\delta}] \vdash^s [\vectn{\p}]$ and show that $\Gamma \vdash^s [\vectn{\p}]$.

Let us first deal with the case where $k=0$: if $n = 0$ as well then the claim is trivial. Otherwise from the second assumption we know that the $\p_i$s are theorems of $\vdash$ and so, due to the first assumption, $\Gamma\vdash \p_1$ holds and thus fixing $\Gamma_1 = \Gamma$ and $\Gamma_{i+1} = \emptymultiset$ for each $i < n$, we obtain the desired partition of $\Gamma$, which completes the proof. 

Next assume that $k > 0$ and $n =0$: then from the second assumption we know that $\vectk{\delta}\vdash \x$ holds for each theorem $\x$ of $\vdash$
and, due to the first assumption, there are multisets $\vectk{\Gamma}$ such that $\Gamma_i \vdash \delta_i$, and $\Gamma_1\uplus\dots\uplus\Gamma_k = \Gamma$. Therefore due to \RELCUT\ we know that $\Gamma\vdash \x$ as required. 

The case where $k > 0$ and $n > 0$ is only a bit more complex:  we know that there are multisets $\vectk{\Gamma}$ and  $\vectn{\Delta}$ such that  $\Gamma_1\uplus\dots\uplus\Gamma_k = \Gamma$ and $\Delta_1\uplus\dots\uplus\Delta_n = [\vectk{\delta}]$ and for each $i\leq k$ and $j\leq n$ we have
$\Gamma_i \vdash \delta_i$ and $\Delta_j \vdash \psi_j$. Without loss of generality, we can assume that there is a non-decreasing sequence $0 = s^1 \leq s^2 \leq \dots \leq s^k < k$ of integers and a sequence $n^1, \dots, n^{k}$ of non-negative integers such that $\Delta_j = [\delta_{s^j+1},\dots, \delta_{s^j + n^j}]$ (note that this includes the possibility that $n = 0$, i.e., $\Delta_j = \emptymultiset$). To complete the proof it suffices to set $\Gamma_j = \Gamma_{s^{j}+1} \uplus \dots \uplus \Gamma_{s^j + n^j}$.

The monotone and/or contractive conditions are easily checked and rest of the proof of the first claim is straightforward, so we have that: 
$$
\Gamma \vdash \f \qquad\text{iff}\qquad \Gamma \vdash^s [\f] \qquad\text{iff}\qquad \Gamma \mathrel{(\vdash^s)^{a}} \f
$$ 

(ii): For the first part of the proof we will show, on the assumption that $\vdash$ is a symmetric consequence relation,  that  $\vdash^a$  satisfies \CUT \ and \REFL. The second part is immediate, since $\f \vdash \f$ holds by the hypothesis on $\vdash$. So assume that \/ $\Gamma, \p\vdash^a \f$ and $\Delta\vdash^a \p$, in order to show that $\Gamma, \Delta\vdash^a \f$ or, equivalently, that $\Gamma,\Delta\vdash \f$. We have that $\Gamma,\p\vdash \f$ and $\Delta\vdash \p$, and by \COMP, it follows that $\Gamma,\Delta\vdash \Gamma, \p$ and, hence, one application of \TRAN\ gives that $\Gamma, \Delta\vdash \f$. Checking \MONO\ and \CONT\ are also simple exercises left to the reader.

Finally, we verify that ${\vdash} \supseteq {(\vdash^a)^s}$. By definition,  $\Gamma \mathrel{(\vdash^a)^{s}} [\vectn{\chi}]$ only if 
$\text{for each $i \leq n$ there is } \Gamma_i \le \Gamma  \text{ such that } \Gamma_i \vdash^a \x_i \text { and } \Gamma_1\uplus\dots\uplus\Gamma_n = \Gamma$. The latter, in turn, implies that for each such $\Gamma_i$, we have that $\Gamma_i \vdash \x_i$. Now, given $\Gamma_2 \vdash \x_2$, by \COMP, we have that $\x_1,\Gamma_2 \vdash [\x_1, \x_2]$, and similarly since $\Gamma_1 \vdash [\x_1]$, we may obtain that $\Gamma_1, \Gamma_2 \vdash \x_1, \Gamma_2$   and hence, by \TRAN, we get $\Gamma_1, \Gamma_2 \vdash [\x_1, \x_2]$.   Proceeding in this manner, it follows that  $\Gamma_1\uplus\dots\uplus\Gamma_n = \Gamma \vdash [\vectn{\chi}]$, which is what we wanted.

(iii): In the presence of monotonicity and contraction, our definition of the $\vdash^s$ relation is the one at the beginning of the section. We can see that in this case, $\x_i \vdash \x_i$ by \REFL, and then $[\vectn{\chi}] \vdash [\x_i]$ by \MONO. Hence,  $\Gamma \vdash [\vectn{\chi}]$ implies that $\Gamma \vdash [\x_i]$ by \TRAN, so we have  that for each $i \leq n$, $\Gamma \vdash^a \x_i$  and hence, $\Gamma \mathrel{(\vdash^a)^{s}} [\vectn{\chi}]$ as desired.
\end{proof}


Let us consider two examples of different ways of symmetrizing an asymmetric consequence relation, and ways these options can diverge. In so doing, we'll show that the converse inclusion of point (ii) of the previous proposition can fail.

\begin{example}
We stated point (ii) in the previous proposition as an inclusion rather than an equality, and we can give a simple counterexample to the converse inclusion. Consider a singleton set of formulas $\form = \{x\}$ and a relation $\vdash$ on multisets on $A$ defined $\Gamma \vdash \Delta$ iff $\Gamma = \Delta$ or $\Gamma(x) > \Delta(x) \geq 2$. Clearly 
\begin{itemize}
\item $\vdash$ is symmetric consequence relation (note that $\vdash$ is not monotone as $x,x\nvdash [x]$), 
\item $\Gamma\vdash^a x$ iff $\Gamma = [x]$
\item we do not have $x,x,x \mathrel{(\vdash^a)^s} [x,x]$, and thus $(\vdash^a)^s$ and $\vdash$ are distinct symmetrizations of $\vdash^a$.
\end{itemize}

\end{example}
\noindent
This shows the principle of the thing, but we can give a more natural example.

\begin{example}\label{e:symAbel}
Recall the consequence relation $\vdash_\alg{Z}$, given in Example~\ref{e:GoodAbel}, and define an alternate symmetrization $\vdash$ as follows (recall that we treat the empty sum as $0$)
$$
\f_1, \dots, \f_m \vdash [\vectn\p]  \qquad\text{iff} \qquad 	  \alg  Z \models \f_1 + \f_2 + \dots + \f_m \leq \p_1 + \p_2 + \dots + \p_n
$$
It is easy to see that ${\vdash^a} = {\vdash_\alg{Z}}$ but ${\vdash} \neq {\vdash^s_\alg{Z}}$ and so ${\vdash} \neq (\vdash^a)^s$.
Indeed we have $\vdash[\overline{1},\overline{-1}]$, but since $\nvdash_\alg{Z}\overline{-1}$, it follows that $\nvdash^s_\alg{Z}[\overline{1},\overline{-1}]$.

Another example could be built using the logic BCI expanded by conjunction $\circ$ and its unit, the truth constant $t$ described by axioms $\f\circ t\to\f$ and $\f\to t\circ \f$: let us define an alternate symmetrization $\vdash$ for it as (treating the empty conjunction as constant $t$)
$$
\f_1, \dots, \f_m \vdash [\vectn\p]  \qquad\text{iff} \qquad \vdash_{\BCI} \f_1 \circ \f_2 \circ \dots \circ \f_m \to \p_1 \circ \p_2 \circ \dots \circ \p_n
$$
As in the previous case, it is easy to see that ${\vdash^a} = {\vdash_\BCI}$ (thanks to Proposition~\ref{p:DeductionTheorem}) but ${\vdash} \neq {\vdash^s_\BCI}$ and so ${\vdash} \neq (\vdash^a)^s$.
Indeed we have $\f\circ\f\vdash[\f,\f]$ but as $\nvdash_\BCI\f$ we have $\f\circ\f\nvdash^s_\BCI[\f,\f]$.
\end{example}

\pagebreak

Let us consider some additional properties of symmetric relations between finite multisets and their interplay with the defining conditions of symmetric consequence relations. The following are some natural candidates:

\begin{itemize}
\item $\Gamma,\Delta\vdash \Gamma$. \hfill \GENREFL
\item $\Gamma\vdash \emptymultiset$. \hfill \TRASH
\item  If\/ $\Gamma\vdash \Delta$ and $\Delta,\Psi\vdash \Phi$, then $\Gamma,\Psi\vdash \Phi$. \hfill  \MULTICUT 
\item If\/ $\emptymultiset\vdash \Delta$ and $\Delta, \Psi\vdash \Phi$, then $\Psi\vdash \Phi$. \hfill \THMREM
\end{itemize}

The proofs of the following are straightforward, and left to the enthusiastic reader.

\begin{proposition}\label{p:OnCuts1}
Let ${\vdash}$ be a binary relation on finite multisets of formulas.
\begin{itemize}
\item[(i)] \REFL\ and \MONO\ entail \GENREFL
\item[(ii)] \GENREFL\ and \TRAN\ entail \MONO
\item[(iii)] \TRASH\ and \COMP\ entail \GENREFL
\end{itemize}
Thus in particular a relevant symmetric consequence relation is monotone iff it satisfies any of the following three conditions: \MONO, \GENREFL, or \TRASH.
\end{proposition}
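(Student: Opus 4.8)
The plan is to treat (i)--(iii) as immediate consequences of the relevant axiom schemes, each obtained by a well-chosen substitution, and then to package the final ``in particular'' sentence as a short cycle of implications.

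For (i), I would start from \REFL\ in the form $\Gamma\vdash\Gamma$ and apply \MONO\ with the side multiset taken to be $\Delta$, which yields $\Gamma,\Delta\vdash\Gamma$, i.e.\ \GENREFL. For (ii), assuming $\Gamma\vdash\Delta$, I would first invoke \GENREFL\ to get $\Gamma,\Psi\vdash\Gamma$ and then close with \TRAN\ applied to $\Gamma,\Psi\vdash\Gamma$ and $\Gamma\vdash\Delta$, obtaining $\Gamma,\Psi\vdash\Delta$, which is \MONO. For (iii), to derive $\Gamma,\Delta\vdash\Gamma$ I would apply \TRASH\ to obtain $\Delta\vdash\emptymultiset$ and then \COMP\ with side multiset $\Gamma$ to obtain $\Delta,\Gamma\vdash\emptymultiset,\Gamma$; since $\emptymultiset\uplus\Gamma=\Gamma$ and $\uplus$ is commutative, this reads $\Gamma,\Delta\vdash\Gamma$, i.e.\ \GENREFL.

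For the concluding equivalence, I would fix a relevant symmetric consequence relation, which by Definition~\ref{def:SCR} satisfies \REFL, \TRAN, and \COMP. Monotonicity is by definition just \MONO. By (i), \MONO\ gives \GENREFL; specializing the schematic premise of \GENREFL\ with $\Gamma:=\emptymultiset$ (licensed by the universal-closure convention for bare variables) yields $\Delta\vdash\emptymultiset$ for every finite multiset $\Delta$, i.e.\ \TRASH; by (iii), \TRASH\ together with \COMP\ gives back \GENREFL; and by (ii), \GENREFL\ together with \TRAN\ gives \MONO. Hence \MONO, \GENREFL, and \TRASH\ are pairwise equivalent in the presence of \REFL, \TRAN, \COMP, and each is equivalent to monotonicity.

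I do not expect any genuine obstacle here: the only points demanding a little care are the bookkeeping that $\emptymultiset\uplus\Gamma=\Gamma$ and that multiset sum is commutative (so that \COMP, read as ``adding $\Psi$ on both sides'', may be applied with the two multisets in either order), and making sure the specialization $\Gamma:=\emptymultiset$ in \GENREFL\ is extracted correctly from its universally quantified statement. Everything else is a single-line application of one axiom scheme.
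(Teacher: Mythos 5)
Your proof is correct, and since the paper explicitly leaves this proposition to the reader (``The proofs of the following are straightforward\dots''), your one-application-per-item argument is exactly the intended one. The only step beyond items (i)--(iii) needed for the closing equivalence is the specialization $\Gamma:=\emptymultiset$ in \GENREFL\ to obtain \TRASH, which you handle correctly.
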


\begin{proposition}\label{p:OnCuts2}
Let ${\vdash}$ be a binary relation on finite multisets of formulas.
\begin{itemize}
\item[(i)] \MULTICUT\ entails \TRAN\ and \THMREM\

\item[(ii)] \MULTICUT\ and \REFL\ entail \COMP

\item[(iii)] \TRAN\ and \COMP\ entail \MULTICUT 

\end{itemize}
Thus in particular any relevant symmetric consequence relation enjoys \MULTICUT\ and \THMREM. 
\end{proposition}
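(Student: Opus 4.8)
The plan is to observe that every clause of Proposition~\ref{p:OnCuts2} is obtained by a one-step instantiation or composition of the principles involved, with the empty multiset $\emptymultiset$ used as a free parameter and \REFL\ supplying a starting consecution where needed; no induction or case analysis is required.

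For (i), I would read off \TRAN\ from \MULTICUT\ by instantiating the side-multiset with $\emptymultiset$: given $\Gamma\vdash\Delta$ and $\Delta\vdash\Phi$, the instance of \MULTICUT\ with the parameter $\Psi:=\emptymultiset$ yields $\Gamma\vdash\Phi$, which is \TRAN\ up to renaming the schematic letters. Dually, \THMREM\ is the instance of \MULTICUT\ with $\Gamma:=\emptymultiset$: from $\emptymultiset\vdash\Delta$ and $\Delta,\Psi\vdash\Phi$ one gets $\emptymultiset,\Psi\vdash\Phi$, i.e.\ $\Psi\vdash\Phi$. For (ii), starting from $\Gamma\vdash\Delta$ I would use \REFL\ to obtain $\Delta,\Psi\vdash\Delta,\Psi$ and then apply \MULTICUT\ with $\Phi:=\Delta,\Psi$ to the pair $\Gamma\vdash\Delta$, $\Delta,\Psi\vdash\Delta,\Psi$, producing $\Gamma,\Psi\vdash\Delta,\Psi$, which is \COMP.

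For (iii), given $\Gamma\vdash\Delta$ and $\Delta,\Psi\vdash\Phi$, I would first apply \COMP\ to the first consecution with the multiset $\Psi$ to get $\Gamma,\Psi\vdash\Delta,\Psi$, and then chain this with $\Delta,\Psi\vdash\Phi$ by \TRAN\ to reach $\Gamma,\Psi\vdash\Phi$, which is exactly \MULTICUT. The closing ``in particular'' clause is then immediate: a symmetric consequence relation satisfies \REFL, \TRAN\ and \COMP\ by Definition~\ref{def:SCR}, so (iii) endows it with \MULTICUT, whence (i) endows it with \THMREM.

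I do not anticipate a genuine obstacle here; the only things to keep straight are the bookkeeping of which multiset is substituted for which schematic letter — in particular that \COMP\ adjoins the \emph{same} $\Psi$ to both sides, which is precisely what closes the gap between $\Gamma\vdash\Delta$ and $\Delta,\Psi\vdash\Phi$ in (iii) — and the (harmless) fact that a schematic multiset may be instantiated by $\emptymultiset$ in (i). It may be worth remarking, for orientation, that these derivations mirror exactly the familiar set-theoretic interderivability of Cut, Transitivity and Weakening, with \COMP\ playing the role of the (two-sided) weakening step.
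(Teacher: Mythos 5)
Your proof is correct, and it is exactly the ``straightforward'' argument the paper has in mind when it leaves this proposition to the reader: each clause follows by instantiating \MULTICUT\ with $\emptymultiset$ or by a single composition of \COMP\ with \TRAN, and the closing claim follows from (iii) and (i) via Definition~\ref{def:SCR}. Nothing is missing.
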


\section{Derivations in the Symmetric Case}

In the asymmetric case we started from the concrete, with the notion of a (relevant) tree proof, and moved to the abstract: in the symmetric case, we're in the process of doing the opposite. So far we've fixed the abstract notion of symmetric consequence, building on the asymmetric one, but that leaves the natural question of how to understand derivations of multiple conclusions from multiple premises, in our sense. To that end, let us fix some definitions and infer some immediate results indicating that our abstract notion of symmetric consequence captures an intuitive concrete one (relying, as always, on our assumptions of finitude).

\begin{definition}
A \emph{symmetric axiomatic system} $\AS$ in $A$ is a set of $\form$-consecutions of the form $\Gamma\rhd\Delta$, where $\Gamma,\Delta$ are finite multisets of formulas -- i.e., $\AS$ is a set of \emph{multiple conclusion} consecutions. As in the asymmetric case, if $\Gamma=\emptymultiset$ then the consecution $\Gamma\rhd\Delta$ is an axiom, and otherwise it is a rule of inference.
\end{definition}

\begin{definition}[Finitary derivation]\label{d:proof}
Let $\AS$ be a symmetric axiomatic system in $A$. A \emph{derivation} of $\Delta$ from $\Gamma$ in $\AS$ is a finite sequence $\Gamma = \Gamma_1,\dots, \Gamma_n \ge \Delta$ of finite multisets of formulas such that for every $1< i \leq n$, there is a rule $\Psi \rhd \Psi'\in\AS$, such that 
$$
\Psi \le \Gamma_{i}\qquad\qquad\text{and} \qquad\qquad  \Gamma_i  = (\Gamma_{i-1}\Msetminus \Psi) \uplus \Psi'
$$
A derivation of $\Delta$ is \emph{relevant} if $\Gamma_n = \Delta$. We say that $\Delta$ is \emph{(relevantly) derivable} from $\Gamma$ in $\AS$, and write $\Gamma\vdash^{(r)}_{\!\AS}\Delta$, if there is a (relevant) derivation of\/ $\Delta$ from $\Gamma$ in $\AS$.
\end{definition}

The following lemma supports the adequacy of the given definition.

\begin{lemma}\label{prelu}
Let $\AS$ be a symmetric axiomatic system in $A$. Then $\vdash^{r}_{\!\AS}$ is the least symmetric  consequence relation containing $\AS$ and  $\vdash_{\!\AS}$ is the least monotone symmetric  consequence relation containing $\AS$.
\end{lemma}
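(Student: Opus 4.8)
The strategy mirrors the proof of Proposition~\ref{prop:least-conseq} in the asymmetric case, now carried out for sequence-style derivations and multiple-conclusion consecutions. There are two things to establish: (a) $\vdash^{r}_{\!\AS}$ (resp.\ $\vdash_{\!\AS}$) is a (monotone) symmetric consequence relation containing $\AS$; (b) it is the least such. For (a), containment $\AS\subseteq{\vdash^{(r)}_{\!\AS}}$ is immediate: given a rule $\Gamma\rhd\Delta\in\AS$ with $\Gamma\neq\emptymultiset$, the two-term sequence $\Gamma,(\Gamma\Msetminus\Gamma)\uplus\Delta=\Gamma,\Delta$ is a relevant derivation; for an axiom $\emptymultiset\rhd\Delta$ one checks it directly as well. \REFL\ ($\Gamma\vdash^{(r)}_{\!\AS}\Gamma$) holds via the one-term sequence $\Gamma$. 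For \TRAN, concatenate a derivation of $\Delta$ from $\Gamma$ with one of $\Psi$ from $\Delta$; in the relevant case the first ends exactly at $\Delta$ so the splice is seamless, in the monotone case the first ends at some $\Gamma_n\ge\Delta$ and one observes that the same rule applications that transform $\Delta$ into $\Psi$ can be applied starting from $\Gamma_n$ (since $\Psi'\le\Gamma_i$ is a submultiset condition, having \emph{more} formulas around never blocks a rule), yielding a final multiset $\ge\Psi$. For \COMP, take a derivation $\Gamma=\Gamma_1,\dots,\Gamma_n\ge\Delta$ and add $\Psi$ pointwise to every step: each rule application $\Psi^\ast\le\Gamma_{i}$, $\Gamma_i=(\Gamma_{i-1}\Msetminus\Psi^\ast)\uplus\Psi''$ still works after adding $\Psi$ because $\Psi^\ast\le\Gamma_i\uplus\Psi$ and $(\Gamma_{i-1}\uplus\Psi)\Msetminus\Psi^\ast=((\Gamma_{i-1}\Msetminus\Psi^\ast)\uplus\Psi$, using that we may assume $\Psi^\ast\le\Gamma_{i-1}$ (the derivation can be normalized so each step's rule-premise is already present); the endpoint becomes $\Gamma_n\uplus\Psi\ge\Delta\uplus\Psi$, and equals it in the relevant case. \MONO\ for $\vdash_{\!\AS}$: prepend nothing, just note a derivation from $\Gamma$ is a derivation from $\Gamma\uplus\Psi$ after adding $\Psi$ to all terms, giving a final multiset $\ge\Delta$ (but not necessarily $=\Delta$, which is exactly why monotonicity fails for the relevant version).

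For the minimality claim (b), let $\vdash$ be any symmetric consequence relation with $\AS\subseteq{\vdash}$; we show $\Gamma\vdash^{r}_{\!\AS}\Delta$ implies $\Gamma\vdash\Delta$ (and the monotone analogue for $\vdash_{\!\AS}$). Take a relevant derivation $\Gamma=\Gamma_1,\dots,\Gamma_n=\Delta$ and prove by induction on $i$ that $\Gamma\vdash\Gamma_i$. The base case is \REFL. For the step, suppose $\Gamma\vdash\Gamma_{i-1}$ and $\Gamma_i=(\Gamma_{i-1}\Msetminus\Psi)\uplus\Psi'$ with $\Psi\le\Gamma_{i-1}$ and $\Psi\rhd\Psi'\in\AS$. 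Write $\Gamma_{i-1}=(\Gamma_{i-1}\Msetminus\Psi)\uplus\Psi$. Since $\AS\subseteq{\vdash}$ we have $\Psi\vdash\Psi'$, and by \COMP\ (adding $\Gamma_{i-1}\Msetminus\Psi$ to both sides) we get $\Gamma_{i-1}=(\Gamma_{i-1}\Msetminus\Psi)\uplus\Psi\vdash(\Gamma_{i-1}\Msetminus\Psi)\uplus\Psi'=\Gamma_i$; combining with $\Gamma\vdash\Gamma_{i-1}$ via \TRAN\ yields $\Gamma\vdash\Gamma_i$. At $i=n$ this gives $\Gamma\vdash\Delta$. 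The monotone case is the same except the derivation may end at $\Gamma_n\ge\Delta$; here one needs, in addition, that $\Gamma_n\vdash\Delta$ whenever $\Gamma_n\ge\Delta$ — this is \GENREFL, which holds for monotone symmetric consequence relations by Proposition~\ref{p:OnCuts1}(i) — and then \TRAN\ finishes. One subtlety to handle in the axiom case ($\Psi=\emptymultiset$, so $\Gamma_i=\Gamma_{i-1}\uplus\Psi'$): here the argument is identical since $\Psi\vdash\Psi'$ reads $\emptymultiset\vdash\Psi'$, and \COMP\ with $\Gamma_{i-1}$ gives $\Gamma_{i-1}\vdash\Gamma_{i-1}\uplus\Psi'$.

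\textbf{Main obstacle.} The routine bookkeeping is the pervasive need to argue that in a derivation we may assume, at each step, that the rule's premise multiset $\Psi$ is \emph{actually present} as a submultiset of the \emph{previous} term (not just notionally consumed from it); the definition already demands $\Psi\le\Gamma_i$, but one wants $\Psi\le\Gamma_{i-1}$ too, so that the multiset identity $(\Gamma_{i-1}\uplus\Psi^{+})\Msetminus\Psi=(\Gamma_{i-1}\Msetminus\Psi)\uplus\Psi^{+}$ and its relatives go through cleanly under the pointwise manipulations in \TRAN, \COMP, and \MONO. This should follow directly from re-reading Definition~\ref{d:proof}: since $\Gamma_i=(\Gamma_{i-1}\Msetminus\Psi)\uplus\Psi'$ and the operation $\Msetminus$ is truncated subtraction, one in fact does need the side condition $\Psi\le\Gamma_{i-1}$ to be part of a well-behaved derivation (otherwise the rule is not genuinely "applied"); I would either read this as implicit in the definition or add a one-line normalization lemma. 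Apart from this, the only mildly delicate point is keeping the "relevant = ends exactly at $\Delta$" versus "monotone = ends at something $\ge\Delta$" distinction straight through each clause, and invoking \GENREFL\ (Proposition~\ref{p:OnCuts1}) at exactly the spot where monotonicity is needed to bridge the gap between $\Gamma_n$ and $\Delta$.
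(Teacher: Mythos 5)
Your proof is correct and follows essentially the same route as the paper's: \REFL\ via the one-term sequence, \COMP\ by adding the context pointwise to every step, \TRAN\ by concatenation (padding the second derivation in the monotone case), and minimality by induction along the derivation using \COMP\ to place each rule in context and \TRAN\ to chain. Your explicit appeal to \GENREFL\ to bridge $\Gamma_n \ge \Delta$ in the monotone case, and your flagging of the side condition $\Psi \le \Gamma_{i-1}$, only make explicit what the paper leaves implicit.
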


\begin{proof}
Now we proceed to check all the properties of  (monotone) symmetric consequence relations
\begin{description}
\item[\REFL:] This is clear from the sequence $\langle \Gamma_{1}\rangle$ where $\Gamma_{1}= \Gamma$.
\item[\COMP:] Given a (relevant) derivation $P=\langle \Gamma_{1},\dots,\Gamma_{n}\rangle $ of $\Delta$ from $\Gamma$ in $\AS$, it is easy to observe that the sequence $P'=\langle \Gamma_{1}\uplus\Pi,\dots,\Gamma_{n} \uplus\Pi\rangle $ is a derivation of $\Delta\uplus\Pi$ from $\Gamma\uplus\Pi$ (thanks to the fact that in our notion of proof, we can apply rules in an arbitrary context).

\item[\TRAN:] The relevant case is obvious: one must simply concatenate the relevant derivations of $\Delta$ from $\Gamma$ and of $\Psi$ from $\Delta$ (with having $\Delta$ there only once in the middle). The monotone case is a bit trickier as we have to extend all elements of the latter derivation by the formulas in the last step of the former derivation which are not in $\Delta$ (so the last step of the former derivation is the same as the first step of the latter one).

\item[\MONO:] This is a simple exercise left to the reader.
\end{description}

Now for the proof that $\vdash_{\!\AS}$ is the least such relation. Obviously $\AS \subseteq{\vdash^{r}_{\!\AS}}$. Consider 
a symmetric consequence relation~$\vdash$ such that $\AS\subseteq{\vdash}$, and we show that then ${\vdash^r_{\!\AS}}\subseteq {\vdash}$. Assume that $\Gamma\vdash^r_{\!\AS}\Delta$, i.e., there is a relevant derivation $\Gamma_1, \dots, \Gamma_n$ of $\Delta$ from $\Gamma$ in $\AS$. By induction, we can show that $\Gamma\vdash\Gamma_i$, and hence in particular $\Gamma\vdash\Delta$. The base case is settled with an appeal to \REFL. As to the induction step: we know that $\Gamma_{i+1} = (\Gamma_i\Msetminus\Psi)\uplus\Psi'$ for some rule $\Psi \rhd\Psi' \in\AS\subseteq {\vdash}$. Thus $\Psi\vdash\Psi'$, and so by \COMP\ also $\Psi\uplus(\Gamma_{i}\Msetminus\Psi)  \vdash \Psi'\uplus(\Gamma_i\Msetminus\Psi)  $, i.e., $\Gamma_i\vdash \Gamma_{i+1}$. As due to the induction assumption we know that $\Gamma\vdash\Gamma_i$, an application of \TRAN\ completes the proof. The monotone case is analogous: as we only know that $\Gamma_n \le \Delta$ we only prove that $\Gamma\vdash \Gamma_n$; here, however, the monotonicity yields the required result.
\end{proof}


\begin{example}
Recall \BCI\ from Section~\ref{sect:BCI-Example}.  Let's consider some provable and unprovable symmetric consecutions in this logic. As a very simple example, note that:
\[
    \f\to\p,\f\to\x,\f,\f \nvdash[\f,\p,\x]
\]
as while we can obtain assymetric proofs for $\f\to\p,\f\vdash\p$ and $\f\to\x,\f\vdash\x$, as we cannot find any further formulas from which to infer the $\f$ occurring among the premises. However, this is the case if we take:
$$\f\to\p,\f\to\x,\f,\f,\f \vdash[\f,\p,\x] $$

\noindent Furthermore, the relevance constraints mean that, in general, $\f,\p\nvdash^r[\f]$. Interestingly, while in the asymmetric case we have $\f\nvdash^r\f\to\f$, when we go symmetric we obtain something related, namely, $\f\vdash^r[\f\to\f,\f]$, for we can obtain, in the asymmetric setting, $\vdash^r\f\to\f$ and $\f\vdash^r\f$. This is related to the fact that, if we add~$\circ$, along with a constant formula $t$ expressing $\emptymultiset$ (i.e., something like a left identity in the sense of \cite{DunnH}) to the language, that $\f\to(\f\to\f)$ is not a valid implication, but we do have $\f\to((\f\to\f)\circ\f)$, as, $t\to(\f\to\f)$ and $\f\to\f$ are both valid, and thus so is $\f\to(t\circ\f)\to((\f\to\f)\circ\f)$. 


\end{example}

The next propositions spell out the relation between derivations and tree-proofs.

\begin{proposition}\label{p:trees}
Let $\AS$ be a single\=/conclusion axiomatic system, $\Gamma\vdash^r_{\!{\!\AS}}\Delta$, and $\varphi \in \Delta$. Then there are multisets of formulas $\Gamma^{\varphi}$ and $\Gamma^{r}$ such that $\Gamma^{\varphi }\uplus\Gamma^{r}=\Gamma$, $\Gamma^{\varphi}\vdash_{\!{\!\AS}}^{r}[\varphi]$ and $\Gamma^{r}\vdash^r_{\!{\!\AS}}\Delta\Msetminus [\f]$.
\end{proposition}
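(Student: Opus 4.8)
The plan is to work directly with a relevant derivation witnessing $\Gamma\vdash^r_\AS\Delta$ and to \emph{split} it in two according to which part of it produces the chosen occurrence of $\varphi$. Fix a relevant derivation $D=\langle\Gamma_1,\dots,\Gamma_n\rangle$ of $\Delta$ from $\Gamma$, so $\Gamma_1=\Gamma$ and $\Gamma_n=\Delta$. The first step is to make the history of each formula occurrence explicit: regard each $\Gamma_i$ as a set $S_i$ of distinct \emph{tokens} carrying formula labels, and record, for the $i$-th step (which uses a rule $\Psi\rhd[\psi]\in\AS$ with a singleton conclusion, since $\AS$ is single-conclusion), which tokens of $S_{i-1}$ are consumed — those realizing the premise multiset $\Psi$ — and which fresh token is created, namely the one labelled $\psi$. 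Declaring the created token to be the \emph{parent} of each token it consumes turns the set of all tokens into a forest whose roots are exactly the tokens surviving into $S_n$, i.e.\ the occurrences in $\Delta$, and whose leaves are the tokens of $S_1$ (the occurrences in $\Gamma$) together with the tokens introduced by axioms.

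Next, I would pick a token $t$ in $S_n$ with label $\varphi$ (one exists because $\varphi\in\Delta$) and let $V$ be the node set of the subtree rooted at $t$. The key combinatorial observations are: $V$ is closed under taking children; $S_n\cap V=\{t\}$, since any other element of $V$ has a parent, hence was consumed and is absent from $S_n$; and, dually, a token lies in $V$ only if it is $t$ or its parent lies in $V$, so whenever a created token falls \emph{outside} $V$, all tokens it consumes also lie outside $V$. Now set $\Gamma^{\varphi}$ to be the label-multiset of $S_1\cap V$ and $\Gamma^{r}:=\Gamma\Msetminus\Gamma^{\varphi}$; since $S_1$ is partitioned into $S_1\cap V$ and $S_1\setminus V$ and carries label-multiset $\Gamma$, we obtain $\Gamma^{\varphi}\uplus\Gamma^{r}=\Gamma$ immediately.

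Then I would extract the two sub-derivations by projection. Put $S_i^{\varphi}:=S_i\cap V$ and run through the steps of $D$: a step whose created token lies in $V$ consumes only tokens of $V$ (closure under children), so it restricts to a bona fide application of the same rule, turning the label-multiset of $S_{i-1}^{\varphi}$ into that of $S_i^{\varphi}$; a step whose created token lies outside $V$ consumes no token of $V$, so $S_i^{\varphi}=S_{i-1}^{\varphi}$ and the step can be dropped. Deleting these stutter steps leaves a relevant derivation from $\Gamma^{\varphi}$ (the label-multiset of $S_1^{\varphi}$) to $[\varphi]$ (the label-multiset of $S_n^{\varphi}=\{t\}$), so $\Gamma^{\varphi}\vdash^r_\AS[\varphi]$. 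The complementary projection $S_i^{r}:=S_i\setminus V$ works by the mirror-image argument — steps inside $V$ become stutters, steps outside $V$ restrict to genuine rule applications — and yields a relevant derivation from $\Gamma^{r}$ to $\Delta\Msetminus[\varphi]$ (the label-multiset of $S_n\setminus\{t\}$), so $\Gamma^{r}\vdash^r_\AS\Delta\Msetminus[\varphi]$.

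The main obstacle is getting the token bookkeeping genuinely rigorous: one must check that ``$\Gamma_i$ as a set of tokens'' is well-defined along the derivation, that the parent relation is functional — each token is consumed by at most one later step, which is exactly where single-conclusion is essential, since a multi-conclusion rule could make a consumed token a child of several fresh tokens and destroy the forest structure — and that the projected sequences, after stutter-deletion, satisfy Definition~\ref{d:proof} on the nose. Once that framework is in place the two projections are routine, and the degenerate cases ($\varphi$ already among the leaves, or $n=1$) are subsumed by the general construction, with $\Gamma^{\varphi}=[\varphi]$.
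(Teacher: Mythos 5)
Your proposal is correct and is essentially the paper's own argument: both track formula occurrences as tokens through the stages of the derivation, organize them into a forest by letting each created token be the parent of the tokens it consumes, take the subtree rooted at an occurrence of $\varphi$ in the final multiset, and split the derivation into the part inside that subtree and its complement. The only cosmetic difference is that the paper represents persistence between stages by explicit ``non-rule edges'' and collapses them to turn the $\varphi$-subtree into a tree-proof, whereas you keep tokens fixed across stages and obtain both halves as projected derivations with stutter steps deleted; the two bookkeeping schemes are equivalent.
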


\begin{proof}

Let $\Gamma_{1},\dots,\Gamma_{n}$ be the assumed relevant derivation of $\Delta$ from $\Gamma$ in $\AS$. For each $i\leq n$, let $\Gamma_{i}=[\psi_{1}^{i},\dots,\psi_{k_{i}}^{i}]$ and note that without loss of generality we can assume  that the rule used in the $i$-th step of the proof is $[\psi_{1}^{i}, \dots, \psi_{p_i}^{i} ] \rhd\psi_{c_{i}}^{i+1}$ for some $p_i \leq k_{i}$ and $c_{i}\leq k_{i+1}$. Also note that $k_{i+1}=k_{i+1}- p_{i} + 1$ and there is a bijection $f$ between $\Gamma_{i+1}\Msetminus\lbrack\psi_{c_{i}}^{i+1}]$ and $\Gamma _{i}\Msetminus[\psi_{1}^{i}, \dots, \psi_{p_i}^{i} ]$ such that $\psi_{j} ^{i+1}=\psi_{f(j)  }^{i}$ whenever $c_{i}\neq j\leq k_{i+1}$. We construct the labelled graph $G$ with nodes $N=\{\langle i,j\rangle\mid i\leq n\text{ and }j\leq k_{i}\}$, where $\psi_{j}^{i}$ is the label of $\langle i,j\rangle$, and edges only between the following nodes:
\begin{enumerate}
\item[$\bullet$] \emph{rule edges}: $\langle i,k\rangle$ and $\langle{i+1},c_{i}\rangle$
for each $k\leq p_{i};$

\item[$\bullet$] \emph{non-rule edges}: $\langle i,f(j)\rangle$ and $\langle{i+1},j\rangle$ for $j\neq c_{i}$.
\end{enumerate}

It is easy to see that~$G$ is a forest (a disjoint union of trees). Let~$t$ be the subtree of~$G$ with root labeled by~$\f$ (if there are more such nodes, choose any), and let $\Gamma^{\varphi}$ be the multiset of all labels of leaves in $t$ which are not axioms. Then clearly $\Gamma^{\varphi}\le\Gamma$ and $t$ is \emph{almost} a relevant tree-proof of $\varphi$ from $\Gamma^{\varphi}$; all we have to do is to collapse nodes connected by non-rule edges.

Finally, let us by $t_i$ denote the multiset of labels occurring on the $i$-th level of $t$ (assuming that leaves are on the $1$st level and root on the $n$-th one) and set $\Gamma_{i}^{t} = \Gamma\Msetminus t_i$. Observe that $\Gamma_{1}^{t},\dots,\Gamma_{n}^{t}$ is \emph{almost} a proof of $\Gamma_{n}^{t}=\Gamma_{n}\Msetminus[\f]$ from $\Gamma_{1}^{t}$: we only need to remove each $\Gamma_{i}^{t}$ which equals its predecessor. Defining $\Gamma^{r}=\Gamma_{1}^{t}$ and observing that $\Gamma^{r} =\Gamma\Msetminus\Gamma^{\varphi}$ and $\Delta\Msetminus[\f]=\Gamma_{n}^{t}$ completes the proof.
\end{proof}

\begin{proposition}
Let $\vdash$ be a consequence relation (n.b.\ not a \emph{symmetric} consequence relation) with an axiomatic system $\AS$. Then $\AS$ is a single\=/conclusion axiomatic system for the symmetric consequence ${\vdash^s}$.
\end{proposition}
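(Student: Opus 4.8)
The claim is that if $\vdash$ is an (asymmetric) consequence relation with axiomatic system $\AS$, then the \emph{same} set $\AS$, now read as a symmetric axiomatic system (each single-conclusion consecution $\Gamma \rhd \f$ being a multiple-conclusion consecution $\Gamma \rhd [\f]$), is an axiomatic system for the symmetrization $\vdash^s$ — that is, $\vdash^s {=} \vdash^r_{\!\AS}$ where the right-hand side is computed via symmetric derivations as in Definition~\ref{d:proof}. The plan is to chain together results already in the paper: $\vdash = \vdash^r_{\!\AS}$ as asymmetric relations (Proposition~\ref{prop:least-conseq}, since any consequence relation is its own relevant presentation), so it suffices to show that the symmetric consequence relation generated by $\AS$ under Definition~\ref{d:proof} coincides with the symmetrization of the asymmetric one generated by $\AS$. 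By Lemma~\ref{prelu}, $\vdash^r_{\!\AS}$ (symmetric version) is the least symmetric consequence relation containing $\AS$; by Proposition~\ref{p:OnSymmetrization}(i), $\vdash^s$ is a symmetric consequence relation, and it clearly contains $\AS$ (each $\Gamma \rhd \f$ in $\AS$ gives $\Gamma \vdash \f$ hence $\Gamma \vdash^s [\f]$). So one inclusion, $\vdash^r_{\!\AS} \subseteq \vdash^s$, is immediate from minimality.

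For the reverse inclusion $\vdash^s \subseteq \vdash^r_{\!\AS}$ (symmetric), suppose $\Gamma \vdash^s [\vectn{\chi}]$. By definition there are $\Gamma_i \le \Gamma$ with $\Gamma_i \vdash \chi_i$ and $\Gamma_1 \uplus \dots \uplus \Gamma_n = \Gamma$ (the case $n = 0$ is handled separately below). Since $\vdash {=} \vdash^r_{\!\AS}$ asymmetrically, each $\Gamma_i$ has a relevant tree-proof $T_i$ of $\chi_i$ with $\Lambda_{T_i} = \Gamma_i \uplus \Delta_i$ for some multiset $\Delta_i$ of axioms of $\AS$. Now I would invoke Proposition~\ref{p:trees} in the converse direction — or rather, argue directly that a relevant tree-proof of $\chi_i$ from $\Gamma_i$ in $\AS$ yields a relevant symmetric derivation of $[\chi_i]$ from $\Gamma_i$ in $\AS$: traverse the tree bottom-up, each non-leaf node corresponding to applying the matching rule $\Psi \rhd \psi$ of $\AS$ to the current multiset. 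Assembling these, a relevant symmetric derivation of $[\chi_i]$ from $\Gamma_i$ extends by \COMP-style padding (adding the rest of $\Gamma$ as context, exactly as in the \COMP\ case of Lemma~\ref{prelu}) to a derivation that, starting from $\Gamma$, produces $\chi_i$ while carrying along $\Gamma \Msetminus \Gamma_i$. Chaining these $n$ derivations using \TRAN\ (concatenation) converts $\Gamma$ step by step into $[\vectn{\chi}]$, giving $\Gamma \vdash^r_{\!\AS} [\vectn{\chi}]$. For the $n = 0$ case, $\Gamma \vdash^s \emptymultiset$ means $\Gamma \vdash \x$ for every theorem $\x$ of $\vdash$; here I would argue that whatever derivations witness this (being able to run down to nothing requires, via Definition~\ref{d:proof}, $\Gamma_n \ge \emptymultiset$, which is vacuous, so $\Gamma \vdash_{\!\AS}^r \emptymultiset$ holds as soon as some derivation reduces $\Gamma$ to the empty multiset) — and cross-check this matches $\vdash^s$'s empty-conclusion clause. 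Honestly, the cleanest route is just to cite that both $\vdash^s$ and the symmetric $\vdash^r_{\!\AS}$ are symmetric consequence relations containing $\AS$, note $\vdash^r_{\!\AS}$ is the least such (Lemma~\ref{prelu}), and for the reverse inclusion use Proposition~\ref{p:trees} to decompose any symmetric relevant derivation $\Gamma \vdash^r_{\!\AS} \Delta$ into asymmetric tree-proofs witnessing $\Gamma \vdash^s \Delta$ by iterating the decomposition over $\Delta$.

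\textbf{Main obstacle.} The delicate point is the treatment of the empty conclusion: Definition~\ref{d:proof} of symmetric derivation only requires $\Gamma_n \ge \Delta$, so a "relevant" derivation of $\emptymultiset$ must literally terminate in the empty multiset, which is possible only if the rules allow consuming all formulas without producing any — and one must check this lines up precisely with the clause "$\Gamma \vdash^s \emptymultiset$ iff $\Gamma \vdash \x$ for every theorem $\x$." Since axioms have the form $\emptymultiset \rhd [\f]$, they only \emph{add} formulas, so in fact $\AS$ (built from a single-conclusion system) can never delete everything unless $\Gamma = \emptymultiset$ to begin with — meaning $\Gamma \vdash^r_{\!\AS} \emptymultiset$ fails for nonempty $\Gamma$ while $\Gamma \vdash^s \emptymultiset$ may hold (e.g.\ if $\vdash$ has no theorems, then $\Gamma \vdash^s \emptymultiset$ for all $\Gamma$!). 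So the statement as literally phrased needs the symmetric derivation notion of Definition~\ref{d:proof} to be understood with its $\ge$ (non-relevant) reading for the empty-conclusion fragment, or the proposition is about $\vdash_{\!\AS}$ rather than $\vdash^r_{\!\AS}$ in that corner case — I would flag this and handle it by checking the definitions carefully, using Proposition~\ref{p:OnCuts1}/\ref{p:OnCuts2} if monotonicity is in play. Apart from this edge case, the argument is routine bookkeeping translating between the tree and sequence formats via Proposition~\ref{p:trees}.
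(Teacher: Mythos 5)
Your proof follows the same route as the paper's, whose entire argument is one sentence: $\AS\subseteq{\vdash^s}$ gives the inclusion ${\vdash^r_{\!\AS}}\subseteq{\vdash^s}$ via the minimality clause of Lemma~\ref{prelu}, and the converse ``is easy to see''; your tree\=/to\=/sequence translation with \COMP-style padding and \TRAN-style concatenation is precisely the content the paper omits, and it is correct for nonempty conclusion multisets. The obstacle you flag at the empty conclusion is genuine and the paper simply does not address it: every rule of a single\=/conclusion system has the form $\Psi\rhd[\psi]$ and so leaves $\psi$ in the resulting multiset, hence a relevant derivation ends in $\emptymultiset$ only if it starts there, so $\Gamma\vdash^r_{\!\AS}\emptymultiset$ forces $\Gamma=\emptymultiset$, whereas $\Gamma\vdash^s\emptymultiset$ holds for every $\Gamma$ as soon as $\vdash$ has no theorems (take $\AS=\{x\rhd y\}$ with no axioms). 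So the converse inclusion fails at $\Delta=\emptymultiset$ as the definitions literally stand; apart from this corner case --- which is a defect of the statement rather than of your argument, and which your suggested repairs would address --- your proof is complete.
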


\begin{proof}
As clearly $\AS \subseteq {\vdash^s}$,  it suffices to show that $\Gamma \vdash^s \Delta$ implies $\Gamma\vdash^r_{\!{\!\AS}}\Delta$, which is easy to see.
\end{proof}

\begin{proposition}
A symmetric consequence relation $\vdash$ has a single-conclusion axiomatic system iff\/ ${\vdash} = {(\vdash^a)^s}$. Therefore monotone and contractive consequence relations always have single-conclusion axiomatic presentations.
\end{proposition}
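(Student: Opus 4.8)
The plan is to establish the two directions of the equivalence separately and then read off the claim about monotone contractive relations.

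For the direction from ${\vdash}={(\vdash^a)^s}$ to the existence of a single-conclusion axiomatic system: by Proposition~\ref{p:OnSymmetrization}(ii) the relation $\vdash^a$ is a consequence relation, so (seeing it as its own relevant presentation, as guaranteed by Theorem~\ref{szko}) there is an axiomatic system $\AS$ in $\form$ consisting of single-conclusion consecutions with $\vdash^a = \vdash^{r}_{\!\AS}$. The proposition immediately preceding the one under consideration then tells us that $\AS$ is a single-conclusion axiomatic system for $(\vdash^a)^s$, and by hypothesis $(\vdash^a)^s = {\vdash}$, so we are done.

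For the converse, assume ${\vdash} = {\vdash^r_{\!\AS}}$ for a single-conclusion axiomatic system $\AS$. The inclusion ${\vdash}\supseteq{(\vdash^a)^s}$ holds for every symmetric consequence relation by Proposition~\ref{p:OnSymmetrization}(ii), so it suffices to prove ${\vdash}\subseteq{(\vdash^a)^s}$. Fix $\Gamma\vdash\Delta$. If $\Delta=\emptymultiset$, then for every theorem $\x$ of $\vdash^a$ we have $\emptymultiset\vdash[\x]$, whence $\Gamma\vdash[\x]$ by \TRAN, i.e.\ $\Gamma\vdash^a\x$; by the empty-conclusion clause in the definition of $\vdash^s$ this is exactly $\Gamma\mathrel{(\vdash^a)^s}\emptymultiset$. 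If $\Delta=[\x_1,\dots,\x_n]$ with $n\ge1$, apply Proposition~\ref{p:trees} iteratively, peeling off $\x_1$, then $\x_2$, and so on, to obtain a decomposition $\Gamma=\Gamma_1\uplus\dots\uplus\Gamma_n\uplus\Gamma'$ with $\Gamma_i\vdash[\x_i]$ for each $i\le n$ and $\Gamma'\vdash\emptymultiset$. It remains to absorb the residue $\Gamma'$: from $\Gamma'\vdash\emptymultiset$, \COMP\ gives $\Gamma'\uplus\Gamma_1\vdash\Gamma_1$, and \TRAN\ with $\Gamma_1\vdash[\x_1]$ gives $\Gamma'\uplus\Gamma_1\vdash[\x_1]$; hence $\Gamma'\uplus\Gamma_1,\Gamma_2,\dots,\Gamma_n$ is an exact partition of $\Gamma$ witnessing $\Gamma\mathrel{(\vdash^a)^s}\Delta$. (Alternatively, one may halt the peeling one conclusion early and let the remaining multiset play the role of $\Gamma_n$, avoiding the residue altogether.) Finally, a monotone and contractive symmetric consequence relation satisfies ${\vdash}={(\vdash^a)^s}$ by Proposition~\ref{p:OnSymmetrization}(iii), and hence admits a single-conclusion axiomatic system by the equivalence just proved.

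The forward direction and the manipulations with \TRAN\ and \COMP\ are routine; the one point needing care is that iterating Proposition~\ref{p:trees} produces a decomposition of the form $\Gamma_1\uplus\dots\uplus\Gamma_n\uplus\Gamma'$ together with a separate ``empty-conclusion'' fact $\Gamma'\vdash\emptymultiset$, whereas $\vdash^s$ is defined by an \emph{exact} partition of the premises plus a dedicated clause for the empty right-hand side. Reconciling the shape of the tree-decomposition output with the shape of the definition of $(\vdash^a)^s$ is thus the main (and only, minor) obstacle.
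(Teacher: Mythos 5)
Your proof is correct and follows essentially the same route as the paper's: Proposition~\ref{p:OnSymmetrization}(ii) reduces the backward direction to ${\vdash}\subseteq{(\vdash^a)^s}$, handled by the empty-conclusion clause plus iterated use of Proposition~\ref{p:trees}, while the forward direction combines the \L{}o\'s--Suszko theorem with the immediately preceding proposition. The only divergence is your explicit absorption of the residue $\Gamma'$ via \COMP{} and \TRAN{} (which the paper glosses over; in fact for a single-conclusion system a relevant derivation of $\emptymultiset$ forces $\Gamma'=\emptymultiset$, so the residue vanishes automatically), and this extra care is harmless and arguably an improvement in rigour.
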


\begin{proof}
Assume that  $\vdash$ has a single-conclusion axiomatic system; thanks to claim (ii) of Proposition~\ref{p:OnSymmetrization} we only have to prove that ${\vdash} \subseteq {(\vdash^a)^s}$. Assume first that $\Gamma\vdash \emptymultiset $ and we have to prove that $\Gamma \vdash^a \x$ for any theorem $\x$ of $\vdash^a$. As $\x$ is a theorem of $\vdash^a$, then we have $\vdash[\x]$ and so $\TRAN$ completes the proof. Second, assume that $\Gamma\vdash[\vectn{\chi}]$, and by $n$ uses of Proposition~\ref{p:trees} we obtain for each $i \leq n$ a multiset $\Gamma_i \le \Gamma$  such that  $\Gamma_i \vdash^a \x_i$ and  $\Gamma_1\uplus\dots\uplus\Gamma_n = \Gamma$; thus demonstrating that $\Gamma\mathrel{(\vdash^a)^s}[\vectn{\chi}]$.

The converse direction follows from the previous proposition and the final claims from the last part of Proposition~\ref{p:OnSymmetrization}.
\end{proof}

Using this proposition we know that the symmetric versions of Abelian logic and logic BCI introduced in Example~\ref{e:symAbel} have no single-conclusion axiomatic systems.



\section{Theories}\label{s:Theories}

Now that we have pictures of (relevant) symmetric consequence in abstract and concrete terms, the question of \emph{deductive closure}, and the properties of \emph{theories} is quite natural. The standard definition of this concept in the Tarskian setting is well known, but there are some complications which come up in our case. Let us work our way towards a natural definition, and show that it has some properties. 


Recall that we work with a fixed set of formulas $\form$. In the classical Tarskian setting (defined over potentially infinite sets of formulas, cf.\ Remark~\ref{r:TarskiCanBeRelevant}), a $\vdash$-theory is a deductively closed set of formulas (i.e., a set of formulas which contains all its consequences) and the set of all $\vdash$-theories forms a closure systems on the set of formulas which fully determines both the consequence relation and its symmetrization:
\begin{itemize}
\item $\Gamma\vdash \f$ iff $\f$ belongs to all $\vdash$-theories containing $\Gamma$ as a subset.
\item $\Gamma \vdash \Delta$ iff $\Delta$ is a subset of the $\vdash$-theory generated by $\Gamma$ (i.e., the set $\{\f\mid \Gamma\vdash\f\}$).
\end{itemize}

The problem with this approach for our setting is that the second condition clearly entails the \MONO\ condition. Further problems are caused by restriction to finite sets and by working with multisets rather than sets. Our guiding idea will be to see theories as particular \emph{sets of finite multisets of formulas} retaining as many nice properties of the Tarskian case as possible.  

As we have shown in the previous section, not every symmetric consequence relation arises from an asymmetric one; therefore we define all the notions in this section for the symmetric case, which can be retold for asymmetric relations via their symmetrizations. Let us start by observing that due to \REFL\ and \TRAN, every symmetric consequence relation $\vdash$ is a preorder on the set of finite multisets formulas and thus we can speak about $\vdash$\=/upsets, i.e., sets $T$ such that if $\Gamma\in T$ and $\Gamma\vdash \Delta$, then $\Delta\in T$. We say that a $\vdash$-upset $T$ is \emph{principal} if there is a finite multiset of formulas $\Gamma$ such that $T = \{\Delta\mid \Gamma\vdash\Delta\}$. 

Let us now define the notion of $\vdash$\=/\emph{theory} which will correspond to the notion of finitely generated theory in the Tarskian setting; we will refrain from defining a general notion of $\vdash$\=/\emph{theory} as it would be alien to our needs here. 

\begin{definition}\label{teoria}
Let\/ $\vdash$ be a symmetric consequence relation. A $\vdash$\=/\emph{theory} is a \emph{principal} $\vdash$\=/upset on the $\FMPow(\form)$ formulas, i.e., a set $\mathrm{Th}_{\vdash}(\Gamma) = \{\Delta\mid \Gamma\vdash\Delta\}$ for some finite multiset $\Gamma$. Let us by $\mathrm{Th}(\vdash)$ denote the set of all $\vdash$\=/theories respectively.
\end{definition}


We will denote theories by uppercase Latin letters $T$, $S$, $R$, etc. Also if  $\vdash$ is a  consequence relation, we speak about $\vdash$-theories instead of $\vdash^s$-theories and analogously for all upcoming notion in this section.

Obviously, $\mathrm{Th}_{\vdash}(\Gamma)$ is the least theory containing $\Gamma$ (this is an easy exercise left to the reader).\footnote{Information about theories in the Tarskian setting can be found in any standard book on algebraic logic, e.g., \cite{Font, CN}.} Furthermore, our notion of theory intuitively captures the idea that theories are deductively closed; it's just that the objects of deductive interest are themselves \emph{multisets} of formulas, and not just formulas, unlike in the standard setting. Despite this `type-lifting', though, as we'll see in the next pair of propositions, our notion of theory has a desirable property from the Tarskian setting, and collapses down to it when we build in the properties distinguishing relevant from Tarskian consequence relations. Consider the two bulleted conditions above.

\begin{proposition}
Let\/ $\vdash$ be a  symmetric consequence relation. Then for each finite multisets of formulas $\Gamma$ and $\Delta$ we have:
$$
\Gamma\vdash \Delta \qquad \text{iff}\qquad \Delta \in \bigcap \{T\in  \mathrm{Th}(\vdash) \mid \Gamma\in T\} \qquad \text{iff}\qquad \mathrm{Th}_\vdash(\Delta) \subseteq \mathrm{Th}_\vdash(\Gamma).
$$
\end{proposition}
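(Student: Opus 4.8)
The statement is a triple equivalence, so the plan is to establish a cycle of implications. The three conditions are: (a) $\Gamma \vdash \Delta$; (b) $\Delta \in \bigcap\{T \in \mathrm{Th}(\vdash) \mid \Gamma \in T\}$; (c) $\mathrm{Th}_\vdash(\Delta) \subseteq \mathrm{Th}_\vdash(\Gamma)$. The natural route is (a) $\Rightarrow$ (c) $\Rightarrow$ (b) $\Rightarrow$ (a), exploiting the definition $\mathrm{Th}_\vdash(\Gamma) = \{\Lambda \mid \Gamma \vdash \Lambda\}$ together with \REFL\ and \TRAN, which (as noted in the text) make $\vdash$ a preorder on finite multisets.

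First I would prove (a) $\Rightarrow$ (c): assume $\Gamma \vdash \Delta$; to show $\mathrm{Th}_\vdash(\Delta) \subseteq \mathrm{Th}_\vdash(\Gamma)$, take any $\Lambda \in \mathrm{Th}_\vdash(\Delta)$, i.e.\ $\Delta \vdash \Lambda$, and apply \TRAN\ to $\Gamma \vdash \Delta$ and $\Delta \vdash \Lambda$ to get $\Gamma \vdash \Lambda$, hence $\Lambda \in \mathrm{Th}_\vdash(\Gamma)$. Next, (c) $\Rightarrow$ (b): assume $\mathrm{Th}_\vdash(\Delta) \subseteq \mathrm{Th}_\vdash(\Gamma)$. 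Since $\mathrm{Th}_\vdash(\Delta)$ is a $\vdash$-theory (it is principal by construction) and $\Delta \in \mathrm{Th}_\vdash(\Delta)$ by \REFL, while the assumed inclusion gives $\Gamma \in \mathrm{Th}_\vdash(\Gamma)$... more precisely, I need $\Gamma \in \mathrm{Th}_\vdash(\Delta)$: this follows because $\Gamma \in \mathrm{Th}_\vdash(\Gamma)$ by \REFL, but that's the wrong theory. Let me instead reason directly toward (b): I must show $\Delta$ lies in every $\vdash$-theory $T$ containing $\Gamma$. So fix such a $T = \mathrm{Th}_\vdash(\Sigma)$ with $\Gamma \in T$, i.e.\ $\Sigma \vdash \Gamma$. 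From $\mathrm{Th}_\vdash(\Delta) \subseteq \mathrm{Th}_\vdash(\Gamma)$ together with the observation (the "easy exercise" in the text) that $\mathrm{Th}_\vdash(\Gamma)$ is the least theory containing $\Gamma$, and that $\Sigma \vdash \Gamma$ means $\mathrm{Th}_\vdash(\Gamma) \subseteq \mathrm{Th}_\vdash(\Sigma) = T$ — which itself is just the (a) $\Rightarrow$ (c) direction applied to $\Sigma \vdash \Gamma$ — we get $\mathrm{Th}_\vdash(\Delta) \subseteq T$, and since $\Delta \in \mathrm{Th}_\vdash(\Delta)$ by \REFL, we conclude $\Delta \in T$. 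Finally (b) $\Rightarrow$ (a): since $\mathrm{Th}_\vdash(\Gamma)$ is itself a $\vdash$-theory containing $\Gamma$ (again by \REFL), condition (b) forces $\Delta \in \mathrm{Th}_\vdash(\Gamma)$, which is precisely $\Gamma \vdash \Delta$.

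The argument is essentially a routine unwinding of definitions plus the preorder laws, so there is no serious obstacle; the only mild subtlety — worth stating cleanly once as a preliminary — is the auxiliary fact that $\mathrm{Th}_\vdash(\Gamma)$ is the least $\vdash$-theory containing $\Gamma$, equivalently that $\Sigma \vdash \Gamma$ iff $\mathrm{Th}_\vdash(\Gamma) \subseteq \mathrm{Th}_\vdash(\Sigma)$ iff $\Gamma \in \mathrm{Th}_\vdash(\Sigma)$. Once that lemma is in hand, all three implications are one or two applications of \REFL\ and \TRAN. I would present the preliminary observation first, then the cycle of implications compactly.
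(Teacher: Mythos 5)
Your proof is correct and uses the same ingredients as the paper's: \REFL, \TRAN, and the observation that $\Gamma\in\mathrm{Th}_\vdash(\Sigma)$ just means $\Sigma\vdash\Gamma$. The only difference is cosmetic --- you close the equivalence as a cycle (a)\,$\Rightarrow$\,(c)\,$\Rightarrow$\,(b)\,$\Rightarrow$\,(a) whereas the paper chains (a)\,$\Rightarrow$\,(b)\,$\Rightarrow$\,(c) and back --- so this is essentially the paper's argument.
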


\begin{proof}
Assume that $\Gamma\vdash \Delta$. If  $\Gamma\in T=\mathrm{Th}_{\vdash}(\Theta)$, we have that $\Theta\vdash \Gamma$ and by \ \TRAN \ it follows that $\Theta\vdash \Delta$, which means that $\Delta\in T$,  so $\Delta \in \bigcap \{T\in  \mathrm{Th}(\vdash) \mid \Gamma\in T\}$. Suppose further that $\Delta\vdash \Psi$, then, assuming that $\Delta \in \bigcap \{T\in  \mathrm{Th}(\vdash) \mid \Gamma\in T\}$, we have that $\Delta \in \mathrm{Th}_{\vdash}(\Gamma)$  (appealing to \REFL). Thus,  $\mathrm{Th}_\vdash(\Delta) \subseteq \mathrm{Th}_\vdash(\Gamma)$.

For the other direction of the sequence of implications, assume that $\mathrm{Th}_\vdash(\Delta) \subseteq \mathrm{Th}_\vdash(\Gamma)$.  If  $\Gamma\in T=\mathrm{Th}_{\vdash}(\Theta)$, by \ \REFL \ and our hypothesis, $\Gamma\vdash \Delta$, so by \ \TRAN\ it follows that  $\Delta\in T=\mathrm{Th}_{\vdash}(\Theta)$, and hence, $\Delta \in \bigcap \{T\in  \mathrm{Th}(\vdash) \mid \Gamma\in T\}$. Now if $\Delta \in \bigcap \{T\in  \mathrm{Th}(\vdash) \mid \Gamma\in T\}$, it follows that $\Gamma\vdash \Delta$, using \REFL.
\end{proof}

Recall that if $\vdash$ is monotone and contractive, we can see it as a binary relation on finite sets of formulas (instead of finite multisets of formulas), so theories can be seen as sets of finite sets of formulas and we can formulate and prove the following claim, roughly saying that the usual Tarskian theory is the union of our theory.

\begin{proposition}
Let\/ $\vdash$ be a monotone and contractive symmetric consequence relation. Then for each $\vdash$\=/\emph{theory} $T$ we have:
$$
T = \{\Gamma \mid \Gamma\subseteq \textstyle\bigcup T\}.
$$
\end{proposition}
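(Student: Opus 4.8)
The plan is to prove the two inclusions of the set identity $T = \{\Gamma \mid \Gamma\subseteq \bigcup T\}$ separately, where $T = \mathrm{Th}_{\vdash}(\Theta)$ for some finite $\Theta$ (and where, by the monotonicity and contractivity of $\vdash$, all multisets are treated as ordinary sets and $\bigcup T$ is the union of the finite sets belonging to $T$). The left-to-right inclusion is the easy direction: if $\Gamma\in T$, then $\Theta\vdash\Gamma$, and I want to show every element of $\Gamma$ lies in $\bigcup T$; it suffices to show each singleton $\{\gamma\}$ with $\gamma\in\Gamma$ belongs to $T$. This follows because $\Gamma\vdash\{\gamma\}$ — indeed $\{\gamma\}\vdash\{\gamma\}$ by \REFL, hence $\Gamma\vdash\{\gamma\}$ by \MONO\ (in the set setting, $\{\gamma\}\subseteq\Gamma$) — and then \TRAN\ with $\Theta\vdash\Gamma$ gives $\Theta\vdash\{\gamma\}$, i.e.\ $\{\gamma\}\in T$. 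So $\gamma\in\bigcup T$ for each $\gamma\in\Gamma$, establishing $\Gamma\subseteq\bigcup T$.

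For the right-to-left inclusion, suppose $\Gamma\subseteq\bigcup T$; since we are in the set setting, $\Gamma = \{\gamma_1,\dots,\gamma_k\}$ is a finite set, and for each $j$ there is some $\Delta_j\in T$ with $\gamma_j\in\Delta_j$. Because $\Delta_j\in T$ means $\Theta\vdash\Delta_j$, I first want to peel off the single formula $\gamma_j$: using the symmetric analogue of \THMREM\ or, more directly, using \MONO\ and the fact that the $\vdash^s$-construction has right-side weakening failing but right-side projection available — concretely, from $\Theta\vdash\Delta_j$ and $\Delta_j\vdash\{\gamma_j\}$ (which holds by \REFL\ plus \MONO, since $\{\gamma_j\}\subseteq\Delta_j$ as sets), \TRAN\ yields $\Theta\vdash\{\gamma_j\}$. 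Now I need to assemble the singletons into $\Gamma$: since $\Theta\vdash\{\gamma_j\}$ for each $j$, I apply \COMP\ and \TRAN\ repeatedly in the style of part (ii) of Proposition~\ref{p:OnSymmetrization} — from $\Theta\vdash\{\gamma_1\}$ and $\Theta\vdash\{\gamma_2\}$, \COMP\ gives $\{\gamma_1\},\Theta\vdash\{\gamma_1,\gamma_2\}$ (here using that $\Theta\uplus\{\gamma_1\}$ collapses to $\Theta\cup\{\gamma_1\}$ in the set setting, and one needs $\gamma_1\in\bigcup T$ arguments or just the set-theoretic absorption), and iterating, together with \CONT\ on the left to absorb repeated copies of the $\gamma_j$'s and of $\Theta$, yields $\Theta\vdash\Gamma$, i.e.\ $\Gamma\in T$.

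The main obstacle I anticipate is the bookkeeping in the assembly step: getting from "$\Theta\vdash\{\gamma_j\}$ for each $j$" to "$\Theta\vdash\Gamma$" requires carefully using \COMP\ to introduce each $\gamma_j$ on both sides and then \CONT\ (on both sides, as the relation is contractive) to discard the extraneous copies that \COMP\ forces onto the premise side, and also to discard the copy of $\Theta$ that accumulates. In the monotone-and-contractive set setting this is exactly the standard argument that $\vdash$-theories are closed under finite conjunction of their members, so it should go through cleanly; but I would be careful to invoke \MONO\ and \CONT\ explicitly at each point rather than silently identifying multisets with sets, since the whole point of the framework is that these identifications are only licensed under precisely those two conditions. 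A cleaner alternative for the assembly would be to note that $(\vdash^a)^s = \vdash$ here (part (iii) of Proposition~\ref{p:OnSymmetrization}), so $\Theta\vdash\Gamma$ reduces to showing $\Theta\vdash^a\gamma$ for every $\gamma\in\Gamma$, which is exactly what the singleton extraction already gave; I would likely present it this way to avoid the iterated \COMP/\CONT\ dance entirely.
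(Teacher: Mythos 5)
Your proof is correct and takes essentially the same route as the paper's: cover $\Gamma$ by members of $T$, project down with \REFL{} and \MONO, and reassemble via \COMP, \TRAN{} and \CONT{} (your shortcut through ${\vdash}=(\vdash^a)^s$ from Proposition~\ref{p:OnSymmetrization}(iii) just packages that same bookkeeping). The only remark is that the left-to-right inclusion needs no argument at all -- $\Gamma\in T$ gives $\Gamma\subseteq\bigcup T$ immediately from the definition of $\bigcup$ -- so your singleton detour there, while valid, is unnecessary.
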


\begin{proof} 
One inclusion is trivial, to show the latter assume that $T = \mathrm{Th}_{\vdash}(\Delta)$ and that $\Gamma\subseteq  \bigcup T$. As $\Gamma$ is finite, there are $\Delta_1, ..., \Delta_n \in T$ such that $\Gamma \subseteq \Delta_1 \cup \dots \cup \Delta_n$. Due to contraction we know that that $\Delta \vdash \Delta_1 \cup ... \cup \Delta_n$ and due to \REFL \ and \MONO, we know $\Delta_1 \cup ... \cup \Delta_n \vdash \Gamma$, which implies that $\Gamma \in T$.  
\end{proof}




Now that we have a collection of objects, we would love to find an algebraic structure on them: similarly to in the Tarskian case, we can find a rather natural one. In particular, $\FMPow(\form)$ can be seen as the domain of an \emph{Abelian monoid} with a neutral element $\emptymultiset$ and addition $\uplus$: we'll denote this structure by $\alg{Fm}^\#$. Recall that given an Abelian monoid $\alg{A} =  \tuple{A, +, 0}$ and an order $\leq$ on $A$ we say that $\tuple{\alg{A}, \leq}$ is an \emph{ordered} Abelian monoid if $a\leq b$ implies $a+c \leq b+c$ ($+$ is monotone w.r.t. $\leq$). Let's verify that we can find such a monoid structure on the class of theories:

\begin{theorem}\label{DRandTheories}
Let\/ $\vdash$ be a symmetric consequence relation. Then the algebra $\alg{Th}_\vdash =  \tuple{\mathrm{Th}(\vdash), +_\vdash, 0_\vdash}$ where, $0_\vdash = \mathrm{Th}_\vdash(\emptymultiset)$ and
$$
\mathrm{Th}_\vdash(\Gamma) +_\vdash \mathrm{Th}_\vdash(\Delta) = \mathrm{Th}_\vdash(\Gamma \uplus \Delta).
$$
is an Abelian monoid and the mapping $\mathrm{Th}_\vdash\colon \alg{Fm}^\# \to \alg{Th}_\vdash$ is a surjective homomorphism.

Furthermore $\tuple{\alg{Th}_\vdash, \subseteq}$ is an ordered Abelian monoid and, finally, $\vdash$ is monotone iff\/ $\mathrm{Th}_\vdash$ is a monotone mapping from $\tuple{\alg{Fm}^\#, \le}$ into $\tuple{\alg{Th}_\vdash, \subseteq}$. 
\end{theorem}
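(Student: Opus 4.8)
The plan is to translate every claim into a statement about the preorder $\vdash$ on finite multisets, using the characterization established in the earlier proposition, namely that $\Gamma \vdash \Delta$ holds iff $\mathrm{Th}_\vdash(\Delta) \subseteq \mathrm{Th}_\vdash(\Gamma)$, so that $\mathrm{Th}_\vdash(\Gamma) = \mathrm{Th}_\vdash(\Delta)$ iff $\Gamma \vdash \Delta$ and $\Delta \vdash \Gamma$. Once this dictionary between $\subseteq$ and $\vdash$ is in place, the monoid identities are inherited from the (commutative, associative, $\emptymultiset$-unital) structure of $\uplus$ on $\FMPow(\form)$, and the ordered-monoid property together with the monotonicity equivalence become essentially one-line consequences of \COMP\ and \TRAN. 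The one step that needs genuine care — and the main obstacle — is checking that $+_\vdash$ is well defined, since it is specified in terms of arbitrary representatives $\Gamma, \Delta$ of the two theories.

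I would settle well-definedness first. Suppose $\mathrm{Th}_\vdash(\Gamma) = \mathrm{Th}_\vdash(\Gamma')$ and $\mathrm{Th}_\vdash(\Delta) = \mathrm{Th}_\vdash(\Delta')$; by the dictionary this means $\Gamma \vdash \Gamma'$, $\Gamma' \vdash \Gamma$, $\Delta \vdash \Delta'$, $\Delta' \vdash \Delta$. Applying \COMP\ to $\Gamma \vdash \Gamma'$ with context $\Delta$ gives $\Gamma \uplus \Delta \vdash \Gamma' \uplus \Delta$, and applying \COMP\ to $\Delta \vdash \Delta'$ with context $\Gamma'$ gives $\Gamma' \uplus \Delta \vdash \Gamma' \uplus \Delta'$; \TRAN\ then yields $\Gamma \uplus \Delta \vdash \Gamma' \uplus \Delta'$, and the symmetric argument gives the converse, so $\mathrm{Th}_\vdash(\Gamma \uplus \Delta) = \mathrm{Th}_\vdash(\Gamma' \uplus \Delta')$. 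This same pattern — two applications of \COMP\ followed by one of \TRAN\ — will be reused below.

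With that in hand, commutativity and associativity of $+_\vdash$ follow from those of $\uplus$ after passing to representatives, and $0_\vdash = \mathrm{Th}_\vdash(\emptymultiset)$ is neutral because $\Gamma \uplus \emptymultiset = \Gamma$; hence $\alg{Th}_\vdash$ is an Abelian monoid. That $\mathrm{Th}_\vdash$ is a homomorphism is immediate: $\mathrm{Th}_\vdash(\Gamma \uplus \Delta) = \mathrm{Th}_\vdash(\Gamma) +_\vdash \mathrm{Th}_\vdash(\Delta)$ and $\mathrm{Th}_\vdash(\emptymultiset) = 0_\vdash$ are the definitions, and it is surjective because every $\vdash$-theory is by definition of the form $\mathrm{Th}_\vdash(\Gamma)$ for some finite multiset $\Gamma$. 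For the ordered-monoid claim, if $\mathrm{Th}_\vdash(\Gamma_1) \subseteq \mathrm{Th}_\vdash(\Gamma_2)$, then $\Gamma_2 \vdash \Gamma_1$ by the dictionary, so \COMP\ gives $\Gamma_2 \uplus \Psi \vdash \Gamma_1 \uplus \Psi$, i.e.\ $\mathrm{Th}_\vdash(\Gamma_1) +_\vdash \mathrm{Th}_\vdash(\Psi) \subseteq \mathrm{Th}_\vdash(\Gamma_2) +_\vdash \mathrm{Th}_\vdash(\Psi)$, as required for monotonicity of $+_\vdash$ with respect to $\subseteq$.

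Finally, for the equivalence: if $\vdash$ is monotone and $\Gamma \le \Delta$, write $\Delta = \Gamma \uplus \Psi$; then $\Gamma \vdash \Gamma$ by \REFL\ and \MONO\ yields $\Delta \vdash \Gamma$, whence $\mathrm{Th}_\vdash(\Gamma) \subseteq \mathrm{Th}_\vdash(\Delta)$, so $\mathrm{Th}_\vdash$ is monotone. Conversely, assume $\mathrm{Th}_\vdash$ is monotone; since $\Gamma \le \Gamma \uplus \Psi$ we get $\mathrm{Th}_\vdash(\Gamma) \subseteq \mathrm{Th}_\vdash(\Gamma \uplus \Psi)$, hence $\Gamma \uplus \Psi \vdash \Gamma$ by the dictionary, and combining this with an arbitrary $\Gamma \vdash \Delta$ via \TRAN\ gives $\Gamma \uplus \Psi \vdash \Delta$, which is \MONO. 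This closes the argument; apart from the well-definedness check, every step is routine bookkeeping once the correspondence between $\subseteq$ on theories and $\vdash$ on multisets is available.
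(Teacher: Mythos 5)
Your proof is correct and follows essentially the same route as the paper's: well\-/definedness of $+_\vdash$ via two applications of \COMP\ and one of \TRAN, the monoid laws inherited from $\uplus$, the ordered\-/monoid property from \COMP, and the monotonicity equivalence from \MONO\ and \TRAN. The only cosmetic difference is that you systematically route every step through the correspondence $\Gamma\vdash\Delta$ iff $\mathrm{Th}_\vdash(\Delta)\subseteq\mathrm{Th}_\vdash(\Gamma)$ from the preceding proposition, whereas the paper unfolds membership $\Psi\in\mathrm{Th}_\vdash(\cdot)$ directly; the substance is identical.
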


\begin{proof}
Let us first check that $+_\vdash$ is well-defined. To this end, consider multisets of formulas $\Gamma,\Gamma',\Delta,\Delta'$ such that $\mathrm{Th}_\vdash(\Gamma)=\mathrm{Th}_\vdash(\Gamma')$ and $\mathrm{Th}_\vdash(\Delta)=\mathrm{Th}_\vdash(\Delta')$. In particular, $\Gamma'\vdash \Gamma$ and $\Delta'\vdash \Delta$ and so by Compatibility and Transitivity $\Gamma'\uplus\Delta'\vdash \Gamma\uplus\Delta$, whence $\mathrm{Th}_\vdash(\Gamma\uplus\Delta)\subseteq\mathrm{Th}_\vdash(\Gamma'\uplus\Delta')$. The other inclusion is proved analogously. 

With this fact about $+_\vdash$, it is easy to see  that $\tuple{\mathrm{Th}(\vdash), +_\vdash, 0_\vdash}$ is an Abelian monoid. Commutativity and associativity of $+_\vdash$ are trivial by the corresponding properties of $\uplus$ as a multiset operation. Similarly,  $0= \mathrm{Th}_\vdash(\emptymultiset)$ is an identity (i.e., $\mathrm{Th}_\vdash(\Gamma\uplus\emptymultiset)= \mathrm{Th}_\vdash(\Gamma)$) since by properties of multisets, $\Gamma\uplus\emptymultiset = \Gamma$. The fact that the mapping $\mathrm{Th}_\vdash\colon \alg{Fm}^\# \to \alg{Th}_\vdash$ is a surjective homomorphism is similarly easy to see.

Let us check now that $\tuple{\alg{Th}_\vdash, \subseteq}$ is an ordered Abelian monoid. Assume that $\mathrm{Th}_\vdash(\Gamma) \subseteq \mathrm{Th}_\vdash(\Delta)$. Suppose further that $\Psi \in \mathrm{Th}_\vdash(\Gamma\uplus \Theta)$, so $\Gamma\uplus \Theta \vdash \Psi$. By \ \REFL  \ and our hypothesis, $\Gamma \vdash \Delta$. Now, by \ \COMP \/, $\Gamma \uplus \Theta\vdash \Delta \uplus \Theta$.  Hence, $\Delta \uplus \Theta \vdash \Psi$ (i.e., $\Psi \in \mathrm{Th}_\vdash(\Delta \uplus \Theta)$) as desired by \TRAN.

Suppose now that $\vdash$ is monotone. We want to show that $\mathrm{Th}_\vdash$ is a monotone mapping. Assume that $\Gamma \le \Delta$, so if $\Psi \in \mathrm{Th}_\vdash(\Gamma)$, we have that $\Gamma \vdash \Psi$ and by \MONO,  we have that $\Gamma \uplus  (\Delta \Msetminus \Gamma) \vdash \Psi$, so $\Psi \in \mathrm{Th}_\vdash(\Delta)$ as desired. On the other hand, suppose that  $\mathrm{Th}_\vdash$ is a monotone mapping. Assume further that $\Gamma \vdash \Psi$, so  $\Psi \in \mathrm{Th}_\vdash(\Gamma)$, which implies that $\Psi \in \mathrm{Th}_\vdash(\Gamma \uplus \Phi)$ for any $\Phi$ by monotonicity of the mapping $\mathrm{Th}_\vdash$ and the fact that $\Gamma \le \Gamma \uplus \Phi$. This gives us the monotonicity of $\vdash$.
\end{proof}

Similarly, it's desirable that we be able to define a congruence on $\alg{Fm}^\#$ using $\vdash$, which has other nice properties. This is straightforward: consider a symmetric consequence relation $\vdash$ and define a binary relation $\dashv\vdash$ of finite multisets of formulas as
$$
\Gamma\dashv\vdash \Delta \qquad\text{iff}\qquad \Gamma\vdash \Delta\ \text{ and }\ \Delta\vdash \Gamma
$$

\begin{theorem}
Let $\vdash$ be a symmetric consequence relation on $\form$. Then
\begin{itemize}
\item[(i)] the relation $\dashv\vdash$ is a congruence on $\alg{Fm}^\#$,
\item[(ii)] the tuple  $\tuple{\alg{Fm}^\#_\vdash, \leq_\vdash}$ is an ordered Abelian monoid, where $\alg{Fm}^\#_\vdash$ is the $\dashv\vdash$-quotient of $\alg{Fm}^\#$, and
$$
[\Gamma]_{\dashv\vdash} \leq_\vdash [\Delta]_{\dashv\vdash} \qquad \text{iff} \qquad \Gamma \vdash  \Delta,
$$

\item[(iii)]
the mapping $\mathrm{Th}'_\vdash \colon \tuple{\alg{Fm}^\#_\vdash, \leq_\vdash} \to \tuple{\alg{Th}_\vdash, \subseteq}$ defined as
$$
\mathrm{Th}'_\vdash([\Gamma]_{\dashv\vdash}) = \mathrm{Th}_\vdash(\Gamma)
$$
is an antitone surjective homomorphism.
\end{itemize}
\end{theorem}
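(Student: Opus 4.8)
The plan is to dispatch the three parts in order, with almost all of the work being routine checks of well-definedness that reuse machinery already set up for $\alg{Th}_\vdash$ in Theorem~\ref{DRandTheories} and the earlier proposition of this section asserting that $\Gamma \vdash \Delta$ iff $\mathrm{Th}_\vdash(\Delta) \subseteq \mathrm{Th}_\vdash(\Gamma)$. For (i), I would first observe that $\dashv\vdash$ is an equivalence relation: reflexivity comes from \REFL\ (in the form $\Gamma \vdash \Gamma$ for arbitrary finite multisets), symmetry is built into the shape of the definition, and transitivity from two applications of \TRAN\ (one in each direction). Compatibility with $\uplus$ is exactly the computation used to show $+_\vdash$ well-defined in Theorem~\ref{DRandTheories}: from $\Gamma \dashv\vdash \Gamma'$ and $\Delta \dashv\vdash \Delta'$, apply \COMP\ to $\Gamma \vdash \Gamma'$ with context $\Delta$ and to $\Delta \vdash \Delta'$ with context $\Gamma'$, then chain with \TRAN\ to get $\Gamma \uplus \Delta \vdash \Gamma' \uplus \Delta'$; the reverse entailment is obtained symmetrically, so $\Gamma \uplus \Delta \dashv\vdash \Gamma' \uplus \Delta'$. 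Hence $\dashv\vdash$ is a congruence on $\alg{Fm}^\#$.

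For (ii), since $\alg{Fm}^\#$ is an Abelian monoid and $\dashv\vdash$ is a congruence by (i), the quotient $\alg{Fm}^\#_\vdash$ is an Abelian monoid automatically. I would then verify that $\leq_\vdash$ is well-defined on classes: if $\Gamma \dashv\vdash \Gamma'$, $\Delta \dashv\vdash \Delta'$ and $\Gamma \vdash \Delta$, then $\Gamma' \vdash \Gamma \vdash \Delta \vdash \Delta'$ and \TRAN\ gives $\Gamma' \vdash \Delta'$. It is a preorder by \REFL\ and \TRAN, and it is antisymmetric because $\Gamma \vdash \Delta$ together with $\Delta \vdash \Gamma$ is precisely $\Gamma \dashv\vdash \Delta$, i.e. $[\Gamma]_{\dashv\vdash} = [\Delta]_{\dashv\vdash}$; so $\leq_\vdash$ is a partial order. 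Compatibility of $+$ with $\leq_\vdash$ is a single instance of \COMP: $\Gamma \vdash \Delta$ yields $\Gamma \uplus \Psi \vdash \Delta \uplus \Psi$, i.e. $[\Gamma]_{\dashv\vdash} + [\Psi]_{\dashv\vdash} \leq_\vdash [\Delta]_{\dashv\vdash} + [\Psi]_{\dashv\vdash}$. This establishes the ordered Abelian monoid.

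For (iii), the earlier proposition gives $\Gamma \dashv\vdash \Delta$ iff $\mathrm{Th}_\vdash(\Gamma) = \mathrm{Th}_\vdash(\Delta)$, which simultaneously makes $\mathrm{Th}'_\vdash$ well-defined and injective. It is a monoid homomorphism since $\mathrm{Th}'_\vdash([\Gamma]_{\dashv\vdash} + [\Delta]_{\dashv\vdash}) = \mathrm{Th}_\vdash(\Gamma \uplus \Delta) = \mathrm{Th}_\vdash(\Gamma) +_\vdash \mathrm{Th}_\vdash(\Delta)$ by the definition of $+_\vdash$, and it sends the neutral class $[\emptymultiset]_{\dashv\vdash}$ to $0_\vdash = \mathrm{Th}_\vdash(\emptymultiset)$. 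Surjectivity is immediate because every $\vdash$-theory is $\mathrm{Th}_\vdash(\Gamma)$ for some finite multiset $\Gamma$. Antitonicity is once more the earlier proposition: $[\Gamma]_{\dashv\vdash} \leq_\vdash [\Delta]_{\dashv\vdash}$ says $\Gamma \vdash \Delta$, equivalently $\mathrm{Th}_\vdash(\Delta) \subseteq \mathrm{Th}_\vdash(\Gamma)$, i.e. $\mathrm{Th}'_\vdash([\Delta]_{\dashv\vdash}) \subseteq \mathrm{Th}'_\vdash([\Gamma]_{\dashv\vdash})$.

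The honest assessment of the main obstacle is that there is none of real mathematical depth: everything reduces to bookkeeping with \REFL, \TRAN, and \COMP, together with the definitions already in place. The one point worth stating carefully is that the single congruence $\dashv\vdash$ from (i) is exactly what makes $\leq_\vdash$ descend to the quotient in (ii) and what makes $\mathrm{Th}'_\vdash$ well-defined in (iii); combined with the injectivity and surjectivity of $\mathrm{Th}'_\vdash$, one in fact obtains an antitone isomorphism of ordered Abelian monoids, although only the stated weaker assertion needs to be recorded.
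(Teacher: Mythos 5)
Your proposal is correct and follows essentially the same route as the paper's proof: equivalence plus \COMP/\TRAN\ for the congruence, \COMP\ for order-compatibility on the quotient, and the order-reversal between $\vdash$ and theory inclusion for antitonicity. The only differences are cosmetic — you supply a few checks the paper leaves implicit (well-definedness of $\leq_\vdash$, injectivity of $\mathrm{Th}'_\vdash$) and route the antitonicity argument through the earlier proposition relating $\Gamma\vdash\Delta$ to $\mathrm{Th}_\vdash(\Delta)\subseteq\mathrm{Th}_\vdash(\Gamma)$ rather than arguing directly with \TRAN.
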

\begin{proof}
 (i): By \ \REFL \/ and \TRAN, the relation  $\dashv\vdash$ is an equivalence. Suppose that now $\Gamma \dashv\vdash \Gamma'$ and $\Delta \dashv\vdash \Delta'$. We wish to show then that $\Gamma \uplus \Delta \dashv\vdash \Gamma' \uplus \Delta'$. By \COMP, we have that $\Gamma \uplus \Delta \vdash \Gamma' \uplus \Delta$ and $\Gamma' \uplus \Delta \vdash \Gamma' \uplus\Delta'$, so by \ \TRAN \/ we have that  $\Gamma \uplus \Delta \vdash \Gamma' \uplus\Delta'$. Analogously, $  \Gamma' \uplus\Delta' \vdash \Gamma \uplus \Delta$.

(ii): It is not difficult to see that $\tuple{\alg{Fm}^\#_\vdash, \leq_\vdash}$ is an Abelian monoid. We will show then that   $[\Gamma]_{\dashv\vdash}\leq_\vdash [\Delta]_{\dashv\vdash}$ implies that $[\Gamma \uplus \Theta]_{\dashv\vdash} \leq_\vdash [\Delta \uplus \Theta]_{\dashv\vdash}$. This simply follows as a consequence of \COMP \/ and the definition of the order.

(iii):We will leave checking that the mapping is a surjective homomorphism as 
an exercise for the reader. Finally, let us establish the monotonicity property. Suppose that $[\Gamma]_{\dashv\vdash}\leq_\vdash [\Delta]_{\dashv\vdash}$. We want to show that $\mathrm{Th}_\vdash(\Delta)\subseteq \mathrm{Th}_\vdash(\Gamma) $. If $\Psi \in \mathrm{Th}_\vdash(\Delta)$, then $\Delta\vdash \Psi$ but since $\Gamma \vdash  \Delta$, by \ \TRAN \/, we have that $\Gamma\vdash \Psi$, so $\Psi \in \mathrm{Th}_\vdash(\Gamma)$.
\end{proof}


\section{Conclusion}

In this paper we have begun to investigate external consequence relations capturing some of the properties sought in relevant logics, particularly that the premises of a consecution ought to be \emph{used} in obtaining the conclusion. Starting from some intuitive desiderata for how such relations ought to behave, we have fixed plausible definitions for asymmetric and symmetric consequence relations, and proved a number of properties of these indicating that they have interesting properties, and allow for a finer grained analysis of logical consequence than the usual, Tarskian, paradigm allows. We've considered relevant consequence relations in abstract terms, with appeal to concrete presentations of logics, taking some relevant and substructural logics as our paradigm examples, and have pinned down a natural definition of theory appropriate to our aims. This work builds on previous work on liberalizing the standard definition of consequence in ways appropriate to the motivations of various non-classical logics, and adapts it to the relevant case. 

This paper is not the final word on the topics we've investigated, and there remain a number of promising avenues for future work. Hopefully by the cogency of our definitions and theorems, we will have convinced the reader that there is a rich seam of logical work to be done here, which has clear and compelling philosophical motivations. In the blunt words of \cite{Weinersmith}: This paper is not meant to be comprehensive or conclusive, but only a first step, taken in order to claim priority after someone else does the hard work.

\section*{Acknowledgements}

Libor B\v ehounek and Petr Cintula were supported by the Czech Science Foundation project GA22-01137S. Petr Cintula was also supported by RVO 67985807. Andrew Tedder was supported by fellowship funding from the Alexander von Humboldt foundation. Guillermo Badia is supported by  the Australian Research Council grant DE220100544.

\end{document}